\documentclass[]{article}
\usepackage[inner=4cm, outer=2cm, top=3cm, bottom=2cm,]{geometry}
\usepackage{amsmath, amsfonts, amssymb, hyperref, color, comment, longtable, amsthm, breqn, longtable, microtype, xfrac, comment, mathtools, multicol, clipboard, thmtools, mleftright}
\usepackage{scalerel}
\usepackage{stackengine,wasysym}
\makeatletter
\newcommand{\altqedhere}{%
  \ifmeasuring@\else\sbox0{\popQED}\fi
  \tag*{\qedsymbol}%
}
\makeatother
\newcommand\reallywidetilde[1]{\ThisStyle{%
  \setbox0=\hbox{$\SavedStyle#1$}%
  \stackengine{-.1\LMpt}{$\SavedStyle#1$}{%
    \stretchto{\scaleto{\SavedStyle\mkern.2mu\AC}{.4\wd0}}{.7\ht0}%
  }{O}{c}{F}{T}{S}%
}}
\makeatletter
\newcommand{\leqnomode}{\tagsleft@true}
\newcommand{\reqnomode}{\tagsleft@false}
\makeatother
\reqnomode
\usepackage[noabbrev, nameinlink]{cleveref}
\creflabelformat{equation}{#2\textup{#1}#3}
\newtheorem{theorem}{Theorem}[section]
\AfterEndEnvironment{theorem}{\noindent\ignorespaces}
\newtheorem{lemma}[theorem]{Lemma}
\newtheorem{proposition}[theorem]{Proposition}
\theoremstyle{definition}
\newtheorem{definition}[theorem]{Definition}
\AfterEndEnvironment{definition}{\noindent\ignorespaces}
\newtheorem{remark}[theorem]{Remark}

\usepackage[style=alphabetic]{biblatex}
\newcommand\numberthis{\addtocounter{equation}{1}\tag{\theequation}}
\allowdisplaybreaks
\begin{document}
\title{Explicit height bounds on modular polynomials for the elliptic $j$-invariant, cube root of $j$, and Weber modular function $\mathfrak f$}
\author{Abraham \textsc{Zhang}}
\maketitle
\begin{abstract}
We prove an improved explicit upper bound on the modular polynomial $\Phi_N$. We also prove new explicit upper bounds for the modular polynomials for the cube root of $j$ and the Weber modular function $\mathfrak f$.
\end{abstract}
\section{Introduction}
The classical modular polynomials $\Phi_N(X,Y)\in\mathbb Z[X,Y]$ are an important tool in modern cryptography. For example, they are used in isogeny based cryptography and efficient point-counting on elliptic curves (needed for elliptic curve cryptography). This is due to $\Phi_N$ vanishing at pairs of $j$-invariants of elliptic curves linked by cyclic $N$-isogeny.\par
$\Phi_N$ have rapidly growing (with $N$) and large coefficients. To compute the $\Phi_N$ one needs good bounds on the size of their coefficients. So we introduce the following definition:
\begin{definition}The logarithmic height of a complex polynomial $P$ is $h(P):=\allowbreak\ln\max_{c\text{ coefficient of }P}|c|$.
\end{definition}Another reason for computing height bounds is that with them we can estimate how large of an $N$ we can handle when working with $\Phi_N$.\par
In this paper we focus on computing height bounds of the modular polynomials $\Phi_{\gamma_2,N}$ and $\Phi_{\mathfrak f,N}$. Here $\gamma_2$ is the cube root of the $j$-invariant and $\mathfrak f$ is the Weber modular function. These polynomials have the advantage over the classical case in that their coefficients are sparser and smaller. The polynomial $\Phi_{\gamma_2,N}$ ($\Phi_{\mathfrak f, N}$ respectively) is sparser than $\Phi_N$ by a factor of about 3 (24 respectively) \cite[Section 7]{BrSu102}. For the heights, \cite{BGP23} proves
\begin{theorem}\label{old}For all integers $N\ge1$, the height of the modular polynomial $\Phi_N$ is bounded by
$$h(\Phi_N)\le 6\psi(N)(\ln N-2\lambda_N+9.5387).$$
\end{theorem}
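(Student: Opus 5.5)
This bound is cited from \cite{BGP23}. If I were to prove it myself, I would follow the standard strategy of bounding the Mahler measure of $\Phi_N(X,j(z))$ pointwise in $z$, then integrating over a fundamental domain, and finally passing from Mahler measure to coefficient height. Concretely, $\Phi_N(X,Y)=\prod_{\gamma\in\Gamma_0(N)\backslash \mathrm{SL}_2(\mathbb Z)}(X-j(\gamma_N z))$ as a polynomial in $X$ with $Y=j(z)$. Here the $\psi(N)$ roots are $j\bigl(\frac{az+b}{d}\bigr)$ with $ad=N$, $0\le b<d$, $\gcd(a,b,d)=1$. Thus $\Phi_N$ has degree $\psi(N)$ in each variable.

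\textbf{Step 1: bound $|j(\tau)|$.} I would use the $q$-expansion estimate $|j(\tau)|\le e^{2\pi \operatorname{Im}\tau}+c_0$, with an explicit constant $c_0\approx 2079$, valid for $\tau$ in the standard fundamental domain. Moving each $\frac{az+b}{d}$ into the fundamental domain gives the estimate
\[
\ln\max(1,|j(\gamma z)|)\le 2\pi\max\bigl(\operatorname{Im}\gamma z,\ \ldots\bigr)+O(1).
\]
\textbf{Step 2: average over the roots.} Summing $\ln\max(1,|\text{root}|)$ over all $\psi(N)$ roots, for $z$ with imaginary part about $1$, reduces to lattice-point sums of the form $\sum_{ad=N}\sum_b \max\bigl(\tfrac{a}{d}, \tfrac{1}{\text{appropriate height}}\bigr)$. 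These sums produce $\psi(N)\bigl(\tfrac{\ln N}{\ldots}-2\lambda_N+C\bigr)$, where $\lambda_N=\sum_{p\mid N}\frac{\ln p}{p+1}$ comes from the Euler product for $\psi$. This reproduces the usual asymptotic $6\psi(N)\ln N$, which I would match with Cohen's result. This is the main obstacle: one needs a sharp explicit evaluation of $\sum_{d\mid N}\frac{\phi(\gcd(d,N/d))}{\gcd(d,N/d)}\cdot\frac{N}{d^2}\ln(\ldots)$ and related terms, with every error term kept explicit.

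\textbf{Step 3: bound the Mahler measure.} Taking $z$ with $j(z)$ on a circle, or integrating $\ln|\Phi_N(X,j(z))|$ via Jensen's formula, gives a bound on the two-variable Mahler measure $m(\Phi_N)$. Using $h(P)\le m(P)+\deg_X\ln2+\deg_Y\ln 2$, i.e.\ adding $2\psi(N)\ln 2$ for the binomial coefficient loss, then gives the result. The constant $9.5387$ absorbs $c_0$, the $\ln 2$ terms, and the explicit error from Step 2; the factor $6$ arises as $\frac{2\pi}{\ldots}$ times the averaged $\frac{3}{\pi}\ln N$.

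Since the result is cited from \cite{BGP23}, in this paper I would simply invoke it rather than reprove it.
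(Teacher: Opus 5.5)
Your conclusion matches the paper: \autoref{old} is not proved here. It is quoted from \cite{BGP23} and used only as the benchmark that \autoref{improve} sharpens, by $1.8459$, obtained by replacing \cite[Lemma 3.3]{BGP23} with \autoref{big d}. As a citation, your proposal is fine.

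The self-contained sketch you attach, however, is not a proof of this inequality, and it is not the route of \cite{BGP23}.

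\textbf{The gap in your sketch.} You bound $\ln\max(1,|j|)$ by $2\pi\operatorname{Im}\widetilde{\tau_\sigma}+O(1)$. This puts both the main term $6\psi(N)\ln N$ and the correction $-12\psi(N)\lambda_N$ inside a \emph{linear} Farey lattice sum. You then call the explicit evaluation of that sum the ``main obstacle'' and leave it open. So nothing in the sketch yields $-2\lambda_N$ with explicit error terms, and the claim that $9.5387$ ``absorbs'' $c_0$ and the $\ln 2$'s is unsupported.

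\textbf{The route of \cite{BGP23}.} The paper reruns this argument in Section 4, and it sidesteps the obstacle. Write $\ln\max(1,|j_\sigma|)=\ln\max(|\Delta_\sigma|,|j_\sigma\Delta_\sigma|)-\ln|\Delta_\sigma|$. The $\mathrm{SL}_2(\mathbb Z)$-invariance of $|\Delta(\tau)|(\operatorname{Im}\tau)^6$ moves the first term to the fundamental domain, where it is bounded, at the cost of $6(\ln\operatorname{Im}\widetilde{\tau_\sigma}-\ln\operatorname{Im}\tau_\sigma)$. The Hecke identity $\bigl|\prod_{\sigma\in C_N}\Delta\circ\sigma\bigr|=|\Delta|^{\psi(N)}$ handles the second term. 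The main term then appears exactly:
\[
-6\sum_{\sigma}\ln\operatorname{Im}\tau_\sigma=6\sum_{\sigma}\ln\frac da-6\psi(N)\ln y=6\psi(N)(\ln N-2\lambda_N-\ln y).
\]
The only estimate still needed is the \emph{logarithmic} sum $\sum_\sigma\ln\operatorname{Im}\widetilde{\tau_\sigma}\le\psi(N)(C+\ln y)$. This comes from \autoref{farey}, \autoref{imt}, \autoref{big d} and \cite[Lemma 3.5]{BGP23}, and it is only $O(\psi(N))$.

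\textbf{Passage to the height.} In the paper's reworking, the height is reached via the interpolation lemma \cite[Lemma 20]{BrSu10}, at points with $j(\tau_k)\in[L,2L]$. Your two-variable Mahler measure step would also work, but it changes the bookkeeping behind the constant.

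\textbf{Definition of $\lambda_N$.} Your formula $\lambda_N=\sum_{p\mid N}\frac{\ln p}{p+1}$ is correct only for squarefree $N$. The identity above forces $\lambda_N=\sum_{p^e\parallel N}\frac{p^e-1}{p^{e-1}(p^2-1)}\ln p$; for example, $\lambda_{p^2}=\frac{\ln p}{p}$.
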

In this paper we prove the following 4 related theorems:
\begin{theorem}\label{gamma}For all integers $N\ge2$ coprime to $3$, the height of the modular polynomial $\Phi_{\gamma_2,N}$ is bounded by
$$
h ( \Phi_{\gamma_2,N}) <2 \psi( N ) ( \ln N-2 \lambda_{N}+\ln \ln N + 4.54).
$$
\end{theorem}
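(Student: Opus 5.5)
We follow the strategy of \cite{BGP23} for Theorem~\ref{old}, adapted to $\gamma_2$. For $N$ coprime to $3$, $\Phi_{\gamma_2,N}(X,\gamma_2(\tau))=\prod_{\gamma}\bigl(X-\gamma_2(\gamma\tau)\bigr)$ (up to the normalisation fixed by $3\nmid N$). Here $\gamma$ runs over the $\psi(N)$ matrices $\begin{pmatrix}a&b\\0&d\end{pmatrix}$ with $ad=N$, $0\le b<d$, $\gcd(a,b,d)=1$. The factors $\zeta_3^{\ast}$ from the transformation law of $\gamma_2$ are absorbed into the choice of representatives. The coefficients of $\Phi_{\gamma_2,N}$ in $X$ are elementary symmetric functions of the roots, so they are polynomials in $\gamma_2$ of degree at most $\psi(N)$.

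The plan has three steps.

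\textbf{Step 1: bound the symmetric functions pointwise.} Take $\tau$ on a circle, or on a small set of interpolation points, inside the standard fundamental domain. Bound the Mahler-type quantity $\sum_\gamma \ln\max(1,|\gamma_2(\gamma\tau)|)$. Use $|\gamma_2(\tau)|\le |q|^{-1/3}+c$ for $\operatorname{Im}\tau$ large, with an explicit constant $c$ obtained from $\gamma_2=E_4/\eta^8$. This gives
\[
\ln|\gamma_2(\gamma\tau)|\le \tfrac{2\pi}{3}\operatorname{Im}(\gamma\tau)+O(1)
\]
after reducing $\gamma\tau$ to the fundamental domain. Summing over $\gamma$ gives the main term. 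It is the analogue of the $\psi(N)(\ln N-2\lambda_N)$ term in \cite{BGP23}, but with the factor $\tfrac13$ coming from the exponent $q^{-1/3}$ instead of $q^{-1}$. This is the source of the overall constant $2\psi(N)$ rather than $6\psi(N)$.

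\textbf{Step 2: pass from values to coefficients in $Y$.} Each coefficient $c_k(Y)$ of $X^k$ has degree $\le\psi(N)$ in $Y$. Recover its coefficients by Cauchy's formula or Lagrange interpolation on a circle $|Y|=R$, parametrised by $\tau$ with $\gamma_2(\tau)$ on that circle. The cost is $\psi(N)\ln R$ plus a binomial factor from the elementary symmetric functions, of size at most $\psi(N)\ln 2$.

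\textbf{Step 3: optimise $R$.} We expect this to be the main obstacle, together with the explicit constants. Choosing $R$ large makes $\operatorname{Im}\tau\approx\frac{3}{2\pi}\ln R$. This inflates the contribution of the $\gamma$ with $a=N$, whose images $\gamma\tau$ have large imaginary part, and penalises $\psi(N)\ln R$ directly. Choosing $R$ small makes $\tau$ approach the elliptic point $\rho$, where $\gamma_2$ vanishes and the pointwise bounds degrade. The balance should be at $\ln R\asymp\ln\ln N$, i.e. $\operatorname{Im}\tau$ of order $\ln\ln N$ instead of a constant. This produces the new $\ln\ln N$ term, with the absolute constant $4.54$.

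For Step 3 we will first try an explicit choice such as $R=\ln N$, and bound $\sum_{\gamma}\max\bigl(0,\operatorname{Im}(\gamma\tau)-1\bigr)$ via the divisor-sum identity behind $\lambda_N=\sum_{p\mid N}\frac{\ln p}{p+1}$. Small $N$, where the asymptotic estimates are too weak, will be handled by direct computation of $\Phi_{\gamma_2,N}$.
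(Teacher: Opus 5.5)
\textbf{There is a genuine gap, located exactly where you place the $\ln\ln N$ term.}

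Several pieces of your plan do match the paper: the product formula, the factor $\frac13$ from $q^{-1/3}$ giving $2\psi(N)$, the extra $\psi(N)\ln 2$ binomial cost, and the direct computation for small $N$. The trouble is Step 3, because the trade-off you describe does not exist. For nodes with $|\gamma_2(\tau)|\approx R$, both the interpolation penalty and the growth of the pointwise bound in $\operatorname{Im}\tau$ have the form $\psi(N)F(R)$ with $F$ independent of $N$. Nothing degrades near $\rho$, since $\ln\max(1,|\gamma_2|)$ is harmless there; only the interpolation constant grows as $R\to0$. So the optimal $R$ is an absolute constant. The paper takes the real nodes $L(1+k/\psi(N))\in[L,2L]$ with $L=6$ on $\partial\mathcal F$, and pays $\psi(N)\bigl(\frac{\ln L+1}{L}+4\ln 2\bigr)$ via \cite[Lemma 20]{BrSu10}. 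Taking $R=\ln N$ merely adds about $\frac{2\pi}{3}\operatorname{Im}\tau\cdot\psi(N)\approx\psi(N)\ln\ln N$ on top of whatever Step 1 delivers. It does not explain the shape $\ln N-2\lambda_N+\ln\ln N$.

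\textbf{The real content is Step 1, and you do not supply it.} The main term cannot come from unreduced points. For $N=p$ prime and fixed $\tau$ with $p>y$, $\sum_\gamma\max(0,\operatorname{Im}(\gamma\tau)-1)=py-1$, whereas $S_{\gamma_2,p}(\tau)=2p\ln p+O(p)$. The $\ln p$ comes from the $p$ matrices with $d=p$, whose reductions $\widetilde{\gamma\tau}$ climb into the cusp. Controlling $\sum_\gamma\operatorname{Im}\widetilde{\gamma\tau}$ needs a Farey dissection, not a divisor-sum identity.

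The paper, following \cite{BrPa22}, avoids this with the controlling function $\eta^8$:
\begin{itemize}
\item Write $\ln\max(1,|\gamma_2|)=\ln\max(|\eta^8|,|\gamma_2\eta^8|)-\ln|\eta^8|$ at $\widetilde{\tau_\sigma}$, and bound the first term by $0.376$ on $\mathcal F$.
\item Use the $\mathrm{SL}_2(\mathbb Z)$-invariance of $|\eta^8(\tau)|y^2$, the identity $\prod_\sigma|\eta^8(\sigma\tau)|=|\eta^8(\tau)|^{\psi(N)}$, and $\sum_\sigma\ln\frac{d}{a}=\psi(N)(\ln N-2\lambda_N)$.
\item This yields the main term $2\psi(N)(\ln N-2\lambda_N)-\psi(N)\ln(|\eta^8(\tau)|y^2)$ exactly, plus a remainder $2\sum_\sigma\ln\operatorname{Im}\widetilde{\tau_\sigma}$.
\end{itemize}
The $\ln\ln N$ comes from this remainder. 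By concavity of $\ln$ and $y\le\frac{3}{2\pi}\ln(|\gamma_2(\tau)|+9.902)$ on $\mathcal F$, it is at most
\[
2\psi(N)\ln\Bigl(\frac{3}{2\pi\psi(N)}\bigl(S_{\gamma_2,N}(\tau)+\psi(N)\ln 10.902\bigr)\Bigr).
\]
This bound is self-referential, and since $S_{\gamma_2,N}/\psi(N)\approx 2\ln N$ it produces $2\psi(N)\ln\ln N$. The constant $4.54$ then comes from iterating it (the $c_n$, with $N_0=20$ and $n=2$), maximising over $\gamma_2(\tau)\in[6,12]$, and checking $N\le 20$ numerically.
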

\def\gammanew{
\begin{theorem}\label{gammanew}
For all integers $N\ge1$ coprime to $3$, the height of the modular polynomial $\Phi_{\gamma_2,N}$ is bounded by $$h(\Phi_{\gamma_2,N})<2\psi(N)(\ln N-2\lambda_N+9.06).$$
\end{theorem}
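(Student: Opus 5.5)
The most direct route is to compare $\Phi_{\gamma_2,N}$ with the classical polynomial $\Phi_N$, rather than rerun the whole analytic argument. Since $\gcd(N,3)=1$, the classical relation holds:
$$\Phi_N(X^3,Y^3)=\prod_{\zeta,\zeta'\in\mu_3}\Phi_{\gamma_2,N}(\zeta X,\zeta' Y),$$
possibly up to a sign or normalisation, which I would check. Moreover $\Phi_{\gamma_2,N}$ has bidegree $(\psi(N),\psi(N))$. The plan is therefore to bound $h(\Phi_{\gamma_2,N})$ in terms of the Mahler measure of $\Phi_N$. Theorem~\ref{old} then supplies the main term $6\psi(N)(\ln N-2\lambda_N+9.5387)$, and dividing by $3$ gives roughly $2\psi(N)(\ln N-2\lambda_N+\dots)$, which is the shape required.

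\textbf{Step 1: Mahler measures.} The Mahler measure $m$ is multiplicative and is invariant under $X\mapsto \zeta X$ and under $X\mapsto X^3$. Hence the displayed identity gives
$$9\,m(\Phi_{\gamma_2,N})=m(\Phi_N(X^3,Y^3))=m(\Phi_N),$$
and so $m(\Phi_{\gamma_2,N})=\tfrac19 m(\Phi_N)$.

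\textbf{Step 2: coefficient bounds in both directions.} For a two-variable polynomial of bidegree $(d,d)$, coefficients are bounded by binomial multiples of the Mahler measure, so $h(P)\le m(P)+2d\ln 2$. Conversely $m(P)\le h(P)+\ln\#\{\text{monomials}\}$.

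\textbf{Step 3: the loss of a factor.} Combining Steps 1 and 2 gives only $h(\Phi_{\gamma_2,N})\lesssim \tfrac19(6\psi\,\cdot)$. That is too strong to be true, so a factor has been lost somewhere. The source is the degree mismatch: $\Phi_N$ has degree $\psi(N)$ in each variable, while $\Phi_{\gamma_2,N}$ has degree $\psi(N)$ in $X$ and $Y$ separately, so $\Phi_N(X^3,Y^3)$ has degree $3\psi$. The product of the $9$ twists, each of degree $\psi$, has degree $9\psi$, which does not match.

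The correct relation is therefore the norm in one variable only:
$$\prod_{\zeta}\Phi_{\gamma_2,N}(\zeta X,Y)=\Phi_N(X^3,Y^3),$$
taking the product over the $3$ twists in $X$ alone. With this relation, $3\,m(\Phi_{\gamma_2,N})=m(\Phi_N)$.

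\textbf{Step 4: assembling the bound.} From Step 3,
$$h(\Phi_{\gamma_2,N})\le \tfrac13 m(\Phi_N)+2\psi(N)\ln 2,$$
and so
$$h(\Phi_{\gamma_2,N})\le \tfrac13\bigl(h(\Phi_N)+2\ln(\psi(N)+1)\bigr)+2\psi(N)\ln2.$$

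\textbf{The main obstacle: constant control.} Inserting Theorem~\ref{old} gives a constant $9.5387+\ln 2\approx 10.23$, plus a negligible logarithmic term, which exceeds $9.06$. The generic factor $2^{2d}$ is thus too lossy, and this is where the real work lies.

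What I would try first is to avoid the Mahler-measure detour altogether and redo the argument of \cite{BGP23} directly for $\gamma_2$. Write the coefficients of $\Phi_{\gamma_2,N}(X,\gamma_2)$ as elementary symmetric functions in the $\psi(N)$ conjugates $\gamma_2(M\tau)$. Then bound $\ln\max(1,|\gamma_2|)\le\tfrac13\ln\max(1,|j|)+O(1)$, using the explicit $q$-expansion $\gamma_2=q^{-1/3}(1+248q+\cdots)$.

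Next, apply the same integration over the fundamental domain, i.e.\ the Mahler-measure computation of $\Phi_N(X,j(\tau))$, that yields the constant $9.5387$. Here the leading term $\tfrac13\cdot 6\psi(N)\ln N$ arises exactly, and the constant becomes one third of the $j$-constant plus corrections. The corrections are the explicit bound $|\gamma_2^3-j|$-type comparison, i.e.\ $|\ln|q^{1/3}\gamma_2||\le c$ on the domain, together with the interpolation step from values at $\tau$ to coefficients in $Y$.

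Bounding that $O(1)$ numerically on the fundamental domain is the delicate step that determines whether $9.06$ is achieved. Small $N$ (e.g.\ $N=1,2$) would be checked directly.
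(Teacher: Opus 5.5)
Your proposal does not reach $9.06$, and it stalls exactly where the paper's one new ingredient is needed. The route you settle on at the end, rerunning \cite{BGP23} for $\gamma_2$, is the paper's route: controlling function $\eta^8$ with the bound $0.376$ of \autoref{eta}, invariance of $|\eta^8(\tau)|(\operatorname{Im}\tau)^2$ to produce $\ln N-2\lambda_N$, the interpolation step \eqref{S_N} with $L=12$, a numerical maximum of $-\frac12\ln|\eta^8(\tau)|$ over $\gamma_2(\tau)\in[12,24]$, and a direct check for $N\le 20$. But you plan to use the estimates of \cite{BGP23} unchanged, and with those this computation lands at roughly $9.06+1.85\approx 10.9$. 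Its $L$-independent part alone, $0.188+7.58+2\ln 2$, already exceeds $9.06$.

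What makes $9.06$ attainable is \autoref{big d}. There the large-$d$ part of $\sum_{\sigma\in C_N}\ln\operatorname{Im}\widetilde{\tau_\sigma}$ is bounded by counting Farey arcs with the explicit estimates for $\sum_{k\le M}\varphi(k)/k$ and $\sum_{k\le M}\varphi(k)$ (\autoref{sum phi/k} and \autoref{sum phi}). This saves $1.8459$ over \cite[Lemma 3.3]{BGP23}.

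You also misplace the difficulty. Since $\gamma_2^3=j$, one has $\ln\max(1,|\gamma_2|)=\frac13\ln\max(1,|j|)$ exactly, so $S_{\gamma_2,N}=\frac13 S_{j,N}$ and there is no $q^{1/3}\gamma_2$-type correction to control. What fails to scale by $\frac13$ is the interpolation loss $\psi(N)\bigl(\frac{\ln L+1}{L}+4\ln 2\bigr)$. It costs $2\ln 2$ rather than $\frac23\ln 2$ in the normalised constant, so the deferred numerical step cannot rescue the argument.

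Once you have \autoref{big d}, your abandoned comparison route actually closes, and more cheaply than the paper's direct recomputation. Use it in the form of \autoref{improve}, which the paper proves from \autoref{big d} with $l=1$, independently of the $\gamma_2$ case. Your nine-twist identity in Step 1 is off by a cube, since twisting $Y$ by a cube root of unity only twists $X$ up to a unit. The one-variable identity $\prod_{\zeta^3=1}\Phi_{\gamma_2,N}(\zeta X,Y)=\Phi_N(X^3,Y^3)$ of Step 3 is correct. It follows from \autoref{product prop} and the bijection $b\mapsto 3b \bmod d$, which preserves $\gcd(a,b,d)=1$ because $\gcd(3,d)=1$.

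Now take your Step 4 inequality $h(\Phi_{\gamma_2,N})\le\frac13\bigl(h(\Phi_N)+2\ln(\psi(N)+1)\bigr)+2\psi(N)\ln 2$ and feed it \autoref{improve} instead of \autoref{old}. The normalised constant becomes $7.6928+\ln 2+\frac{\ln(\psi(N)+1)}{3\psi(N)}\le 8.62<9.06$ for every $N\ge 1$ coprime to $3$. This needs no $\gamma_2$-specific numerics and no small-$N$ check. Either way, the proof hinges on the sharpened totient-sum count, which your proposal does not supply.
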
}
\gammanew
\begin{theorem}\label{f}For all integers $N\ge2$ coprime to $48$, the height of the modular polynomial $\Phi_{\mathfrak f,N}$ is bounded by
$$
h ( \Phi_{\mathfrak f,N}) < \frac{\psi( N )}{11.98} ( \ln N-2 \lambda_{N}+\ln \ln N + 67.6).
$$
\end{theorem}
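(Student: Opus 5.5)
We follow the strategy of \cite{BGP23} for Theorem~\ref{old}, adapted to the Weber function. Since $N$ is coprime to $48$, the polynomial $\Phi_{\mathfrak f,N}(X,Y)$ has degree $\psi(N)$ in each variable. Its coefficients, viewed as a polynomial in $X$, are $\pm$ elementary symmetric functions in the values $\mathfrak f(\gamma\tau)$, up to a fixed normalisation of the $N$-th power/root of unity twist. Here $\gamma$ runs over the $\psi(N)$ cosets of $\Gamma^0(N)$ in $\mathrm{SL}_2(\mathbb Z)$, i.e.\ over matrices $\begin{pmatrix}a&b\\0&d\end{pmatrix}$ with $ad=N$, $0\le b<d$, $\gcd(a,b,d)=1$. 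We bound the height through the Mahler measure. The key identity expresses $\int_0^1 \ln|\Phi_{\mathfrak f,N}(\mathfrak f(\tau),e^{2\pi i t})|\,dt$ and the analogous sums as averages over a fundamental domain of $\sum_\gamma \ln\max(1,|\mathfrak f(\gamma\tau)|)$-type quantities. These are then converted into sums over $d\mid N$ of integrals along horocycles, exactly as in \cite{BGP23}.

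\textbf{Step 1 (pointwise bounds on $\mathfrak f$).} Using $\mathfrak f(\tau)=\zeta_{48}^{-1}\eta((\tau+1)/2)/\eta(\tau)$ and the product expansion, we prove explicit estimates of the form
\[
\bigl|\ln|\mathfrak f(\tau)| - \tfrac{\pi}{24}\operatorname{Im}\tau\bigr|\le c_1 e^{-\pi \operatorname{Im}\tau}\quad(\operatorname{Im}\tau\ge c_0),
\]
together with a uniform bound on the standard fundamental domain. Since $\mathfrak f^{24}$ has a pole of order $1$ at the cusp while $j$ has order $1$ in $q=e^{2\pi i\tau}$, $\ln|\mathfrak f|$ is roughly $\ln|j|/48\cdot 2=\ln|j|/24$. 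This explains the factor of approximately $1/12$ relative to the $6\psi(N)$ of Theorem~\ref{old} (two from taking both coordinates into account). The value $11.98$ instead of $12$ absorbs error terms.

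\textbf{Step 2 (sum over cosets).} For $\tau$ in the fundamental domain, $\gamma\tau=(a\tau+b)/d$ has imaginary part $a\operatorname{Im}\tau/d$. Summing $\max(0,\ln|\mathfrak f(\gamma\tau)|)$ gives a main term $\frac{\pi}{24}\sum_{ad=N}\phi(\gcd)\,\cdot\, a\operatorname{Im}\tau$. We also need a lower-order term from cosets with small imaginary part. For these we move $\gamma\tau$ back to the fundamental domain via the $\mathrm{SL}_2(\mathbb Z)$-action on $\mathfrak f$, which permutes $\mathfrak f,\mathfrak f_1,\mathfrak f_2$ up to roots of unity. The Dedekind-sum/continued-fraction estimate then produces the $\psi(N)(\ln N-2\lambda_N)$ term, where $\lambda_N=\sum_{p\mid N}\frac{\ln p}{p+1}$ arises from $\sum_{d\mid N}$ weighted by $\psi$.

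\textbf{Step 3 (from Mahler measure to height).} The coefficient bound $h(P)\le m(P)+\deg$-dependent binomial term introduces the $\ln\ln N$. Specifically, we use $h(P)\le m(P)+\ln\binom{n}{\lfloor n/2\rfloor}$, or, better, the refined estimate of \cite{BGP23} that evaluates at $|X|=$ optimal radius $r\approx\ln N$. This costs $\psi(N)\ln\ln N$ divided by the normalising factor, and is where the $\ln\ln N$ in the statement arises.

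\textbf{Main obstacle.} The hardest step is making Step 2 fully explicit with the small constant. This requires controlling $\ln|\mathfrak f_1|,\ln|\mathfrak f_2|$ near the cusps and the near-cancellation between the $\psi(N)$ cosets. I would first prove a clean integral formula
\[
\int \sum_\gamma \ln^+|\mathfrak f(\gamma\tau)| = \tfrac{\psi(N)}{24}\bigl(\ln N-2\lambda_N\bigr)+O(\psi(N)),
\]
mirroring the $j$ case with $j$ replaced by $\mathfrak f^{24}$ (since $j=(\mathfrak f^{24}-16)^3/\mathfrak f^{24}$). I would then track all constants numerically, checking small $N$ separately by computer to reach $67.6$.
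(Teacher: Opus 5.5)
There is a genuine gap: your proposal has no working mechanism for either the $\ln\ln N$ term or the leading coefficient $\frac{1}{11.98}$.

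First, Step 3 misplaces the source of $\ln\ln N$. Passing from Mahler measure to height only costs $O(\psi(N))$. In one variable, $\ln\binom{n}{\lfloor n/2\rfloor}\le n\ln 2$ with $n=\psi(N)$. The second variable is handled by the interpolation lemma of \cite{BrSu10} at the points $\mathfrak f(\tau_k)=L(1+k/\psi(N))$, with $L$ a fixed constant. The total loss is $\psi(N)\bigl(\frac{\ln L+1}{L}+4\ln 2\bigr)$, which only affects the constant.

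In the paper (following \cite{BrPa22}), the $\ln\ln N$ comes from the term $\frac{1}{11.98}\sum_{\sigma}\ln\operatorname{Im}\widetilde{\tau_\sigma}$. By \autoref{imf} and the $\Gamma_\theta$-invariance of $|\mathfrak f|$, $\operatorname{Im}\widetilde{\tau_\sigma}\le\frac{24}{\pi}\ln(|\mathfrak f_\sigma|+1.03)$. Concavity of $\ln$ then bounds the sum by $\psi(N)\ln\bigl(S_{\mathfrak f,N}(\tau)/\psi(N)+O(1)\bigr)+O(\psi(N))$. Since $S_{\mathfrak f,N}(\tau)/\psi(N)$ is of order $\ln N$, this self-referential inequality, bootstrapped through the recursively defined $c_n$, yields $\ln\ln N$ plus a constant.

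If you instead carry out the \cite{BGP23} strategy you announce, bounding $\sum_\sigma\ln\operatorname{Im}\widetilde{\tau_\sigma}$ by Farey-arc counting, no $\ln\ln N$ appears, but the constant is larger. The paper gets $75$ this way (\autoref{fnew}), and $75\le 67.6+\ln\ln N$ only for $N\ge e^{e^{7.4}}$. So unless you can push that constant below about $68.8$, this route does not give the statement for the $N$ that matter.

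Second, nothing in Steps 1--2 produces $\frac1{11.98}$, and your heuristic is off. Near $\infty$, $\mathfrak f^{48}\sim q^{-1}\sim j$, so $\ln|\mathfrak f|\approx\frac1{48}\ln|j|$, not $\frac1{24}\ln|j|$. Controlling $\mathfrak f$ as $j$ is controlled classically, by $|\Delta|^{1/48}$ (weight $\frac14$, so $|\Delta|^{1/48}y^{1/8}$ is invariant), gives leading coefficient $\frac{6}{48}=\frac18>\frac1{11.98}$, which is too weak for large $N$.

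The missing idea is a controlling function adapted to $\Gamma_\theta=\langle T^2,S\rangle$. The paper's $g=|\eta(\tau/2)\eta(2\tau)|^{\frac1{5.99}+\frac16}/|\eta(\tau)|^{\frac13}$ satisfies $g(\gamma\tau)=|c\tau+d|^{\frac1{5.99}}g(\tau)$ for $\gamma\in\Gamma_\theta$, so $g\,y^{1/11.98}$ is invariant. It puts essentially all its vanishing at $\infty$, where it absorbs the pole of $\mathfrak f$, and essentially none at the second cusp $\pm1$, where $\mathfrak f$ vanishes anyway. Hence $\ln\max(g,|\mathfrak f|g)\le 1.05$ on $\mathcal F_\theta$.

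Write $\ln\max(1,|\mathfrak f_\sigma|)=\ln\max(g_\sigma,|\mathfrak f_\sigma|g_\sigma)-\ln g_\sigma$. The main term $\frac{\psi(N)}{11.98}(\ln N-2\lambda_N)$ is then exact, coming from $-\frac{1}{11.98}\sum_\sigma\ln\operatorname{Im}\tau_\sigma=-\frac1{11.98}\sum_\sigma\ln\frac{ay}{d}$, not from a Dedekind-sum estimate. The term $-\sum_\sigma\ln g_\sigma=-\psi(N)\ln g(\tau)$ needs the norm identity $\prod_\sigma g(\sigma\tau)=g(\tau)^{\psi(N)}$. The paper derives this from the Hecke-eigenform results of \cite{KiS25} for eta-quotients, plus a M\"obius inversion from $C'_N$ to $C_N$.

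Your integral formula with leading coefficient $\frac1{24}$ is likewise inconsistent with the paper's lower bound $h(\Phi_{\mathfrak f,N})>\frac{\psi(N)}{12}(\ln N-2\lambda_N+C)$.

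Finally, the conjugates of $\mathfrak f$ are $\mathfrak f\circ\sigma$ for $\sigma\in C_{48,N}$ (upper-right entry $48b$, or $2b$ as far as moduli go), not for $\sigma\in C_N$. With entry $b$ you pick up values of $\mathfrak f_1$, since $\mathfrak f(\tau+1)=\zeta_{48}^{-1}\mathfrak f_1(\tau)$ is not a root-of-unity multiple of $\mathfrak f(\tau)$.
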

\def\improve{
\begin{theorem}\label{improve}For all integers $N\ge1$, the height of the modular polynomial $\Phi_{N}$ is bounded by $$h(\Phi_N)\le 6\psi(N)(\ln N-2\lambda_N+7.6928).$$
\end{theorem}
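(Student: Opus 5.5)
The plan is to redo the argument of \cite{BGP23} behind Theorem~\ref{old}, tightening the constants in the estimates for the Mahler measure of $\Phi_N(X,j(\tau))$ averaged over the fundamental domain. We write $\Phi_N(X,j(\tau))=\prod_{\gamma}(X-j(\gamma\tau))$, where $\gamma$ runs over the $\psi(N)$ cosets of $\Gamma_0(N)$-type matrices $\begin{pmatrix}a&b\\0&d\end{pmatrix}$ with $ad=N$, $0\le b<d$ and $\gcd(a,b,d)=1$. The height $h(\Phi_N)$ is then controlled by $h(\Phi_N)\le m(\Phi_N)+\psi(N)\ln 2$-type losses (coefficient versus Mahler measure), up to a factor coming from the $Y$-degree. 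Here $m(\Phi_N)$ is the two-variable Mahler measure. By Jensen's formula it equals $\sum_\gamma$ of an integral over $|y|=1$ of $\ln^+|j(\gamma\tau)|$, with $\tau$ parametrising the preimage of $y$.

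\textbf{Step 1.} Following \cite{BGP23}, convert the integral over $|Y|=1$ into an integral over $\tau$ in the fundamental domain, weighted by the pushforward density. Then use $\ln^+|j(\tau)|\le 2\pi\,\mathrm{Im}\,\tau+c_1$ for $\mathrm{Im}\,\tau$ large and $\ln^+|j(\tau)|\le c_2$ otherwise. Summing over $\gamma$ gives $\mathrm{Im}(\gamma\tau)=a\,\mathrm{Im}\,\tau/d$. The sum $\sum_{ad=N}\psi$-weighted $\ln(a/d)$ terms produces the main term $6\psi(N)(\ln N-2\lambda_N)$, where $\lambda_N=\sum_{p\mid N}\frac{\ln p}{p+1}$ arises from the arithmetic of $\sum_{d\mid N}$. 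I reuse that bookkeeping verbatim.

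\textbf{Step 2.} Improve the constant. The constant $9.5387$ comes from several crude pieces:
\begin{enumerate}
\item the bound on $\ln|j|$ near the cusp, where $|j(\tau)-q^{-1}|\le 744+\dots$;
\item the bound on $\ln^+|j|$ in the compact part of the fundamental domain, where $|j|\le 1728$-scale values;
\item the passage from Mahler measure to height, via $h(P)\le m(P)+\deg\cdot\ln2$ per variable.
\end{enumerate}
I would sharpen each one. First, replace the piecewise bound by an explicit function $\ln^+|j(\tau)|\le \max(0,2\pi\,\mathrm{Im}\,\tau+\ln(1+\varepsilon(\mathrm{Im}\,\tau)))$ with $\varepsilon$ computed from the $q$-expansion tail. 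Second, integrate the error against the exact density $3/(\pi y^2)$ on the fundamental domain rather than bounding it by a constant. Third, use the sharper coefficient bound $|c_{k}|\le\binom{n}{k}M(P)$ together with the fact that the $X$-degree is $\psi(N)$ and the $Y$-degree is also $\psi(N)$. That gives $\ln\binom{\psi}{\psi/2}\le \psi\ln2$ only once per variable, optimised jointly. Each improvement is a numerical evaluation, and I would check that the total constant lands at $7.6928$.

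\textbf{Main obstacle.} The hardest part is the region of small $N$ and the uniformity in $N$: the asymptotic error terms (from $1/\psi(N)$ corrections and from the $\ln$ of binomials) must be bounded explicitly for all $N\ge1$. I would first prove the inequality for $N$ beyond an explicit threshold using monotone error bounds. The remaining finitely many $N$ would be handled by comparing with known computed heights of $\Phi_N$ (tables from Bröker--Sutherland), verifying the inequality directly there.
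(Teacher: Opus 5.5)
There is a genuine gap. The proposal never supplies the estimate that lowers the constant, and Step~1 does not work as written.

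You apply $\ln^+|j(\tau)|\le 2\pi\operatorname{Im}\tau+c_1$ (and $\le c_2$ ``otherwise'') to $\gamma\tau$ with $\operatorname{Im}(\gamma\tau)=ay/d$. These bounds hold only on the fundamental domain. For $d\ge\sqrt{Ny}$ the point $\gamma\tau$ lies outside it, and $|j(\gamma\tau)|$ is governed by the reduced point $\widetilde{\gamma\tau}$, whose imaginary part near a cusp $h/k$ can be of size $d^2/(Nyk^2)$ (\autoref{imt}). These $\gamma$ carry the $\ln N$ growth. You are right that $\ln(a/d)$-bookkeeping gives the main term, but the mechanism is logarithmic, not $2\pi\operatorname{Im}$.

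In the argument of \cite{BGP23} that the paper modifies, one writes $\ln\max(1,|j|)=\ln\max(|\Delta|,|j\Delta|)-\ln|\Delta|$. One then uses the $\mathrm{SL}_2(\mathbb Z)$-invariance of $|\Delta(\tau)|y^6$ and the Hecke identity $\sum_\gamma\ln|\Delta(\gamma\tau)|=\psi(N)\ln|\Delta(\tau)|$. This splits $S_N(\tau)$ into three pieces:
\begin{itemize}
\item the main term $-6\sum_\gamma\ln\operatorname{Im}(\gamma\tau)=6\psi(N)(\ln N-2\lambda_N-\ln y)$;
\item the error $6\sum_\gamma\ln\operatorname{Im}\widetilde{\gamma\tau}$, which must be controlled by the Farey dissection of \autoref{farey};
\item explicit $O(\psi(N))$ terms.
\end{itemize}
Separately, $3/(\pi y^2)$ is not the relevant measure. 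Jensen's formula over $|Y|=1$ pulls back to the curve $\{|j(\tau)|=1\}$ near $\rho$, not to hyperbolic area on the fundamental domain.

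Step~2, where the actual proof should be, is only a list of possible sharpenings. ``Check that the total constant lands at $7.6928$'' is not an argument, and none of your items is shown to gain $9.5387-7.6928=1.8459$. Item~3 cannot do it: the coefficient-versus-Mahler-measure loss is of order $\psi(N)\ln 2$, a fraction of a unit inside $6\psi(N)(\cdots)$. Optimising the binomials saves only $O(\ln\psi(N))$.

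The paper's whole gain is in the Farey count of \cite[Lemma 3.3]{BGP23}. For denominator $k$ there are only $\varphi(k)$ admissible fractions $h/k$ per unit interval, so one can use the average order of $\varphi$. From \autoref{sum phi/k}, \autoref{sum phi} and Abel summation one gets $\frac1{M+1}\sum_{k\le M}\frac{\varphi(k)}k\ln\frac{M+1}k<0.645$ and $\frac1{M^2}\sum_{k\le M}\varphi(k)\ln\frac{M+1}k<0.694$ for every $M$. These come from a direct check for $M\le10^3$ plus monotonicity of explicit majorants beyond. Hence the $d\ge\sqrt{Ny}$ part of $\sum_\gamma\ln\operatorname{Im}\widetilde{\tau_\gamma}$ is at most $5.36\,\psi(N)$ (\autoref{big d} with $l=1$). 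Inserting this into the otherwise unchanged argument of \cite{BGP23} gives $7.6928$.

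These bounds are uniform in $M$, hence in $N$. So the small-$N$ uniformity issue you flag as the main obstacle is not where the difficulty lies.
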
}
\def\fnew{
\begin{theorem}\label{fnew}For all integers $N\ge1$ coprime to $48$, the height of the modular polynomial $\Phi_{\mathfrak f,N}$ is bounded by$$h\left(\Phi_{\mathfrak f,N}\right)<\frac{\psi(N)}{11.98}(\ln N-2\lambda_N+75).$$
\end{theorem}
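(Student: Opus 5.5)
The strategy is to reduce to large $N$ and then redo the Mahler-measure estimate behind \Cref{f}, replacing the pointwise step that produces $\ln\ln N$ by the averaged coset estimate used in \cite{BGP23} for \Cref{old}. The key observation is that \Cref{f} already handles a huge range. For $N\ge 2$ coprime to $48$ we have $\ln\ln N+67.6\le 75$ whenever $\ln\ln N\le 7.4$, i.e. for $N\le e^{e^{7.4}}$. The case $N=1$ is trivial: $\Phi_{\mathfrak f,1}=X-Y$ has height $0$, while the right-hand side is positive since $\lambda_1=0$. So it suffices to prove the bound for $N>e^{e^{7.4}}$. In that range we may be generous with all lower-order error terms, since any error of size $o(\psi(N))$ is absorbed for such $N$.

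For large $N$ I would pass from height to Mahler measure in the usual way. Since $\Phi_{\mathfrak f,N}$ has degree $\psi(N)$ in each variable, we have $h(\Phi_{\mathfrak f,N})\le m(\Phi_{\mathfrak f,N})+2\psi(N)\ln 2$, and this loss is harmless because it is only $O(\psi(N))$. Next, write $m(\Phi_{\mathfrak f,N})$ by Jensen's formula as an average over $|X|=1$ of $\sum\ln\max(1,|Y_i|)$, where the $Y_i$ are the roots in $Y$. Parametrizing $X=\mathfrak f(\tau)$ along the preimage of the unit circle, the roots are $\zeta\,\mathfrak f(M\tau)$ for $M$ running over the $\psi(N)$ coset representatives $\begin{pmatrix}a&b\\0&d\end{pmatrix}$ with $ad=N$, up to roots of unity. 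From the $q$-expansion $\mathfrak f(\tau)=q^{-1/48}\prod(1+q^{n-1/2})$, we get $\ln|\mathfrak f(\tau)|=\tfrac{\pi}{24}\operatorname{Im}\tau+E(\tau)$, with $|E|$ explicitly bounded once $\operatorname{Im}\tau\ge c$. Feeding this into the sum over cosets reduces the main term to exactly the quantity $\sum_M\max(0,\operatorname{Im}M\tau)$ integrated against the relevant measure. This is the same quantity that \cite{BGP23} evaluates for $j$, where $\ln|j|\approx 2\pi\operatorname{Im}\tau$. There it yields $\psi(N)(\ln N-2\lambda_N+O(1))$, which rescales to our leading coefficient $\psi(N)/11.98$ with the same $\ln N-2\lambda_N$ shape.

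The main obstacle is the treatment of the region where $\operatorname{Im}M\tau$ is small. This is where the earlier argument presumably picked up $\ln\ln N$, by bounding $\ln\max(1,|\mathfrak f(M\tau)|)$ pointwise over a region depending on $N$. Here I would instead use that $\mathfrak f$ has no zeros or poles in the upper half-plane, so $\ln\max(1,|\mathfrak f|)$ is bounded by an absolute constant on any fixed compact set modulo the relevant group. I would then show that the number of cosets $M$ for which $M\tau$ lands in the "bad" strip $\operatorname{Im}M\tau<c$ contributes only $O(\psi(N))$ in total, with an explicit constant. This uses the arithmetic of $d\mid N$ and $b\bmod d$: the points $M\tau=(a\tau+b)/d$ are equidistributed horizontally, and one can move them into a fundamental domain using the transformation law of $\mathfrak f$ under $\mathrm{SL}_2(\mathbb Z)$, which only permutes $\mathfrak f,\mathfrak f_1,\mathfrak f_2$ up to $48$th roots of unity. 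A uniform explicit bound on $\max(|\ln|\mathfrak f||,|\ln|\mathfrak f_1||,|\ln|\mathfrak f_2||)$ on the standard fundamental domain, minus its cusp growth, then gives the $N$-independent constant. The final step is bookkeeping: collect the constants and check that, for $N>e^{e^{7.4}}$, the total is below $\frac{\psi(N)}{11.98}(\ln N-2\lambda_N+75)$.
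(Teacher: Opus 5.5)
Your reduction to $N>e^{e^{7.4}}$ via Theorem~\ref{f} is legitimate, and it replaces the paper's finite check for $N\le 30$. In that range, however, the leading coefficient decides everything, and that is where the proposal has a genuine gap.

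The claim that the $j$-computation ``rescales to our leading coefficient $\psi(N)/11.98$'' is false. Comparing $\ln|j|\approx 2\pi\operatorname{Im}\tau$ with $\ln|\mathfrak f|\approx\frac{\pi}{24}\operatorname{Im}\tau$ turns $6\psi(N)$ into $\frac{6}{48}\psi(N)=\frac{\psi(N)}{8}$. Equivalently, you are using $\ln\max(1,|\mathfrak f|)\le\frac1{48}\ln\max(1,|j|)+O(1)$. A bound with leading term $\frac{\psi(N)}{8}\ln N$ is far too weak once $\ln N>e^{7.4}$. Only $o(\psi(N))$ errors are absorbed there; all $O(\psi(N))$ terms, including your $2\ln 2\,\psi(N)$, share a fixed budget of about $\frac{75}{11.98}\psi(N)$.

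The missing factor $\frac23$ comes from the second cusp. $|\mathfrak f|$ is invariant only under $\Gamma_\theta$, whose cusps are $\infty$ (width $2$) and $1$ (width $1$). At the latter, $\mathfrak f(1-1/\tau')=e^{-\pi i/24}\mathfrak f_2(\tau')\to 0$, so reduced points climbing that cusp contribute nothing to $\ln\max(1,|\mathfrak f|)$. Your sketch never tracks which cusp $\widetilde{M\tau}$ approaches. Your description of the roots already loses this information: the conjugates are $\mathfrak f\circ\sigma$ with $\sigma\in C_{2,N}$. If you index by $M\in C_N$, then for odd upper-right entry the root is a root of unity times $\mathfrak f_1(M\tau)$, not $\mathfrak f(M\tau)$.

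Your treatment of small $\operatorname{Im}M\tau$ is also inverted. The cosets with $\operatorname{Im}M\tau=ay/d<c$ (essentially those with $d\gtrsim\sqrt{Ny}$) do not reduce into a fixed compact set. Near a Farey fraction $h/k$ their reduced heights reach about $d^2/(Nyk^2)$ (Lemma~\ref{imt}). These are exactly the points that carry the $\psi(N)\ln N$ main term, so they cannot contribute only $O(\psi(N))$.

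Nor does the argument of \cite{BGP23} evaluate $\sum_M\operatorname{Im}\widetilde{M\tau}$ against a measure. It works pointwise with a controlling form ($\Delta$ for $j$) and needs only the much smaller quantity $\sum_\sigma\ln\operatorname{Im}\widetilde{\tau_\sigma}=O(\psi(N))$, bounded via Farey dissection. This direct bound, not a special treatment of small $\operatorname{Im}M\tau$, is what removes the $\ln\ln N$ of Theorem~\ref{f}.

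The paper does this for $\mathfrak f$ with $g=|\eta(\tau/2)\eta(2\tau)|^{\frac1{5.99}+\frac16}/|\eta(\tau)|^{\frac13}$. This $g$ has the following properties:
\begin{itemize}
\item it cancels the growth of $\mathfrak f$ at $\infty$;
\item it stays bounded at the cusp $1$, so $\ln\max(g,|\mathfrak f|g)\le 1.05$ on $\mathcal F_\theta$;
\item it satisfies $g(\tilde\tau)(\operatorname{Im}\tilde\tau)^{1/11.98}=g(\tau)(\operatorname{Im}\tau)^{1/11.98}$;
\item by the Hecke eigenform property of eta quotients, $\prod_{\sigma\in C_{2,N}}g(\sigma\tau)=g(\tau)^{\psi(N)}$.
\end{itemize}
Summing $\ln\max(1,|\mathfrak f_\sigma|)\le 1.05-\ln g(\tau_\sigma)+\frac1{11.98}\bigl(\ln\operatorname{Im}\widetilde{\tau_\sigma}-\ln\operatorname{Im}\tau_\sigma\bigr)$ over $C_{2,N}$ yields the main term $\frac{\psi(N)}{11.98}(\ln N-2\lambda_N)$ exactly. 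The rest comes from Lemma~\ref{big d} with $l=2$, \cite[Lemma 3.5]{BGP23}, and the interpolation lemma at points with $\mathfrak f(\tau_k)\in[L,2L]$.

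The small weight of $g$ is what gives $\frac1{11.98}$, and it is possible because $g$ only has to stay bounded (not decay) at the cusp $1$. The controlling function implicit in your rescaling is $|\eta|^{1/2}$, for which $|\eta|^{1/2}(\operatorname{Im}\tau)^{1/8}$ is invariant, giving $\frac18$.
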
}
\fnew\par
For the 2 theorems with $\ln\ln N$, we follow the approach of \cite{BrPa22}. Note that the theorems with $\ln\ln N$ are better for reasonable $N$ (for example, for $N<\exp(\exp(4))<10^{22}$). We prove the theorems without $\ln\ln N$ because they have the right asympototic, which we also prove.\par
For the 2 theorems without $\ln\ln N$, we follow the approach of \cite[Section 3]{BGP23}, with an improvement resulting from estimating sums involving the Euler totient function $\varphi$. This also results in a improvement to the constant in \autoref{old}, namely
\improve\par
Since $|\mathfrak f|$ is invariant only for a strict subgroup of $\mathrm{PSL}_2(\mathbb Z)$, the 2 theorems involving $\mathfrak f$ require theory about Hecke eigenforms modular for subgroups of $\mathrm{PSL}_2(\mathbb Z)$. In particular we use results from \cite{KiS25}. \cite{AS10} also has related results.
\section{Preliminaries}
Let $l$ and $N$ be positive integers. $N$ will be the order of our modular polynomials for a function of level $l$, so we assume $N$ is coprime to $l$. Let $$C_{l,N}:=\left\{\begin{pmatrix}a&lb\\0&d\end{pmatrix}\in\mathrm{M}_2(\mathbb Z)\middle|
ad=N,0\le b<d,\gcd(a,b,d)=1\right\}.$$
Elements of $C_N:=C_{1,N}$ correspond to cyclic $N$-isogenies from a fixed complex elliptic curve. $C_{l,N}$ has cardinality
$$\psi(N)=N\prod_{\substack{p|N\\p\in\mathbb P}}\left(1+\frac1p\right)$$
where $\psi$ is the Dedekind psi function.\par
Let $\tau\in\mathbb H$.
\begin{definition}   
Let $f$ be modular function of level $l$ and $N$ coprime to $l$. Let $f_N(\tau):=f(N\tau)$. The minimal polynomial $\Phi_{f,N}(X,f)$ of $f_N$ over $\mathbb C(f)$  is called the modular polynomial of order $N$ for $f$. $\Phi_N=\Phi_{j,N}$ is called the classical modular polynomial.
\end{definition}
Later on we will estimate the height of $\Phi_{f,N}$ by its Mahler measure. To do so we will need the form given by the following proposition:
\begin{proposition}\label{product prop}
Suppose $f$ is a modular function of level $l$ and $N$ coprime to $l$. Suppose $f$ is a Hauptomodul for a congruence subgroup $\Gamma$ such that $\Gamma$ acts on the right of $\{\Gamma\sigma\}_{\sigma\in C_{l,N}}$ by multiplication. Then
\begin{align}\Phi_{f,N}(X,f)=\prod_{\sigma\in C_{l,N}}(X-f\circ\sigma).\label{product}\end{align}
Moreover, if the only pole of $f$ is at $\hat\infty$, then the coefficients of $\Phi_{f,N}(X,f)$ are polynomials in $f$.
\end{proposition}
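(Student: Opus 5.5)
Let $P(X):=\prod_{\sigma\in C_{l,N}}(X-f\circ\sigma)$. The plan is to show that the coefficients of $P$ lie in $\mathbb C(f)$ and that $P$ is irreducible over $\mathbb C(f)$ with $f_N$ as a root. Since $P$ is monic, it is then the minimal polynomial of $f_N$, and the polynomiality claim follows from a pole analysis.

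\textbf{Step 1: $f_N$ is a root.} The matrix $\begin{pmatrix}N&0\\0&1\end{pmatrix}$ lies in $C_{l,N}$ (take $a=N$, $b=0$, $d=1$), and $f\circ\begin{pmatrix}N&0\\0&1\end{pmatrix}=f_N$. So $X-f_N$ is one of the factors of $P$.

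\textbf{Step 2: the coefficients of $P$ lie in $\mathbb C(f)$.} First, each $f\circ\sigma$ depends only on the coset $\Gamma\sigma$, since $f$ is $\Gamma$-invariant. The coefficients of $P$ are elementary symmetric functions of the $f\circ\sigma$, so by hypothesis they are invariant under $\Gamma$, because right multiplication by $\gamma\in\Gamma$ permutes the cosets $\{\Gamma\sigma\}$. Here I need the cosets $\Gamma\sigma$ for $\sigma\in C_{l,N}$ to be pairwise distinct, so that the action genuinely permutes the factors. I expect to get this from the standard fact that $C_{l,N}$ is a system of representatives for $\Gamma\backslash\Gamma\begin{pmatrix}N&0\\0&1\end{pmatrix}\Gamma$ when $\gcd(N,l)=1$. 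Each coefficient is then a meromorphic $\Gamma$-invariant function. It is also meromorphic at the cusps, since each $f\circ\sigma$ has a Puiseux $q$-expansion with finite principal part. Because $f$ is a Hauptmodul for $\Gamma$, the field of such functions is $\mathbb C(f)$.

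\textbf{Step 3: irreducibility.} Consider the roots $f\circ\sigma$. For $\gamma\in\Gamma$, the map $g\mapsto g\circ\gamma$ is a field automorphism fixing $\mathbb C(f)$; it extends to the splitting field and sends $f\circ\sigma$ to $f\circ(\sigma\gamma)$. By Step 2 the cosets $\Gamma\sigma$ correspond bijectively to the roots. So it suffices that $\Gamma$ acts transitively on the cosets $\{\Gamma\sigma\}$, since then the minimal polynomial of $f_N$ has all the $f\circ\sigma$ as roots. Transitivity follows because these cosets form a single double coset $\Gamma\alpha\Gamma$ with $\alpha=\begin{pmatrix}N&0\\0&1\end{pmatrix}$. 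I also need the roots to be distinct. This holds because $f$ is a Hauptmodul: $f\circ\sigma=f\circ\sigma'$ would force $\sigma'\sigma^{-1}\in\Gamma$ up to scalars, since $f$ is injective on $\Gamma\backslash\mathbb H$. This degree and transitivity bookkeeping is the main obstacle. It relies on identifying $C_{l,N}$ with the double coset, and it is exactly what the hypothesis on the right action is meant to supply.

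\textbf{Step 4: polynomiality.} Assume the only pole of $f$ is at $\hat\infty$. Then each $f\circ\sigma$ is holomorphic on $\mathbb H$, and hence so is every coefficient $c(f)$ of $P$. A rational function of $f$ with a finite pole at some value $f=z_0$ would have a pole at some $\tau\in\mathbb H$ or at some cusp other than $\infty$. Since $f$ is surjective onto $\mathbb P^1$ on $\Gamma\backslash\mathbb H^*$, any such $z_0$ is attained at some point of $\Gamma\backslash\mathbb H^*$ other than $\hat\infty$. At a finite cusp $c$ with $f(c)=z_0$, each $f\circ\sigma$ is finite there: $\sigma$ maps $c$ to a cusp $\Gamma$-inequivalent to $\infty$, because $\sigma$ has determinant $N$ coprime to $l$ and the cusp class of $\infty$ is preserved. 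Therefore $c(f)$ has no finite poles, and is a polynomial in $f$.
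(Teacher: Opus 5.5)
Your route is the paper's: distinct cosets, $\Gamma$-invariance of the symmetric functions, a Cox-style Galois argument for irreducibility, then a pole analysis. However, the step you yourself call the main obstacle is asserted rather than proved, and the justification you give is false in general.

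\textbf{Transitivity.} It is not a standard fact that $C_{l,N}$ represents $\Gamma\backslash\Gamma\alpha\Gamma$ for a level-$l$ group. For $\Gamma=\Gamma_1(l)$ and a prime $p\not\equiv\pm1\pmod l$, every element of $\pi_l(\Gamma)\,\mathrm{diag}(p,1)\,\pi_l(\Gamma)$ has diagonal $\pm(p,1)$. So $\mathrm{diag}(1,p)\in C_{l,p}$ is not in $\Gamma\alpha\Gamma$. Nor does the right-action hypothesis by itself give transitivity. It only says $\Gamma C_{l,N}\Gamma=\Gamma C_{l,N}$, i.e.\ the set of cosets is stable. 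If that set split into several $\Gamma$-orbits, $P$ would factor over $\mathbb C(f)$ and the proposition would fail.

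The missing idea is to use $\Gamma(l)\subseteq\Gamma$ together with $\gcd(l,N)=1$:
\begin{itemize}
\item Reduction gives a surjection $\Gamma(l)\to\mathrm{SL}_2(\mathbb Z/N\mathbb Z)$.
\item The cosets $\mathrm{SL}_2(\mathbb Z)\alpha\gamma$ making up $\mathrm{SL}_2(\mathbb Z)\alpha\,\mathrm{SL}_2(\mathbb Z)$ are indexed by $\Gamma_0(N)\backslash\mathrm{SL}_2(\mathbb Z)\cong\mathbb P^1(\mathbb Z/N\mathbb Z)$, on which $\mathrm{SL}_2(\mathbb Z/N\mathbb Z)$ acts transitively.
\item Hence for each $\sigma\in C_{l,N}$ there is $\gamma\in\Gamma(l)$ with $\alpha\gamma\in\mathrm{SL}_2(\mathbb Z)\sigma$.
\item The hypothesis puts $\alpha\gamma$ in some $\Gamma\sigma'$. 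The cosets $\mathrm{SL}_2(\mathbb Z)\sigma$ are disjoint, by the paper's upper-triangular computation (which also gives the distinctness you need in Step 2). This forces $\sigma'=\sigma$, so $\sigma\in\Gamma\alpha\Gamma$.
\end{itemize}
The paper itself only points to Cox's $\mathrm{SL}_2(\mathbb Z)$ argument here, so this is exactly what needs to be written out.

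\textbf{Step 4.} The same defect appears here: ``determinant coprime to $l$'' does not preserve the cusp class of $\infty$. For $\Gamma_1(5)$ and $\sigma=\mathrm{diag}(2,1)$, the cusp $3/5\notin\Gamma\infty$ is sent to $6/5\in\Gamma\infty$. Use the hypothesis instead. As a M\"obius map, $\sigma^{-1}$ acts as $\mathrm{adj}(\sigma)=\begin{pmatrix}d&-lb\\0&a\end{pmatrix}$, which lies in $T^{lm}C_{l,N}\subseteq\Gamma C_{l,N}$ for a suitable $m$. So $\sigma c=\gamma\infty$ gives $c\in\Gamma C_{l,N}\Gamma\infty=\Gamma C_{l,N}\infty=\Gamma\infty$. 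The paper also merely asserts that $f\circ\sigma$ has its only pole at $\hat\infty$.

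\textbf{Distinctness of the roots.} Your argument via injectivity of the Hauptmodul on $\Gamma\backslash\mathbb H$ is a valid alternative to the paper's $q$-expansion argument in $\mathbb Q(\zeta_l)((q^{1/l}))$. You need to add two points:
\begin{itemize}
\item Some single $\gamma\in\Gamma$ must satisfy $\sigma'\tau=\gamma\sigma\tau$ for uncountably many $\tau$, hence identically.
\item Positivity of the diagonal entries rules out $\sigma'\in-\Gamma\sigma$.
\end{itemize}
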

\begin{proof} We first prove that the $\Gamma\sigma$'s are distinct. Suppose $$\begin{pmatrix}
p&q\\
r&s
\end{pmatrix}\sigma_1=\sigma_2.$$
Since the $\sigma_k$'s are upper triangular, we find $r=0$. Having determinant 1 forces $p=s=\pm1$. Since the $\sigma_k$'s have positive entries, we find $p=s=1$. Since $d$ is coprime to $l$, we find $l$ divides $q$. Since $b$ runs over a set of representatives modulo $d$, we find $q=0$. So, the $\Gamma\sigma$'s are distinct.\par
The conditions on $f$ and $\Gamma$ then forces the right hand side of \eqref{product} to lie in $\mathbb C(f)[X]$. We then use the same method as \cite[Exercise 11.10]{Cox22} with a generalisation coming from the fact that $f(\tau)$ lies in $\mathbb Q(\zeta_l)((e^{\frac{2\pi i\tau}{l}}))$ (see \cite[page 64–5]{Lang1987}) where $\zeta_l$ is a primitive $l$th root of unity. This fact helps show that the $f\circ\sigma$'s are distinct.\par
If the only pole of $f$ is at $\hat\infty$, then the same is true for $f\circ\sigma$. Writing
$$\prod_{\sigma\in C_{l,N}}(X-f\circ\sigma)=\sum_{k=0}^{\psi(N)}a_k(f)X^k$$
there must exist polynomials $R_k$ such that $a_k(f)-R_k(f)$ is $0$ at $\hat\infty$. Then $a_k(f)-R_k(f)$ would have no poles, and so would be $0$.\end{proof}
\indent Suppose $f$ is a Hauptmodal for $\Gamma$. We outline a way of showing that $\Gamma$ acts on the right of $\{\Gamma\sigma\}_{\sigma\in C_{l,N}}$. The idea is to reduce modulo $l$ via the reduction map $$\pi_l:\operatorname{SL}_2(\mathbb Z)\to\operatorname{SL}_2\mleft(\frac{\mathbb Z}{l\mathbb Z}\mright)$$ and then use a computer.\par
\begin{proposition}\label{act}
Consider the matrices $$\beta=\begin{pmatrix}
    p&q\\
    *&*.
\end{pmatrix}\in{\operatorname{SL}_2\mleft(\frac{\mathbb Z}{l\mathbb Z}\mright)}$$
such that there exist
$$\sigma\in\frac{\left\{\begin{pmatrix}
a&0\\0&d\end{pmatrix}\,\middle|\, a,d\in\left(\frac{\mathbb Z}{l\mathbb Z}\right)^*\right\}
}{\left(\frac{\mathbb Z}{l\mathbb Z}\right)^*},\quad\gamma\in\frac{\pi_l(\Gamma)}{\left(\frac{\mathbb Z}{l\mathbb Z}\right)^*\cap\pi_l(\Gamma)}$$ with  $$\sigma\gamma=:\begin{pmatrix}
    r&s\\
    t&u
\end{pmatrix}$$ such that $\beta$ has bottom row in $({-t},{r})\left(\frac{\mathbb Z}{l\mathbb Z}\right)^*$ and $ps+qu=0$. If every such $\beta$ lies in $\pi_l(\Gamma),$ then $\Gamma$ acts on the right of $\{\Gamma\sigma\}_{\sigma\in C_{l,N}}$ by multiplication for all $N$ coprime to $l$.
\end{proposition}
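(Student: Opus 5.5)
We need to show: for every $\sigma\in C_{l,N}$ and $\gamma\in\Gamma$, there are $\sigma'\in C_{l,N}$ and $\gamma'\in\Gamma$ with $\sigma\gamma=\gamma'\sigma'$. Then right multiplication by $\Gamma$ permutes the cosets $\Gamma\sigma$, since these are distinct by the proof of \autoref{product prop}. The strategy is to build $\sigma'$ explicitly by column-style reduction of $\sigma\gamma$, and then check that the resulting left factor lies in $\Gamma$ by reducing modulo $l$, where the hypothesis on the $\beta$'s applies.

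First, the construction. Write $\sigma\gamma=M$ with $\det M=N$ and $M$ primitive (the gcd of its entries is 1), and let $M$ have bottom row $(t,u)$. Set $g=\gcd(t,u)$ and $t=gt_0$, $u=gu_0$. Choose a bottom row $(-t_0',r_0')$ of a matrix $\beta^{-1}\in\mathrm{SL}_2(\mathbb Z)$ that kills the bottom-left entry of $\beta^{-1}M$. Concretely, $\beta^{-1}$ has bottom row proportional to $(t_0,-?)$; more precisely we want $\beta\in\mathrm{SL}_2(\mathbb Z)$ with $\beta^{-1}M$ upper triangular. The standard Hermite normal form argument then gives $\beta^{-1}M=\begin{pmatrix}a&b'\\0&d\end{pmatrix}$ with $ad=N$, $a,d>0$ and primitive entries. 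Left multiplication by $\begin{pmatrix}1&k\\0&1\end{pmatrix}$ adjusts $b'$ modulo $d$.

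Second, we must respect the level. The target $\sigma'$ needs top-right entry $lb$ with $0\le b<d$. Since $\gcd(d,l)=1$, we can choose $k$ so that $b'+kd\equiv 0\pmod l$ and then reduce modulo $ld$. This gives $M=\gamma'\sigma'$ with $\gamma'\in\mathrm{SL}_2(\mathbb Z)$, and there is freedom to change $\gamma'$ by upper unipotents $\begin{pmatrix}1&n\\0&1\end{pmatrix}$ with $l\mid n$ (times $\pm1$).

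Third, and this is the main step, we show $\gamma'\in\Gamma$. Assuming $\Gamma$ contains $\Gamma(l)$ (it is a congruence subgroup of level $l$), it suffices to show $\pi_l(\gamma')\in\pi_l(\Gamma)$. Reduce $M=\gamma'\sigma'$ modulo $l$. Since $N$ is a unit mod $l$, $\sigma\equiv\operatorname{diag}(a,d)$ is a diagonal unit matrix, and so is $\sigma'\equiv\operatorname{diag}(a',d')$. Hence $\pi_l(\gamma')=\pi_l(\sigma\gamma)\operatorname{diag}(a'^{-1},d'^{-1})$. Write $\pi_l(\sigma\gamma)=\begin{pmatrix}r&s\\t&u\end{pmatrix}$. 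Then $\pi_l(\gamma')$ has bottom row $(t a'^{-1},u d'^{-1})$.

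Next we match this to the hypothesis, which describes $\beta$ via a bottom row in $(-t,r)(\mathbb Z/l)^*$ together with $ps+qu=0$. These conditions say exactly that $\beta^{-1}\cdot(\sigma\gamma)$ is diagonal modulo $l$. Indeed, $\beta^{-1}$ has the form $\begin{pmatrix}*&-q\\ *&p\end{pmatrix}$ up to sign, which gives top-right entry $\pm(\ldots)$ and bottom-left entry $\propto -tr+rt=0$. So the correct object to feed in is $\beta=\pi_l(\gamma')^{\pm1}$ or its transpose variant, and I would check the exact sign/inverse convention by direct 2×2 computation. Quotienting by scalars $(\mathbb Z/l)^*$ is harmless because $\pm1$ and scalar ambiguities cancel in the determinant-one setting. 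Every matrix $\beta$ with $\beta^{-1}\pi_l(\sigma\gamma)$ diagonal arises in this form, so the hypothesis gives $\pi_l(\gamma')\in\pi_l(\Gamma)$, and hence $\gamma'\in\Gamma$.

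The main obstacle is this bookkeeping of conventions: inverse versus transpose, and matching the stated conditions on $(p,q)$ and the bottom row precisely to "$\pi_l(\gamma')^{-1}\pi_l(\sigma\gamma)$ is diagonal". The existence and normalisation of $\sigma'$ is routine Hermite normal form.
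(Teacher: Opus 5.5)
Your route is the paper's route. The paper writes down explicitly $\beta=\begin{pmatrix}p&q\\-t/g&r/g\end{pmatrix}\in\mathrm{SL}_2(\mathbb Z)$ with $g=\gcd(r,t)$. That is the gcd of the \emph{first column} of $\sigma\gamma$, not of the bottom row $(t,u)$ as your first paragraph starts to use. This gives $\beta\sigma\gamma=\begin{pmatrix}g&ps+qu\\0&N/g\end{pmatrix}$; the paper then replaces $\beta$ by $T^k\beta$ to land in $C_{l,N}$ and reduces mod $l$. This $\beta$ is precisely your $\gamma'^{-1}$. Your Hermite normal form step, together with the choice of $k$ making $b'+kd\equiv0\pmod l$, is the same construction.

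The step you single out as the main one is stated incorrectly and then left unresolved. For a unit $c\in(\mathbb Z/l\mathbb Z)^*$,
\[
\begin{pmatrix}p&q\\-ct&cr\end{pmatrix}\begin{pmatrix}r&s\\t&u\end{pmatrix}=\begin{pmatrix}pr+qt&ps+qu\\0&c(ru-st)\end{pmatrix},
\]
so the hypothesis' conditions say that $\beta\,\pi_l(\sigma\gamma)$ is diagonal, not $\beta^{-1}\pi_l(\sigma\gamma)$. In particular, $\pi_l(\gamma')$ itself, whose bottom row you computed as $(ta'^{-1},ud'^{-1})$, is in general not of the required shape, so the hypothesis cannot be applied to it. Apply it instead to
\[
\beta:=\pi_l(\gamma')^{-1}=\operatorname{diag}(a',d')\,\pi_l(\sigma\gamma)^{-1}=N^{-1}\begin{pmatrix}a'u&-a's\\-d't&d'r\end{pmatrix}.
\]
Its bottom row is the unit $N^{-1}d'$ times $(-t,r)$, and its top row $(p,q)$ gives $ps+qu=N^{-1}a'(us-su)=0$. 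The hypothesis then yields $\pi_l(\gamma')^{-1}\in\pi_l(\Gamma)$, hence $\pi_l(\gamma')\in\pi_l(\Gamma)$ because $\pi_l(\Gamma)$ is a group, and so $\gamma'\in\Gamma$ since $\Gamma\supseteq\Gamma(l)$. No transpose is involved.

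The quotients by scalars are harmless, but the reason is not a determinant argument. Both conditions are unchanged when $(r,s,t,u)$ is multiplied by a unit. With these corrections your argument is complete and matches the paper's.
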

\begin{proof}
We follow \cite[Proof of Theorem 1]{Mirokov2009}. For $\sigma\in C_{l,N}$ and $\gamma\in\Gamma$ let $$\sigma\gamma=:\begin{pmatrix}
    r&s\\
    t&u
\end{pmatrix}.$$
Define $$\beta:=\begin{pmatrix}
    p&q\\
    \frac{-t}{\gcd(r,t)}&\frac{r}{\gcd(r,t)}.
\end{pmatrix}\in\operatorname{SL}_2(\mathbb Z).$$
Then $$\beta\sigma\gamma=\begin{pmatrix}
    \gcd(r,t)&ps+qu\\
    0&\frac N{\gcd(r,t)}
\end{pmatrix}.$$
By multiplying $\beta$ on the left by a power of $T$, we can assume this lies in $C_{l,N}$, since multiplication by matrices in $\operatorname{GL}_n(\mathbb Z)$ preserves primitiveness.\par
Any such $\beta$ will lie in $\Gamma$ if and only if $\pi_l(\beta)\in\pi_l(\Gamma)$.\end{proof}
\begin{definition}
The modular function $\gamma_2$ is the cube root of $j$ with $\gamma_2(i)\in\mathbb R$ \cite[page 226]{Cox22}). The Weber modular function $\mathfrak f$ is given by $$\mathfrak f(\tau)=e^{\frac{-\pi i}{24}}\frac{\eta\left(\frac{\tau+1}2\right)}{\eta(\tau)},$$
where $\eta$ is the Dedekind eta function.
\end{definition}
Weber studied $\mathfrak f$ in \cite{Weber1891}.
\begin{proposition}
The modular function $\gamma_2$ is of level 3. The modular function $\mathfrak f$ is of level 48. Both satisfy the conditions of \autoref{product prop}.
\end{proposition}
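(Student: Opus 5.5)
To prove the proposition, I would treat $\gamma_2$ and $\mathfrak f$ separately. For each one I would identify a congruence subgroup $\Gamma$ for which it is a Hauptmodul. I would then verify the action condition of \autoref{product prop} using \autoref{act}, and finally check that the only pole is at $\hat\infty$, so that the coefficients of $\Phi_{f,N}$ are polynomials in $f$.

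For $\gamma_2$, I would use the classical facts from \cite{Cox22}, Chapter 12. The function $\gamma_2$ is holomorphic on $\mathbb H$ and has $q$-expansion $q^{-1/3}(1+\dots)$, so it has level 3. It is invariant under $\Gamma=\{\gamma\in\mathrm{SL}_2(\mathbb Z): \gamma\equiv\pm\begin{pmatrix}a&0\\0&a^{-1}\end{pmatrix}\text{ or }\pm\begin{pmatrix}0&b\\-b^{-1}&0\end{pmatrix}\bmod 3\}$, which is the subgroup on which $\gamma_2\circ\gamma=\gamma_2$. Concretely, $\gamma_2\circ S=\gamma_2$ and $\gamma_2\circ T=\zeta_3^{-1}\gamma_2$, so $\Gamma$ is the kernel of the character $\mathrm{PSL}_2(\mathbb Z)\to\mu_3$ determined by these relations. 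This kernel has index 3 and its quotient $\Gamma\backslash\mathbb H^*$ has a single cusp. Since $\gamma_2^3=j$ and $j$ has degree 1 on $X(1)$, the function $\gamma_2$ has degree 1 on the genus-0 curve $X(\Gamma)$, so it is a Hauptmodul whose only pole is at the cusp $\hat\infty$.

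For $\mathfrak f$, I would use the transformation laws $\mathfrak f\circ T=\zeta_{48}^{-1}\mathfrak f_1$ and $\mathfrak f\circ S=\mathfrak f$, together with the usual $\mathfrak f,\mathfrak f_1,\mathfrak f_2$ relations. From these, $\mathfrak f$ is invariant under a group $\Gamma$ with $\pi_{48}(\Gamma)$ explicitly computable; this is a conjugate of $\Gamma^0(2)$ intersected with the kernel of an order-24 character (cf. \cite{Lang1987}). The $q$-expansion $q^{-1/48}\prod(1+q^{n-1/2})$ shows the level is 48. Moreover $(\mathfrak f^{24}-16)^3/\mathfrak f^{24}=j$ gives $[\mathbb C(\mathfrak f):\mathbb C(j)]=72=[\mathrm{PSL}_2(\mathbb Z):\bar\Gamma]$, so $\mathfrak f$ generates the function field of $X(\Gamma)$ and is a Hauptmodul. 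Since $\eta$ has no zeros on $\mathbb H$, $\mathfrak f$ has no poles in $\mathbb H$, and its poles at cusps come only from the $q^{-1/48}$ behaviour. I would check that $\hat\infty$ is the unique pole, perhaps after observing that the other cusps of $\Gamma$ map to zeros of $\mathfrak f$ (where $\mathfrak f_1$ or $\mathfrak f_2$ has its pole).

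The main obstacle is the action hypothesis. For this I would apply \autoref{act}: enumerate the finitely many $\sigma$ (diagonal units mod $l$) and $\gamma\in\pi_l(\Gamma)$, form all admissible $\beta$, and check by computer that each lies in $\pi_l(\Gamma)$. For $l=3$ this is small enough to do by hand. For $l=48$ I would reduce the work by the Chinese remainder theorem to the cases mod 16 and mod 3, and run the finite computation there. A further subtlety is the pole condition for $\mathfrak f$, namely confirming that $\Gamma$ has only one cusp at which $\mathfrak f$ is infinite. If $\Gamma$ has several cusps, I would verify directly from the transformation formulas that $\mathfrak f$ is holomorphic (indeed vanishing) at each cusp other than $\hat\infty$.
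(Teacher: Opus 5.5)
Your outline has the same skeleton as the paper's proof: exhibit a group $\Gamma$ for which the function is a Hauptmodul, then verify the action hypothesis of \autoref{product prop} by the finite computation of \autoref{act}. The difference is how you obtain $\Gamma$. The paper imports it: $\pi_3(\Gamma)$ comes from \cite{chen99}, and for $\mathfrak f$ the group $TS\Phi_0^0(24)S^{-1}T^{-1}$ comes from \cite{yay15}; the paper then generates its image mod $48$ from Reidemeister--Schreier generators of $\Gamma_0(2)'$. You derive $\Gamma$ yourself as the kernel of a character ($S\mapsto1$, $T\mapsto\zeta_3^{-1}$ on $\mathrm{SL}_2(\mathbb Z)$; $S\mapsto1$, $T^2\mapsto\zeta_{24}^{-1}$ on $\Gamma_\theta=\langle S,T^2\rangle$). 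You then get the Hauptmodul property by matching $[\mathbb C(f):\mathbb C(j)]$ (namely $3$, resp.\ $72$) with the index. This is more self-contained, and it gives you the computer input directly. Since $\Gamma_\theta/\{\pm I\}$ is the free product of $\langle S\rangle\cong\mathbb Z/2\mathbb Z$ and $\langle T^2\rangle\cong\mathbb Z$, the kernel is generated by $T^{2k}ST^{-2k}$ ($0\le k<24$), $T^{48}$ and $-I$. You also address the pole condition, which the paper leaves implicit. It is in fact automatic: a Hauptmodul has a single simple pole, and the expansions $q^{-1/3}(1+\cdots)$, $q^{-1/48}(1+\cdots)$ place it at $\hat\infty$. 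Your parenthetical has it backwards: at the other cusp $1$ one gets $\mathfrak f\circ TS=\zeta_{48}^{-1}\mathfrak f_2$, and $\mathfrak f_2$ \emph{vanishes} at $\infty$.

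One statement must be corrected. Your explicit congruence description of the $\gamma_2$-group is not the invariance group. Mod $3$ one has $a^{-1}=a=\pm1$, so your list is just $\{\pm I,\pm S\}$, of order $4$, which gives an index-$6$ subgroup. The kernel of your character is the preimage of the order-$8$ quaternion group listed in the paper. For example, $TST^{-1}=\begin{pmatrix}1&-2\\1&-1\end{pmatrix}\equiv\begin{pmatrix}1&1\\1&-1\end{pmatrix}\pmod 3$ has character value $1$ but is missing from your list. $\gamma_2$ is not a Hauptmodul for the smaller group, and running \autoref{act} on it checks the wrong statement, so use the character kernel throughout.

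Two smaller points. First, the exponents $q^{-1/3}$, $q^{-1/48}$ give only the cusp width at $\infty$, not the level. Level $3$ holds because your character factors through $\mathrm{SL}_2(\mathbb Z/3\mathbb Z)$. Level $48$ is the classical fact that the Weber functions lie in $\mathcal F_{48}$, and this is what licenses \autoref{act} with $l=48$. Second, your CRT reduction to moduli $16$ and $3$ needs $\pi_{48}(\Gamma)$ to be the product of its two reductions. This does hold here: the $\mu_8$-part of the $\mathfrak f$-character is trivial on $\Gamma(16)$, since $\mathrm{SL}_2(\mathbb Z/3\mathbb Z)$ has abelianisation $\mathbb Z/3\mathbb Z$, while the $\mu_3$-part cuts out $\Gamma_\theta$ intersected with the $\gamma_2$-group. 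It should still be stated.
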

\begin{proof}
\cite[Proposition 4.1]{chen99} shows that $\gamma_2$ is a Hauptmodul for the subgroup $\Gamma$ of $\mathrm{SL}_2(\mathbb Z)$ such that $$\pi_3(\Gamma)=\left\{\begin{pmatrix}
    0&2\beta\\
    \beta&0
\end{pmatrix},\begin{pmatrix}
    \alpha&0\\
    0&\alpha
\end{pmatrix},\begin{pmatrix}
    \alpha&\beta\\
    \beta&-\alpha
\end{pmatrix}\;\middle|\;\alpha,\beta\in\left(\frac{\mathbb Z}{3\mathbb Z}\right)^*\right\}.$$ We then use PARI/GP to check \autoref{act}.\par
Let\[
B=ST^2S^{-1}=
\begin{pmatrix}
1 & 0 \\
-2 & 1
\end{pmatrix}.
\]
\cite[Theorem 4.4]{yay15} shows that $\mathfrak f$ is a Hauptmodul for the subgroup $TS\Phi_0^0(24)S^{-1}T^{-1}$ where $\Phi_s^0(M)=\langle\Gamma_0(2)', T^M, B^M, TB^{s+1}\rangle$. Let $$S_2:=\begin{pmatrix}
    1&-1\\
    2&-1
\end{pmatrix}=BT^{-1}.$$ Since $T$ and $B$ generate $\Gamma_0(2)$, it follows that $T$ and $S_2$ do as well. Note that $S_2^2=-I$. Since a transversal of $\Gamma_0(2)'$ in $\Gamma_0(2)$ is given by $\Gamma_0(2)^{\text{ab}}$, by the Reidemeister--Schreier method we find that $\Gamma_0(2)'$ is generated by $\{T^nS_2TS_2T^{-n-1}\}_{n\in\mathbb Z}$ and $-I$. We can then generate $\pi_{48}(\Gamma_0(2)')$ by only considering ${n\in\frac{\mathbb Z}{48\mathbb Z}}$. PARI/GP then uses 7 minutes to check \autoref{act}.\end{proof}

\section{Bounds involving \texorpdfstring{$\ln\ln N$}{lnlnN}}
We follow \cite{BrPa22}, replacing key lemmas involving $j$ and $\Delta$ with our modular functions and sufficient controlling functions. In the case of $\gamma_2,$ the controlling function we use is simply a cube root of $\Delta$, i.e. the eighth power of the Dedekind eta function $\eta$. The $\mathfrak f$ case is complicated by the fact that it has one more cusp for our controlling function $g$ to control, but a quotient of $\eta$'s still works.
\subsection{The modular function \texorpdfstring{$\gamma_2$}{g2}}
Let $\tau=x+iy$ for $x,y\in\mathbb R$. We replace \cite[Equation 12]{BrPa22} with:
\begin{lemma}\label{im}
The imaginary part of $\tau$ is bounded by $y\allowbreak\le\frac3{2\pi}\ln\left(|\gamma_2(\tau)|+9.902\right)$.
\end{lemma}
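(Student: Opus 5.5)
The plan is to prove the bound for $\tau$ in the standard fundamental domain $\mathcal F=\{|x|\le\frac12,\ |\tau|\ge1\}$, so that $y\ge\frac{\sqrt3}2$. This is the only sensible reading: $|\gamma_2|=|j|^{1/3}$ is $\mathrm{SL}_2(\mathbb Z)$-invariant, so no bound on $y$ can hold on all of $\mathbb H$, just as for \cite[Equation 12]{BrPa22}. The approach is to compare $\gamma_2$ with the leading term of its $q$-expansion, $q=e^{2\pi i\tau}$, $|q|=e^{-2\pi y}$. The lemma is equivalent to
\[
e^{2\pi y/3}-|\gamma_2(\tau)|\le 9.902 ,
\]
and by the reverse triangle inequality it suffices to bound $\bigl|\gamma_2(\tau)-q^{-1/3}\bigr|$ by $9.902$ on $\mathcal F$ (with the branch of $q^{1/3}$ matching $\gamma_2(i)\in\mathbb R$).

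First I will write $\gamma_2=E_4/\eta^8$. This holds because $j=E_4^3/\Delta$ with $\Delta=\eta^{24}$, and both sides are real at $\tau=i$, which fixes the cube root. Hence
\[
q^{1/3}\gamma_2=\frac{E_4(\tau)}{\prod_{n\ge1}(1-q^n)^8}=1+\sum_{n\ge1}c_nq^n ,
\]
with integer $c_n$: $c_1=248$, $c_2=4124$, $c_3=34752,\dots$. Then
\[
\bigl|\gamma_2-q^{-1/3}\bigr|\le \sum_{n\ge1}|c_n|\,e^{-2\pi y(n-\frac13)} .
\]
Each term is decreasing in $y$, so the supremum over $\mathcal F$ is attained at $y=\frac{\sqrt3}2$. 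There the terms are $e^{-\pi\sqrt3(n-\frac13)}$ times $|c_n|$. Numerically, the first three terms contribute about $248\cdot0.0266\approx6.59$, $4124\cdot1.15\cdot10^{-4}\approx0.47$ and $34752\cdot5\cdot10^{-7}\approx0.02$, for a total of roughly $7.1$. This leaves a comfortable margin below $9.902$.

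The main obstacle is rigorously controlling the tail $\sum_{n\ge N_0}|c_n|e^{-\pi\sqrt3(n-1/3)}$; the head is finite arithmetic, best done in PARI/GP. Rather than asymptotics of the $c_n$, I will use a crude coefficientwise majorant. For this I will write $\prec$ for coefficientwise domination of power series with nonnegative coefficients. The coefficients of $E_4$ satisfy $240\sigma_3(n)\le240\zeta(3)n^3$, which gives
\[
|E_4|\prec 1+240\zeta(3)\sum_n n^3q^n ,
\]
and for the denominator $\prod(1-q^n)^{-8}\prec\prod(1-q^n)^{-8}$, whose coefficients are the nonnegative counts $p_8(n)$. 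Evaluating both majorants at $|q|=e^{-\pi\sqrt3}\approx0.0043$ is legitimate since all coefficients are nonnegative. Because $|q|$ is so small, it then suffices to bound $|c_n|\le C\cdot A^n$ for some modest $A$, for instance $A=50$ from $p_8(n)\le\binom{n+7}{7}\cdot$const and the $n^3$ growth of $\sigma_3$. This makes the geometric tail beyond $n=4$ or $5$ smaller than $10^{-3}$. Combining the computed head with this tail bound gives $e^{2\pi y/3}\le|\gamma_2(\tau)|+9.902$, and taking logarithms yields the lemma.
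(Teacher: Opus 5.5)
Your argument is sound but takes a different route from the paper. The paper proves nothing new here. It quotes Pazuki's bound for $j$, $e^{2\pi y}\le|j(\tau)|+970.8$, which comes from splitting at the $y_0$ with $j(iy_0)=2e^{2\pi y_0}$ (the same device the paper uses for $\mathfrak f$). It then takes cube roots using $(a+b)^{1/3}\le a^{1/3}+b^{1/3}$; the constant $9.902$ is just $970.8^{1/3}$ rounded up.

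You instead expand $\gamma_2=E_4/\eta^8$ directly, which is self-contained and sharper. Since all $c_n\ge0$, your majorant equals $\gamma_2(iy)-e^{2\pi y/3}$ exactly. Its maximum on $\mathcal F$ is therefore $\gamma_2(i\sqrt3/2)-e^{\pi\sqrt3/3}\approx13.22-6.13\approx7.09$, which matches your head computation. The paper's reduction is a one-liner, but it loses in the constant by cube-rooting the $j$-bound.

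Two side claims in your write-up are wrong, though neither breaks the argument.

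First, it is not true that no bound on $y$ can hold on all of $\mathbb H$. For $\tilde\tau\in\mathcal F$ and $\gamma\in\mathrm{SL}_2(\mathbb Z)$ with bottom row $(c,d)$, one has $\operatorname{Im}\gamma\tilde\tau=\operatorname{Im}\tilde\tau/|c\tilde\tau+d|^2\le\operatorname{Im}\tilde\tau$. So an upper bound for $y$ on $\mathcal F$ in terms of the invariant $|\gamma_2|$ automatically holds on the whole orbit, i.e. on all of $\mathbb H$. Moreover, your series estimate $|\gamma_2(\tau)-q^{-1/3}|\le\sum_n c_ne^{-2\pi y(n-1/3)}$ is valid for every $\tau\in\mathbb H$. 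Since the lemma carries no domain restriction, you should add this one line.

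Second, $p_8(n)\le\mathrm{const}\cdot\binom{n+7}{7}$ is false. $\binom{n+7}{7}$ is the $q^n$-coefficient of $(1-q)^{-8}$, so $p_8(n)\ge\binom{n+7}{7}$, and in fact $p_8(n)$ grows like $e^{c\sqrt n}$. A geometric bound $c_n\le r^{-n}\sum_k c_kr^k$ for any $0<r<1$ holds trivially because $c_k\ge0$. But you can skip coefficient bounds entirely by applying your own majorants to the whole series rather than the tail. At $r=e^{-\pi\sqrt3}$, the quantity $e^{\pi\sqrt3/3}\bigl((1+240\zeta(3)\sum_{n\ge1}n^3r^n)\prod_{n\ge1}(1-r^n)^{-8}-1\bigr)$ is about $8.44<9.902$.
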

\begin{proof}
This is a rearrangement of \cite[Lemma 2.5]{Paz19}.
\end{proof}
For any function $f:\mathbb H\to\mathbb C$ and any $\sigma\in C_{l,N}$ let $f_\sigma:=f(\sigma(\tau))$ and $f^{ n}(\tau):=(f(\tau))^n$. We replace \cite[Equation 13]{BrPa22} with:
\begin{lemma}\label{eta}
For $\tau\in\mathcal F$, we have $\begin{aligned}
-1.9<f(\tau):=\ln\max(|\eta^{8}(\tau)|,|\gamma_2(\tau)\eta^{8}(\tau)|)
<0.376.
\end{aligned}$
\end{lemma}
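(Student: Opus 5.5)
We need bounds for $f(\tau)=\ln\max(|\eta^8(\tau)|,|\gamma_2(\tau)\eta^8(\tau)|)$ on the standard fundamental domain $\mathcal F=\{|x|\le\frac12,\ |\tau|\ge1\}$. The plan is to work with $q=e^{2\pi i\tau}$. On $\mathcal F$ we have $|q|\le e^{-\pi\sqrt3}\approx 0.00433$. Write
$$\eta^8(\tau)=q^{1/3}\prod_{n\ge1}(1-q^n)^8,\qquad \gamma_2\eta^8=(j\Delta)^{1/3}=E_4(\tau),$$
using $\gamma_2^3=j=E_4^3/\Delta$ and $\eta^{24}=\Delta$. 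The last identity holds up to a cube root of unity, which is irrelevant for absolute values. Hence
$$f(\tau)=\ln\max\bigl(|\eta(\tau)|^8,|E_4(\tau)|\bigr).$$
The key observation is that $E_4=1+240\sum\sigma_3(n)q^n$ is bounded and close to $1$ on $\mathcal F$. The term $|\eta^8|$ is at most about $|q|^{1/3}\le e^{-\pi\sqrt3/3}\approx 0.163$, so it never dominates at the top end.

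For the upper bound, we estimate $|E_4|\le 1+240\sum_n\sigma_3(n)|q|^n$ with $|q|\le e^{-\pi\sqrt3}$. The first term alone gives $240\cdot0.00433\approx1.04$. With the tail, $|E_4|\le 1+240\bigl(0.00433+9(0.00433)^2+\dots\bigr)\approx 2.08\ldots$. Then $\ln(2.08\ldots)\approx0.73$, which is too big for the target $0.376$, i.e.\ $|E_4|<e^{0.376}\approx1.456$.

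This crude bound is therefore not enough, and this is the main obstacle. I would instead use the fact that $|E_4|$ is subharmonic, so its maximum over the closure of the relevant region is attained on the boundary, and handle it in two regions. For large $y$ (say $y\ge y_0$), the crude series bound with $|q|\le e^{-2\pi y_0}$ already gives $|E_4|\le 1.456$. This needs roughly $240e^{-2\pi y_0}\lesssim0.45$, i.e.\ $y_0\approx1.0$. On the compact piece $\sqrt3/2\le y\le y_0$ of $\mathcal F$, the true values are small: $E_4(\rho)=0$, and $E_4(i)\approx1.4558$. Since $\ln 1.4558\approx 0.3755$, the constant $0.376$ is exactly tuned to the point $\tau=i$. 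So on this piece I would do a rigorous numerical verification on the boundary arc $|\tau|=1$ and the vertical sides. Truncating the $q$-series with an explicit tail bound, combined with a Lipschitz or derivative bound on a fine grid, gives $\max|E_4|$ attained at $\tau=i$. Alternatively one can argue monotonicity along the arc via the $q$-expansion. The $|\eta^8|$ term is below $0.17$ everywhere there, so it does not affect the upper bound.

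For the lower bound, $f\ge\ln|\eta^8(\tau)|$, and we need $|\eta^8|>e^{-1.9}\approx0.1496$ whenever $|E_4|$ is also small. The difficulty is that $|\eta^8|\approx e^{-2\pi y/3}$ decays as $y$ grows. In that regime, however, $|E_4|\ge 1-240\sum\sigma_3(n)|q|^n$ is close to $1$. So I split again. For $y\ge y_1$ (e.g.\ $y_1=1$) we have $|E_4|\ge 1-240e^{-2\pi}(1+\dots)\approx0.55>e^{-1.9}$. For $\sqrt3/2\le y\le y_1$ we use
$$|\eta^8|\ge e^{-2\pi y/3}\prod(1-|q|^n)^8\ge e^{-2\pi/3}\cdot(1-0.0044)^{8}\cdot(\text{tail})\approx0.123\cdot0.965.$$
This is too small near $y=1$, so we must overlap the regions more carefully. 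I would choose the split point so that both estimates exceed $0.1496$. At $y=0.95$, $|\eta^8|\gtrsim e^{-1.99}\cdot0.96\approx0.13$, still short. Hence near $\tau=\rho$, where $E_4$ vanishes, we rely on $|\eta^8(\rho)|\approx e^{-2\pi(\sqrt3/2)/3}\approx0.163$ and on $E_4$ growing away from $\rho$. I would settle the intermediate band by the same rigorous grid computation of $\max(|\eta^8|,|E_4|)$ with explicit truncation errors. The expected tightest point is near $\rho$, where $\ln0.163\approx-1.81>-1.9$.
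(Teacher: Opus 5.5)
Your argument is correct, but it re-derives the bound from scratch, whereas the paper's proof is a one-line citation. Since $|\eta^8|^3=|\Delta|$ and $|\gamma_2\eta^8|^3=|j\Delta|$, one has $f=\frac13\ln\max(|\Delta|,|j\Delta|)$, and the paper simply divides the bounds of \cite[Lemma 2.3]{BGP23} for this quantity on $\mathcal F$ by $3$. Your identity $\gamma_2\eta^8=E_4$ (up to a cube root of unity) is exactly that observation, so you could have stopped there and quoted the known bound.

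What your route buys is transparency about the constants. The upper constant $0.376$ is $\ln E_4(i)\approx0.37553$ rounded up; your series bound for $y\ge1$ is sharp at $\tau=i$, where every term is positive. The lower constant $-1.9$ comes from the region around the zero of $E_4$ at $\rho$.

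The price is a certified computation with thin margins. Near $i$ the slack is only about $5\cdot10^{-4}$ in the logarithm, so the arc check needs a very fine mesh or a second-order argument at the critical point $i$.

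For the lower bound, $\ln\max(|\eta^8|,|E_4|)$ is subharmonic and so has no minimum principle. Your grid must therefore cover the whole band $\frac{\sqrt3}2\le y\le1$ of $\mathcal F$, not just its boundary. Equivalently, you may check $\partial\mathcal F$ together with the curve $|\gamma_2|=1$, which is the only place an interior local minimum can occur.

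Also, the minimum is not at $\rho$ itself. There $|\eta^8(\rho)|\approx0.169$ rather than $0.163$, because $\prod|1-q^n|^8>1$ for $q<0$. The minimum is instead at the point of the sides $x=\pm\frac12$ where $|E_4|=|\eta^8|$, near $y\approx0.894$, with value about $-1.84$ rather than $-1.81$. This is still above $-1.9$, so your plan goes through.
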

\begin{proof}
The lemma follows from \cite[Lemma 2.3]{BGP23}.
\end{proof}
Let $S_{\gamma_2,N}(\tau):=\sum_{\sigma\in C_{3,N}}\ln\max(1,|(\gamma_2)_\sigma|)$.
\begin{lemma}\label{c_n}
Fix $\tau\in\mathbb H$. Let
$$
\begin{aligned}
a_{\gamma_2}(\tau)&:=0.32-\frac{1}{2} \ln(|\eta^{8}(\tau)|y^2) \\
b_{\gamma_2}(\tau)&:=0.376-\ln |\eta^{8}(\tau)|.
\end{aligned}
$$
Suppose $N_0\in\mathbb R$ with $N>N_0 \geq{e^e}$. For all integers $n \geq 0$, let
\begin{align*}
c_0(\tau) &:=a_{\gamma_2}(\tau)+\ln \left(4+\frac{b_{\gamma_2}(\tau)}{\ln N_0}\right) \\
c_{n+1}(\tau) &:=a_{\gamma_2}(\tau)+\ln 2+\ln \left(1+\frac{\ln \ln N_0+c_n(\tau)}{\ln N_0}\right).
\end{align*}
Then
\begin{align*}
S_{\gamma_2,N}(\tau) \leq 2\psi(N)\left(\ln N-2 \lambda_N+\ln \ln N+c_n(\tau)\right)\numberthis\label{explicit}
\end{align*}
for all $n$ up to which $c_n(\tau)\ge0$.
\end{lemma}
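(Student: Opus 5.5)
The plan is to transplant the argument of \cite{BrPa22} (their Lemma on $S_N$ with iterated constants $c_n$) to $\gamma_2$, using \autoref{im} in place of their bound on $y$ in terms of $|j|$, and \autoref{eta} in place of their two-sided bound on $\ln\max(|\Delta|,|j\Delta|)$. The induction runs on $n$: first prove \eqref{explicit} with $c_0$, then show that knowing \eqref{explicit} with $c_n$ (valid for all $N>N_0$ and all $\tau$) yields it with $c_{n+1}$.

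\textbf{Base case.} For each $\sigma\in C_{3,N}$, pick $\gamma\in\mathrm{SL}_2(\mathbb Z)$ with $\gamma\sigma\tau\in\mathcal F$. Since $|\gamma_2|$ and $|\eta^8|$ transform well under $\mathrm{SL}_2(\mathbb Z)$ (the latter with the automorphy factor $|c\tau+d|^{4}$), the quantity $\ln|\eta^8(\sigma\tau)|+2\ln\operatorname{Im}(\sigma\tau)$ is $\mathrm{SL}_2(\mathbb Z)$-invariant. Then \autoref{eta}, applied at the reduced point, gives
\[
\ln\max(1,|(\gamma_2)_\sigma|)\le 0.376-\ln|\eta^8(\sigma\tau)|.
\]
We then sum over $\sigma$. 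The sum $\sum_\sigma\ln|\eta^8(\sigma\tau)|$ is $\tfrac13\ln$ of $|\Delta|$ evaluated on the Hecke orbit, i.e. the norm of $\Delta$ under $T_N$. Exactly as in \cite{BrPa22}, we express $\sum_\sigma\ln(|\Delta(\sigma\tau)|\operatorname{Im}(\sigma\tau)^6)$ through the Hecke-invariant average. This uses the formula $\sum_{\sigma}\ln\operatorname{Im}\sigma\tau=\psi(N)(\ln y-\ln N+2\lambda_N\text{-type term})$ together with the fact that $-\ln(|\Delta|y^6)$ is bounded on the Hecke orbit in terms of $\ln\operatorname{Im}$ of the reduced points. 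The bounded contribution is what produces the term $\tfrac12\ln N$ and the factor $2\psi(N)(\ln N-2\lambda_N)$. The remaining term, $\sum_\sigma\ln\operatorname{Im}(\gamma\sigma\tau)$, is bounded by the number of $\sigma$ with large reduced imaginary part; \cite{BrPa22} controls this via a counting argument giving at most $\psi(N)(\ln N+\ldots)$. Collecting terms produces $a_{\gamma_2}(\tau)$, where $0.32$ absorbs the constants from the $\eta$-normalisation, and $\ln(4+b_{\gamma_2}/\ln N_0)$ after dividing by $2\psi(N)$ and using $\ln N>\ln N_0\ge1$. The rescaling from $\Delta$ to $\eta^8$ is a cube root, which is why the leading factor is $2\psi(N)$ rather than $6\psi(N)$.

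\textbf{Inductive step.} The idea is to feed the bound back in. In \cite{BrPa22} the crude step bounds the reduced imaginary parts by something like $\ln N$. Instead, we use \autoref{im}:
\[
\operatorname{Im}(\gamma\sigma\tau)\le\tfrac{3}{2\pi}\ln(|(\gamma_2)_\sigma|+9.902).
\]
Then $\ln(|(\gamma_2)_\sigma|+9.902)\le\ln\max(1,|(\gamma_2)_\sigma|)+\ln 10.902$. Applying Jensen (concavity of $\ln$) to $\sum_\sigma\ln\operatorname{Im}$ bounds it by $\psi(N)\ln\bigl(S_{\gamma_2,N}/\psi(N)+\text{const}\bigr)$. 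Inserting \eqref{explicit} with $c_n$ gives
\[
\psi(N)\ln\bigl(2(\ln N)(1+(\ln\ln N+c_n)/\ln N)\bigr),
\]
where the $\ln N$ produces the $\ln\ln N$ term and the remainder produces $\ln 2+\ln(1+(\ln\ln N_0+c_n)/\ln N_0)$. For the last point we use that $x\mapsto(\ln\ln x+c)/\ln x$ is decreasing for $x\ge e^e$ when $c\ge0$; this is where $N_0\ge e^e$ and $c_n\ge0$ enter.

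The main obstacle is checking that the numerical constants ($0.32$, $0.376$, and the $9.902$ absorbed into $a_{\gamma_2}$) match correctly once one passes from $\Delta$ to $\eta^8$ and from $j$ to $\gamma_2$. In particular, the constant $\ln(10.902\cdot\tfrac{3}{2\pi})$ must be absorbed without spoiling the clean form of $c_{n+1}$. I would first redo \cite{BrPa22}'s bookkeeping symbolically and only then substitute the $\gamma_2$ constants.
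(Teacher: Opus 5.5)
Your overall plan is the route the paper takes: induction on $n$, Jensen's inequality combined with \autoref{im}, and the monotonicity of $x\mapsto(\ln\ln x+c)/\ln x$ on $[e^e,\infty)$ for $c\ge0$. The paper's proof is just a pointer to \cite{BrPa22} plus exactly that monotonicity remark, and you correctly locate where $c_n\ge0$ and $N_0\ge e^e$ enter. The gap is in your base case.

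First, your displayed inequality is false as written. \autoref{eta} applied at $\widetilde{\tau_\sigma}$ only gives it with the extra nonnegative term $2\ln\operatorname{Im}\widetilde{\tau_\sigma}-2\ln\operatorname{Im}(\sigma\tau)$. Summing over $\sigma$, and using the exact identities $\sum_\sigma\ln|\eta^8(\sigma\tau)|=\psi(N)\ln|\eta^8(\tau)|$ and $\sum_\sigma\ln\operatorname{Im}(\sigma\tau)=\psi(N)(\ln y-\ln N+2\lambda_N)$, you get
\[
S_{\gamma_2,N}(\tau)\le\psi(N)\bigl(0.376+2(\ln N-2\lambda_N)-\ln(|\eta^8(\tau)|y^2)\bigr)+2\sum_{\sigma}\ln\operatorname{Im}\widetilde{\tau_\sigma}.
\]
Suppose you now bound the last sum linearly by $\psi(N)(\ln N+O(1))$, as you propose. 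Then you only obtain $S_{\gamma_2,N}(\tau)\le 2\psi(N)(2\ln N+O(1))$. The surplus $\ln N$ cannot be absorbed into $\ln\ln N+c_0$, and no term $\ln(4+b_{\gamma_2}/\ln N_0)$ can arise from ``dividing by $2\psi(N)$''.

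The linear estimate should only be an input. From $\operatorname{Im}\widetilde{\tau'}\le\max(\operatorname{Im}\tau',1/\operatorname{Im}\tau')$ one gets $\frac1{\psi(N)}\sum_\sigma\ln\operatorname{Im}\widetilde{\tau_\sigma}\le\ln N+|\ln y|$. No counting is needed here; a genuine Farey count as in Section 4 would give $O(\psi(N))$ and make the whole iteration superfluous. Inserting this gives essentially $S_{\gamma_2,N}(\tau)\le\psi(N)X_0$ with $X_0:=4\ln N+b_{\gamma_2}(\tau)$, exactly so for $y\ge1$.

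Now apply your Jensen step already at this stage. For any upper bound $X$ of $S_{\gamma_2,N}(\tau)/\psi(N)$,
\[
\sum_\sigma\ln\operatorname{Im}\widetilde{\tau_\sigma}\le\psi(N)\ln\Bigl(\tfrac{3}{2\pi}\bigl(X+\ln 10.902\bigr)\Bigr)\le\psi(N)\Bigl(\ln\bigl(\tfrac{3}{2\pi}\ln 10.902\bigr)+\ln X\Bigr),
\]
where the second step holds because $X\ge\ln 10.902/(\ln 10.902-1)\approx1.72$. Since $\ln(\frac{3}{2\pi}\ln 10.902)\approx0.1316$ and $0.376/2+0.1316\approx0.32$, this is exactly where $a_{\gamma_2}$ comes from. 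Taking $X=X_0$ gives $c_0$, using $b_{\gamma_2}(\tau)\ge0$ and $N>N_0$. Taking $X=2(\ln N+\ln\ln N+c_n)$ gives $c_{n+1}$ after your monotonicity step.

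So the constant that must be absorbed is $\ln(\frac{3}{2\pi}\ln 10.902)$, not $\ln(10.902\cdot\frac{3}{2\pi})\approx1.65$. The latter would force roughly $1.84$ in place of $0.32$ in $a_{\gamma_2}$. This also resolves the ``main obstacle'' you left open.
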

\begin{proof} This is proved the same way as in \cite[Proof of Theorem 1.1, only up to Lemma 3.1]{BrPa22}, though we note that \cite{BrPa22} omitted the fact that we need $c_n(\tau)\ge0$. We need this since then
the function
$$x\mapsto\frac{\ln x+c_n(\tau)}x$$
is decreasing on $[e,\infty[$, so
\begin{align*}\ln \ln N+c_n(\tau)<\left(\frac{\ln \ln N_0+c_n(\tau)}{\ln N_0}\right) \ln N.\altqedhere\end{align*}\end{proof}

The interpolation lemma \cite[Lemma 20]{BrSu10} gives—for a real number $L>1$—
\begin{align*}
h\left(\Phi_{\gamma_2,N}\right) &\leq \max _{k\in[0,\psi(N)]\cap\mathbb Z} h(\Phi_{\gamma_2,N}(X,\gamma_2(\tau_{k,N})))+\psi(N)\left(\frac{\ln L+1}{L}+3 \ln 2\right)\\
&\leq \max _{k\in[0,\psi(N)]\cap\mathbb Z} S_{\gamma_2,N}(\tau_{k,N})+\psi(N)\left(\frac{\ln L+1}{L}+4 \ln 2\right).
\numberthis\label{S_N}
\end{align*}
for any choice of $\tau_{k,N}$ such that $\gamma_2(\tau_{k,N})=L\left(1+\frac k{\psi(N)}\right).$ Let $
\Gamma:=\left\{e^{i \theta} \left\lvert\, \frac{\pi}{3} \leq \theta \leq \frac{\pi}{2}\right.\right\} \cup\{i x \mid x \in[1, \infty)\}.
$ It is well known that $j: \Gamma \to[0, \infty)$ is a bijection. Since $\gamma_2$ is a continuous cube root of $j$, it follows that $\gamma_2: \Gamma \to[0, \infty)$ is also a bijection. So we can choose $\tau_{k,N}\in\Gamma$.\par
Let $N_0=20$. Using SageMath with 2 minutes of computation we obtain $\max_{2\le_{\mathbb Z}N\le N_0}B(N)\le 3.86$ where
$$\max _{k\in[0,\psi(N)]\cap\mathbb Z} h(\Phi_{\gamma_2,N}(X,\gamma_2(\tau_{k,N})))+\psi(N)\left(\frac{\ln L+1}{L}+3 \ln 2\right)=2\psi(N)(\ln N-2\lambda_N+\ln\ln N+B(N)),$$
using the fact that $\Phi_{\gamma_2,N}(X,\gamma_2(\tau_{k,N}))=\prod_{\sigma\in C_{3,N}}(X-\gamma_2(\sigma\tau_{k,N}))$. We optimised $L$ to be $2.54$ to reduce our computed upper bound as much as possible.\par
Continuing from \eqref{S_N}, by \autoref{c_n} we get—for $N>N_0$—
$$
h( \Phi_{\gamma_2,N} ) \leq2 \psi( N ) \left( \ln N-2 \lambda_{N}+\ln \ln N+\sup _{r\in[0,1]} c_n( \tau_r)\right)+\psi( N ) \left( \frac{\ln L+1} {L}+4 \ln 2\right)$$
for any choice of $\tau_r$ such that $\{\gamma_2(\tau_r)\mid r\in[0,1]\}=[L,2L]$. Using SageMath 10.4 we plot $c_n(\tau_r)$ for $r\in[0,1]$ and get \autoref{gamma} with $n=2$ ($n=1$ resulted in a constant of $4.55$) and optimising $L$ to be $6$.

\subsection{The Weber modular function \texorpdfstring{$\mathfrak f$}{f}}
Let $h(\tau):=e^{-\frac{\pi}{24}i\tau}$ and $w:=e^{\pi i\tau}$. Then $\mathfrak f(\tau)=h(\tau)\prod_{n=0}^{\infty}(1+w^{2n+1})$.\par
\begin{lemma}\label{imf}
The imaginary part of $\tau$ is bounded by $y\allowbreak\le\frac{24}{\pi}\ln\left(|\mathfrak f(\tau)|+1.03\right)$.
\end{lemma}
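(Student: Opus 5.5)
The plan is to compare $|\mathfrak f(\tau)|$ with the size $|h(\tau)|=e^{\pi y/24}$ of its leading factor. The inequality is equivalent to
$$e^{\pi y/24}\le|\mathfrak f(\tau)|+1.03,$$
so it suffices to show that $e^{\pi y/24}-|\mathfrak f(\tau)|\le 1.03$. The reverse triangle inequality applied to each factor of the product expansion $\mathfrak f(\tau)=h(\tau)\prod_{n\ge0}(1+w^{2n+1})$ gives, with $q:=|w|=e^{-\pi y}$,
$$|\mathfrak f(\tau)|\ge e^{\pi y/24}\prod_{n=0}^\infty\bigl(1-q^{2n+1}\bigr)=:e^{\pi y/24}P(q).$$
Hence
$$e^{\pi y/24}-|\mathfrak f(\tau)|\le e^{\pi y/24}\bigl(1-P(q)\bigr).$$

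Next I bound $1-P(q)$ from above. The elementary inequality $1-\prod(1-a_n)\le\sum a_n$ for $a_n\in[0,1]$ gives
$$1-P(q)\le\sum_{n\ge0}q^{2n+1}=\frac{q}{1-q^2}.$$
Therefore it is enough to prove
$$g(y):=\frac{e^{\pi y/24}e^{-\pi y}}{1-e^{-2\pi y}}=\frac{e^{-23\pi y/24}}{1-e^{-2\pi y}}\le 1.03 .$$
The function $g$ is decreasing in $y$: its numerator decreases and its denominator increases. So the verification reduces to a single numerical check at the smallest value of $y$ allowed for $\tau$.

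The main obstacle is exactly this range of $\tau$. The statement can only hold on a region bounded away from the real axis, since as $y\to0$ the right-hand side tends to $\frac{24}{\pi}\ln 1.03>0$, while $|\mathfrak f|$ need not be large there. I would therefore read the lemma, as in \autoref{im}, as applying to $\tau$ in the relevant fundamental domain, that is, the region where the $\tau_{k,N}$ and $\tau_r$ of the subsequent argument are chosen.

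If that region has $y\ge\frac{\sqrt3}{2}$, then $g(\sqrt3/2)\approx0.074$, and the lemma follows with a large margin. The constant $1.03$ suggests that the intended region goes lower, because $\mathfrak f$ is invariant only under a subgroup of larger index. In that case the crude bound $q/(1-q^2)$ is too lossy (at $y=0.1$ it already exceeds $1$). I would then bound $1-P(q)$ more sharply, either by evaluating $\ln P(q)=\sum_n\ln(1-q^{2n+1})$ numerically with a truncation estimate, or by using the identity $P(q)=\eta(\tau/2+\ldots)$-type eta-quotient expressions to compute $P$ exactly. I would then check monotonicity of $e^{\pi y/24}(1-P(e^{-\pi y}))$ and verify $\le1.03$ at the endpoint $y_0$ by a certified numerical evaluation, as the paper does elsewhere with SageMath.
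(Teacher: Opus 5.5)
There is a genuine gap, and it is exactly the ``main obstacle'' you identify, which is not actually an obstacle. Your claim that the lemma can only hold away from the real axis has the logic reversed. Since $|\mathfrak f(\tau)|\ge0$, the right-hand side is always at least $y_*:=\frac{24}{\pi}\ln 1.03\approx0.226$. So for $y\le y_*$ the inequality holds no matter how small $|\mathfrak f(\tau)|$ is, and the lemma is true on all of $\mathbb H$.

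This matters for how the lemma is used. In the $\mathfrak f$ section the reduced points lie in $\mathcal F_\theta$, not on the arc and imaginary axis where $\tau_{k,N}$ and $\tau_r$ are chosen. The closure of $\mathcal F_\theta$ meets the real axis at the cusps $\pm1$, where $|\mathfrak f|\to0$. So there is no smallest allowed $y$, and your plan of restricting the statement and checking a single endpoint $y_0$ cannot be completed as written. Even in your own framework this regime is harmless: $0<P(q)\le1$ gives $e^{\pi y/24}(1-P(q))\le e^{\pi y/24}\le1.03$ for $y\le y_*$.

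Once this case is split off, your crude estimate already finishes the proof, and the sharper numerics for $P(q)$ are unnecessary. Your $g$ is decreasing, and with $q_*:=e^{-\pi y_*}=1.03^{-24}\approx0.492$ one gets $g(y_*)=1.03\,q_*/(1-q_*^2)\approx0.67<1.03$. This covers all $y\ge y_*$.

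The paper's proof is the same kind of split, organised around a weaker lower bound. It uses $|\mathfrak f(\tau)|\ge 2e^{\pi y/24}-\mathfrak f(iy)$, which comes from $\bigl|\prod(1+w^{2n+1})-1\bigr|\le\prod(1+|w|^{2n+1})-1$. The tail $\mathfrak f(iy)-e^{\pi y/24}$ is decreasing in $y$. The split is made at the unique $y_0$ with $\mathfrak f(iy_0)=2e^{\pi y_0/24}$, where the trivial bound and the tail bound coincide. The constant is then $e^{\pi y_0/24}$, checked numerically to be at most $1.03$.

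Your factor-wise bound $\prod(1-q^{2n+1})$ is sharper and gives a closed-form check. But it only works after the trivial range $y\le y_*$ is separated out.
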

\begin{proof}
The same argument as in \cite[proof of Lemma 2.5]{Paz19} shows that $\exists!y_0>0,\allowbreak\mathfrak f(iy_0)=2e^{\frac{\pi}{24}y_0}$.
Using SageMath 10.4 we find $e^{\frac{\pi}{24}y_0}\le1.03$.
\end{proof}
We have $|\mathfrak f(\tau+2)|=|\mathfrak f(\tau)|$ and $\mathfrak f\left(\frac{-1}{\tau}\right)=\mathfrak f(\tau)$ (see \cite[Corollary 12.19]{Cox22}). Thus $\mathfrak|f|$ is invariant under the theta group $\Gamma_\theta:=\left\langle T^2, S\right\rangle$
which has a fundamental domain defined by $\mathcal F_{\theta}^{\circ}:=\{\tau\in\mathbb H:|\tau|>1, -1<\Re\tau<1\}$. So we redefine $\reallywidetilde{\tau}:=\gamma\tau\in\mathcal F_{\theta}$—where $\gamma\in\Gamma_\theta$.
We will now control $\mathfrak f$ with \begin{align*}g(\tau)&:=\frac{|\eta\left(\frac{\tau}2\right)\eta(2\tau)|^{\frac1{5.99}+\frac16}}{|\eta(\tau)|^{\frac13}}.\end{align*}
The exponent of the numerator can be $\frac1{6-\varepsilon}+\frac16$ for any small rational $\varepsilon>0$, which would lead to $h\left(\Phi_{\mathfrak f,N}\right)<\frac{\psi(N)}{12-2\varepsilon}(\ln N-2\lambda_N+\ln\ln N+C_{\varepsilon})$. Here we choose $\varepsilon=0.01$ for computation sake.
\begin{lemma}
The ``weight" of $g$ is given by $g(\tilde\tau)=(c\tau+d)^{\frac1{5.99}}g(\tau)$.
\end{lemma}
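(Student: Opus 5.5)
The plan is to check the transformation law on generators of $\Gamma_\theta=\langle T^2,S\rangle$ and then extend it to all of $\Gamma_\theta$ with the cocycle property of the automorphy factor. Since $g$ is built from absolute values of $\eta$, I read the statement as $g(\gamma\tau)=|c\tau+d|^{\frac1{5.99}}g(\tau)$ for $\gamma=\begin{pmatrix}a&b\\c&d\end{pmatrix}\in\Gamma_\theta$. The only input is the transformation law of $\eta$:
$$\eta(\tau+1)=e^{\frac{\pi i}{12}}\eta(\tau),\qquad \eta\left(\tfrac{-1}{\tau}\right)=\sqrt{-i\tau}\,\eta(\tau).$$
Throughout, write $\alpha:=\frac1{5.99}+\frac16$ for the exponent of the numerator of $g$.

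First, for $T^2$ (so $c=0$, $d=1$), I need $g(\tau+2)=g(\tau)$. The three factors transform as follows:
\begin{itemize}
\item $\eta(\tau+2)=e^{\frac{\pi i}{6}}\eta(\tau)$;
\item $\eta\left(\frac{\tau+2}{2}\right)=\eta\left(\frac\tau2+1\right)=e^{\frac{\pi i}{12}}\eta\left(\frac\tau2\right)$;
\item $\eta(2\tau+4)=e^{\frac{\pi i}{3}}\eta(2\tau)$.
\end{itemize}
Each picks up only a root of unity, which disappears after taking absolute values, so $g$ is invariant.

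Second, for $S$ (so $c=1$, $d=0$), I apply the inversion law to each factor. For the denominator, $|\eta(-1/\tau)|^{\frac13}=|\tau|^{\frac16}|\eta(\tau)|^{\frac13}$. For the numerator, rewrite the arguments in the form $-1/(\cdot)$:
\begin{itemize}
\item $\eta\left(\frac{-1}{2\tau}\right)=\sqrt{-2i\tau}\,\eta(2\tau)$;
\item $\eta\left(\frac{-2}{\tau}\right)=\eta\left(\frac{-1}{\tau/2}\right)=\sqrt{-i\tau/2}\,\eta\left(\frac\tau2\right)$.
\end{itemize}
Hence $\left|\eta\left(\frac{-1}{2\tau}\right)\eta\left(\frac{-2}{\tau}\right)\right|=|\tau|\,\left|\eta\left(\frac\tau2\right)\eta(2\tau)\right|$. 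The important point is that the factors $2$ and $\frac12$ cancel and the two half-and-double arguments swap. Raising to $\alpha$ and dividing gives
$$g\left(\tfrac{-1}{\tau}\right)=|\tau|^{\alpha-\frac16}g(\tau)=|\tau|^{\frac1{5.99}}g(\tau).$$

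Third, I extend to all of $\Gamma_\theta$. The map $j(\gamma,\tau):=|c\tau+d|$ satisfies the cocycle relation $j(\gamma_1\gamma_2,\tau)=j(\gamma_1,\gamma_2\tau)\,j(\gamma_2,\tau)$. Both sides of the claimed identity are therefore compatible with composition: if the law holds for $\gamma_1$ and $\gamma_2$, it holds for $\gamma_1\gamma_2$ and for $\gamma_1^{-1}$. Also $-I$ acts trivially and has $|c\tau+d|=1$. Induction on word length in $T^{\pm2},S^{\pm1}$ then gives the law on all of $\Gamma_\theta$.

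The main obstacle is only bookkeeping in the $S$ step: rewriting $\eta(-2/\tau)$ and $\eta(-1/(2\tau))$ so the inversion law applies and the constants $\sqrt2$ cancel. I would also stress that this cancellation is why the numerator must be the symmetric product $\eta(\tau/2)\eta(2\tau)$. No other step needs care.
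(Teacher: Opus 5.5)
Your proof is correct and follows the same route as the paper. The paper's proof is only your $S$-step, applying $\eta(-1/\tau)=\sqrt{-i\tau}\,\eta(\tau)$ to $\eta(\frac{-1}{2\tau})\eta(\frac{-2}{\tau})$ so that the $\sqrt2$ factors cancel, and it leaves $T^2$-invariance and the extension to $\Gamma_\theta$ implicit; your cocycle argument supplies these, and your reading of the factor as $|c\tau+d|^{\frac1{5.99}}$ is the right one, since $g$ is positive real-valued and the modulus removes any branch ambiguity in the non-integer power.
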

\begin{proof} We compute transformation under $S$:
\begin{align*}\eta\left(\frac{-1}{2\tau}\right)\eta\left(\frac{-2}{\tau}\right)&=\sqrt{2i\tau}\sqrt{\frac{i\tau}2}\eta(2\tau)\eta\left(\frac{\tau}2\right)=i\tau\eta(2\tau)\eta\left(\frac{\tau}2\right).\qedhere
\end{align*}
\end{proof}
\begin{lemma}\label{delta} For $\tau\in\mathcal F_{\theta}$, we have $\begin{aligned}f(\tau):=\ln\max(g(\tau),|\mathfrak f(\tau)|g(\tau))
&\leq1.05.
\end{aligned}$
\end{lemma}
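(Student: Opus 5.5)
We need to bound $\ln\max(g,|\mathfrak f|g)\le 1.05$ on $\mathcal F_\theta$, where $g=|\eta(\tau/2)\eta(2\tau)|^{\alpha}/|\eta(\tau)|^{1/3}$ and $\alpha=\frac1{5.99}+\frac16$. The plan is to write everything in $q$-product form, bound the function on a compact part of $\mathcal F_\theta$ by direct computation, and control the two cusps of $\mathcal F_\theta$ (namely $\infty$ and $\pm1$) analytically.

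\textbf{The cusp $\infty$.} Using $\eta(\tau)=q^{1/24}\prod(1-q^n)$ with $q=e^{2\pi i\tau}$, the leading exponent of $g$ in $y$ is
\[
e^{-2\pi y\left(\alpha(\frac1{48}+\frac1{12})-\frac1{72}\right)}.
\]
Since $\mathfrak f\sim e^{\pi y/24}=e^{2\pi y/48}$, the product $|\mathfrak f|g$ behaves like $e^{-2\pi y E}$ with
\[
E=\alpha\cdot\tfrac{5}{48}-\tfrac1{72}-\tfrac1{48}.
\]
With $\alpha\approx0.3336$ this gives $E\approx 0.0348-0.0347>0$, just barely positive. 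This is exactly why the exponent $\frac1{6-\varepsilon}$ is used: at $\varepsilon=0$ the exponent vanishes. So both $g$ and $|\mathfrak f|g$ are bounded as $y\to\infty$, with tail products bounded by explicit estimates such as $\prod(1+|w|^{2n+1})\le\exp(|w|/(1-|w|^2))$ and similar estimates for $\prod|1-q^n|^{\pm1}$.

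\textbf{The cusp $\pm1$.} Here I would apply the transformation $\tau\mapsto -1/(\tau\mp1)$, or equivalently move $\pm1$ to $\infty$ by $ST^{\mp1}$. Under this map $\mathfrak f$ becomes (up to a root of unity) $\mathfrak f_2$ or $\mathfrak f_1$, using the standard Weber relations in \cite[Chapter 12]{Cox22}. The quotients $\eta(\tau/2)$, $\eta(2\tau)$ and $\eta(\tau)$ transform into eta quotients at $(\tau'+1)/2$, $\tau'/2$ and so on, with automorphy factors. Near this cusp $|\mathfrak f|\to 0$ like $\mathfrak f_2$'s $q^{1/24}$-type decay. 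So the task is to check that $g$, including the factor $|c\tau+d|^{\alpha\cdot 2/2-1/6}$ from the weights, stays bounded. This reflects the weight computation in the previous lemma, $g(\tilde\tau)=(c\tau+d)^{1/5.99}g(\tau)$, restricted to $\mathcal F_\theta$ where $|c\tau+d|$ is controlled. The strategy is to compute the expansion of $g$ in the local parameter at that cusp and verify that the leading exponent is nonnegative, with the factor $y'^{\ldots}$ absorbed.

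\textbf{Compact middle region and the main obstacle.} After cutting off neighbourhoods $y\ge Y_0$ and $|\tau\mp1|\le r_0$, the remaining region is compact. There I would evaluate $\ln\max(g,|\mathfrak f|g)$ numerically in SageMath on a fine grid with a Lipschitz or monotonicity error margin, to confirm the maximum is $\le 1.05$. The main obstacle is the cusp at $\infty$: because $E$ is only about $10^{-4}$, the decay of $|\mathfrak f|g$ is extremely slow. A crude cutoff would need $Y_0$ huge, so the analytic tail bound must be sharp. Concretely, the leading term is $e^{-2\pi yE}\le1$ times a product factor that is $1+O(e^{-\pi y})$, and this product is already within $0.01$ of $1$ for $y\ge 3$. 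The bound $1.05$ then comes from the product corrections near the corners of $\mathcal F_\theta$, which are handled numerically.
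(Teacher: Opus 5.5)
\textbf{There is a genuine gap: the cusp analysis is pointed at the wrong cusp.} At $\infty$ nothing delicate happens. From $\eta(\tau/2)\eta(2\tau)=\eta(\tau)^2/\mathfrak f(\tau)$ you get $g=|\mathfrak f|^{-\alpha}|\eta|^{2\alpha-1/3}$. The leading factor of $|\mathfrak f|g$ is then $e^{-2\pi yE}\le 1$, so $f$ is bounded there by tiny product corrections however small $E$ is. Even $E=0$ (i.e. $\varepsilon=0$) would be harmless at $\infty$.

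\textbf{Where the constant actually lives.} It lives at the cusp $\pm1$. Put $\tau=1-1/\tau'$ with $0\le\Re\tau'\le\frac12$. Then $|\mathfrak f(\tau)|=\sqrt2\,|\eta(2\tau')/\eta(\tau')|\to0$, so $f=\ln g$ near this cusp, and
\[
\ln g(\tau)=-\tfrac{\alpha}{2}\ln 2+\tfrac{1}{5.99}\ln|\tau'|-\tfrac{2\pi\delta}{24}\,y'+O(e^{-2\pi y'}),\qquad \delta=\alpha-\tfrac13=\tfrac1{5.99}-\tfrac16\approx 2.8\cdot 10^{-4}.
\]
The decay rate is only about $7.3\cdot10^{-5}$. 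So $f$ keeps increasing into the cusp until $y'\approx 2.3\cdot 10^{3}$, i.e. $|\tau\mp1|\approx 4\cdot10^{-4}$. There it reaches about $1.009$, and only afterwards tends to $-\infty$.

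\textbf{Consequences for your plan.} The supremum of $f$ on $\mathcal F_\theta$, which is the whole content of the constant $1.05$, sits deep inside the cusp neighbourhood of $\pm1$. It does not come from product corrections near corners of a compact region. With a cutoff like $|\tau\mp1|\le 10^{-2}$, your grid would only see values around $0.65$. Your treatment of this cusp asserts only boundedness (``leading exponent nonnegative, factor absorbed''), which yields no numerical bound at all. What is needed is an explicit maximisation of $\frac1{5.99}\ln y'-\frac{2\pi\delta}{24}y'$, together with controlled product errors, on that cusp region.

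\textbf{A further correction.} ``Nonnegative'' must be ``strictly positive''. At $\varepsilon=0$ the rate vanishes and $g\sim 2^{-1/6}|\tau'|^{1/6}$ is unbounded. This cusp, not $\infty$, is the reason $\varepsilon>0$ is needed.

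\textbf{Comparison with the paper.} The paper avoids a two-dimensional grid. Both $\ln g$ and $\ln(|\mathfrak f|g)$ are real linear combinations of logarithms of moduli of nonvanishing holomorphic functions, hence harmonic. So $f$ is subharmonic on $\mathcal F_\theta^\circ$, and its maximum is attained on the boundary of $\mathcal F_\theta$ in $\mathbb P^1(\mathbb C)$. The check then reduces to a one-dimensional plot along $|\tau|=1$ and $\Re\tau=\pm1$. That plot still has to resolve the peak near $\pm1$ at scale about $10^{-4}$.
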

\begin{proof}
Since $\eta$ is holomorphic and has no zeroes on $\mathbb H$, it follows that quotients of $\eta$'s are holomorphic on $\mathbb H$. The maximum modulus principle on the Riemann surface $\mathcal F_{\theta}^\circ$ then tells us that $f$ attains its extrema on $\partial_{\mathbb P^1(\mathbb C)}\mathcal F_{\theta}$. We then use SageMath 10.4 to plot $f$ on $\partial_{\mathbb P^1(\mathbb C)}\mathcal F_{\theta}$.
\end{proof}
Let $C'_{N}:=\left\{\begin{pmatrix}a&b\\0&d\end{pmatrix}\in\mathrm{M}_2(\mathbb Z)\middle|
ad=N,0\le b<d\right\}$, i.e. $C_N$ without the $\gcd$-condition. We follow the notation of \cite[Section 1]{KiS25}. Let $h\in\mathcal M(l)$. \begin{lemma}For $p\nmid l$, we have $h|\mathcal T(p^r)=\omega\prod_{\sigma\in C'_{p^r}}(h\circ\sigma)$ where $\omega^2=1$.
\end{lemma}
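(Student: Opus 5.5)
We have to guess the notation of \cite{KiS25}. Most likely $\mathcal M(l)$ is a multiplicative group of modular functions (products of eta quotients, or generalized modular functions) of level $l$. The operator $\mathcal T(p^r)$ is then a \emph{multiplicative} Hecke operator, defined as a product over the coset representatives of $\Gamma\backslash\{\det = p^r\}$, possibly twisted by a multiplier. The plan is to unwind the definition of $\mathcal T(p^r)$ and reduce it to the product over $C'_{p^r}$ up to a constant. Then we show that constant has square $1$.

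The first step is to choose representatives. The double coset $\Gamma\begin{pmatrix}1&0\\0&p^r\end{pmatrix}\Gamma$, together with its imprimitive companions, makes up all integer matrices of determinant $p^r$. A full set of representatives for $\mathrm{SL}_2(\mathbb Z)\backslash\{\det=p^r\}$ is the upper triangular set $C'_{p^r}$, with $ad=p^r$ and $0\le b<d$. Since $p\nmid l$, the same matrices, or their conjugates under a fixed element reducing to a suitable diagonal matrix mod $l$, serve as representatives for the level-$l$ group used in \cite{KiS25}. This is the same reasoning as in \autoref{product prop} and \autoref{act}: reducing mod $l$, each $\begin{pmatrix}a&b\\0&d\end{pmatrix}$ with $ad$ a unit mod $l$ can be adjusted. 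If \cite{KiS25} uses representatives $\begin{pmatrix}a&bl\\0&d\end{pmatrix}$ or a different range of $b$, I will translate $b\mapsto b+kd$ and change variables within the residue system.

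The second step handles the cost of changing representatives. Each change multiplies $h\circ\sigma$ by the value at some $\gamma\in\Gamma$ of the character or multiplier of $h$, for instance $\eta$-multipliers, which are roots of unity. Collecting these factors gives
\[
h|\mathcal T(p^r)=\omega\prod_{\sigma\in C'_{p^r}}(h\circ\sigma),
\]
where $\omega$ is a product of root-of-unity multiplier values, together with any normalising constant (such as a power of $p$ or a sign) in the definition of $\mathcal T(p^r)$.

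The third step, which I expect to be the main obstacle, is showing $\omega^2=1$. I would compare $q$-expansions. Both sides are in $\mathbb Q(\zeta_l)((q^{1/l}))$, and the leading coefficient of $\prod_{\sigma}h\circ\sigma$ can be computed explicitly as a product over $b$ of roots of unity $\zeta_d^{b\,\cdot}$. This product is a Gauss-type product, whose value is $\pm$ a power of $\zeta$. Meanwhile \cite{KiS25} presumably normalises $h|\mathcal T(p^r)$ to have rational leading coefficient, or proves $h|\mathcal T(p^r)=\pm h^{\deg}\cdots$ via the eigenform property. Matching leading coefficients then forces $\omega$ to be real and of absolute value $1$, so $\omega=\pm1$.

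If that comparison is awkward, there is a fallback. Apply complex conjugation $\tau\mapsto-\bar\tau$, which permutes $C'_{p^r}$ via $b\mapsto d-b$ and fixes the rational structure of $h$. This shows $\bar\omega=\omega$, and since $|\omega|=1$, again $\omega^2=1$.
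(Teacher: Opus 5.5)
Comparing leading $q$-coefficients in your Step 3 is exactly the paper's strategy, but as written the decisive step does not go through. You describe the leading coefficient of $\prod_{\sigma\in C'_{p^r}}h\circ\sigma$ only as ``$\pm$ a power of $\zeta$'', and then assert that matching leading coefficients forces $\omega$ to be real. If that product were a nontrivial root of unity, $\omega$ would be its inverse and not real, so $\omega^2\neq1$.

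The missing computation is elementary. If $h=q^n+\cdots$, then for $\sigma=\begin{pmatrix}a&b\\0&d\end{pmatrix}$ the function $h\circ\sigma$ has leading coefficient $e^{2\pi i nb/d}$, and
\[
\prod_{b=0}^{d-1}e^{2\pi i nb/d}=e^{\pi i n(d-1)}=(-1)^{n(d-1)}.
\]
So the leading coefficient of the product over $C'_{p^r}$ is exactly $\pm1$, and in fact $1$ for odd $p$. Your fallback via $\tau\mapsto-\bar\tau$ can substitute for this computation. Using that $h$ has real coefficients and integral exponents, it shows the product has real coefficients, so its unimodular leading coefficient is $\pm1$.

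The second gap is that $|\omega|=1$ is never justified. It depends entirely on how $\mathcal T(p^r)$ is normalised, and you leave this open. You only say ``rational leading coefficient'', and in Step 2 you even allow a normalising power of $p$, which would give $\omega=\pm p^k$ and make the lemma false. The paper's proof rests on the fact that in \cite{KiS25} $h|\mathcal T(p^r)$ is a $\tau$-independent multiple of $\prod_{\sigma\in C'_{p^r}}h\circ\sigma$, normalised to have leading coefficient $1$. Combined with the root-of-unity computation above, this gives $\omega=\pm1$ at once. Once you commit to that definition, your Steps 1 and 2 (changing coset representatives and tracking multiplier values) are unnecessary.
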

\begin{proof}Since $h$ has leading $q$-coefficient equal to $1$, it follows that the leading $q$-coefficient of the right hand side is
$$\omega\prod_{\substack{d|N\\0\le b<d}}\exp\left(\frac {2\pi ib}d\right),$$
which can be made equal to $1$. We also have that $h|\mathcal T(p^r)$ is defined to be the right hand side multiplied by a constant in $\tau$.
\end{proof}
Let $f(\delta):=g(\delta\tau)$. Some checks show that $$C'_N=\bigcup_{k^2\mid N}kC_{\frac N{k^2}}.$$ We show that this is a disjoint union. Assume $\mathrm{gcd}(a_1,b_1,d_1)=\mathrm{gcd}(a_2,b_2,d_2)=1$ in the matrices $$C_N'\ni\begin{pmatrix}k_1a_1&k_1b_1\\0&k_1d_1\end{pmatrix}=\begin{pmatrix}k_2a_2&k_2b_2\\0&k_2d_2\end{pmatrix}.$$
Then $k_1=\gcd(k_1a_1,k_1b_1,k_1d_1)=\gcd(k_2a_2,k_2b_2,k_2d_2)=k_2$. So, the union is disjoint.\par
So $$\sum_{\sigma\in C'_N}f(\sigma)=\sum_{k^2\mid N}\sum_{\delta\in C_{\frac N{k^2}}}f(\delta).$$\par
\begin{proposition}[M\"obius inversion]
For all positive integers $N$, assume $G(N)=\sum_{k^m|N}F\left(\frac N{k^m}\right)$. Then $F(N)=\sum_{k^m|N}\mu(k)G\left(\frac N{k^m}\right)$.
\end{proposition}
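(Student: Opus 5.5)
We substitute the hypothesis into the right-hand side and collapse the resulting double sum. The key identity is
\[
\sum_{k^m\mid n}\mu(k)=\begin{cases}1,&n=1,\\0,&n>1.\end{cases}
\]
This holds because $k^m\mid n$ exactly when $k\mid r(n)$, where $r(n):=\prod_{p}p^{\lfloor v_p(n)/m\rfloor}$ is the largest integer whose $m$th power divides $n$. Hence the sum equals $\sum_{k\mid r(n)}\mu(k)$, which by the classical Möbius identity is $1$ if $r(n)=1$ and $0$ otherwise. Moreover $r(n)=1$ exactly when $n$ is $m$th-power-free, not only when $n=1$, so we must be careful about what the collapse actually gives.

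For fixed $N$, substituting $G(N/k^m)=\sum_{l^m\mid N/k^m}F(N/(kl)^m)$ gives
\[
\sum_{k^m\mid N}\mu(k)G\!\left(\tfrac N{k^m}\right)=\sum_{k^m\mid N}\;\sum_{l^m\mid \frac N{k^m}}\mu(k)\,F\!\left(\tfrac N{(kl)^m}\right).
\]
Write $t=kl$. The conditions $k^m\mid N$ and $l^m\mid N/k^m$ together are equivalent to $t^m\mid N$ with $k\mid t$. Regrouping by $t$, the double sum becomes
\[
\sum_{t^m\mid N}F\!\left(\tfrac N{t^m}\right)\sum_{k\mid t}\mu(k).
\]
The inner sum is over all divisors $k$ of $t$, so the classical identity $\sum_{k\mid t}\mu(k)=[t=1]$ applies directly. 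Only the term $t=1$ survives, leaving $F(N)$.

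The main obstacle is this reindexing, namely checking that $(k,l)\mapsto (t,k)$ with $t=kl$, $k\mid t$, is a bijection between the two index sets. Explicitly, $k^m\mid N$ and $l^m\mid N/k^m$ hold if and only if $(kl)^m\mid N$, so every pair of positive integers $(k,l)$ with $(kl)^m\mid N$ corresponds to exactly one $t$ with $t^m\mid N$ together with a divisor $k\mid t$. All sums are finite, so the interchange needs no convergence justification. Everything else is the standard Möbius identity.

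In the intended application $m=2$, with $G(N)=\sum_{\sigma\in C'_N}f(\sigma)$ and $F(N)=\sum_{\delta\in C_N}f(\delta)$, which is exactly the relation established just before the statement.
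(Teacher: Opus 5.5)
Your proof is correct, but it takes a different route from the paper. You verify the formula directly. You substitute the hypothesis into $\sum_{k^m\mid N}\mu(k)G(N/k^m)$, reindex the pairs $(k,l)$ with $(kl)^m\mid N$ by $t=kl$ and $k\mid t$, and collapse the inner sum with $\sum_{k\mid t}\mu(k)=[t=1]$. This is the standard Dirichlet-convolution argument carried out with $m$th powers.

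The paper instead derives the inverse from the hypothesis. It writes $F(N)=G(N)-\sum_{k_1>1,\,k_1^m\mid N}F(N/k_1^m)$ and unrolls this recursion into an alternating sum over ordered tuples $(k_1,\dots,k_n)$ with all $k_s>1$. The recursion terminates after $u$ steps, where $N=r\prod_{n=1}^u p_n^m$ with $r$ free of $m$th powers. The paper then observes that the coefficient of $G(N/k^m)$ is a signed count of ordered factorizations of $k$ into factors $>1$, so it does not depend on $m$. Finally it reads off that this coefficient is $\mu(k)$ from the classical ($m=1$) M\"obius inversion formula.

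Your route is shorter and self-contained: it needs only $\sum_{k\mid t}\mu(k)=[t=1]$. It also avoids a step the paper leaves implicit when comparing coefficients with the $m=1$ case, namely that the coefficients expressing $F(N)$ in terms of the values $G(N/k)$ are uniquely determined; this holds because $F\mapsto G$ is unitriangular. The paper's route is constructive and explains why the answer is independent of $m$.

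One cosmetic fix: the identity you display in your first paragraph as the ``key identity'' is false for $m\ge2$. For $m=2$, $n=2$ the sum is $1$, not $0$, as you half-notice yourself. Your argument never uses it, so delete it and name $\sum_{k\mid t}\mu(k)=[t=1]$ as the key identity instead.
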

\begin{proof}
Let $N=r\prod_{n=1}^up_n^m$ with $r$ free of $m$-powers and $p_n$ prime. Then
\begin{align*}
F(N)&=G(N)-\sum_{\substack{d_1^m|N\\d_1>1}}F\left(\frac N{k_1^m}\right)\\
&=G(N)-\sum_{\substack{k_1^m|N\\k_1>1}}\left(G\left(\frac N{k_1^m}\right)-\sum_{\substack{k_2^m|\frac N{k_1^m}\\k_2>1}}F\left(\frac N{k_1^mk_2^m}\right)\right)\\
&=G(N)-\sum_{\substack{k_1^m|N\\k_1>1}}G\left(\frac N{k_1^m}\right)+\sum_{\substack{k_1^mk_2^m|N\\k_1,k_2>1}}F\left(\frac N{k_1^mk_2^m}\right)\\
&\vdotswithin=\\
&=\sum_{n=0}^u(-1)^n\sum_{\substack{\prod_{s=1}^nk_s^m|N\\\forall s\in\mathbb Z\cap[1,n],k_s>1}}G\left(\frac N{\prod_{s=1}^nk_s^m}\right)\numberthis\label{sumG}
\end{align*}
by induction, with last inductive step $$F\left(\frac N{\prod_{s=1}^{n}k_s^m}\right)=G\left(\frac N{\prod_{s=1}^{n}k_s^m}\right).$$
For all $k^m|N$, the coefficient of $G\left(\frac N{k^m}\right)$ in \eqref{sumG} is equal to the coefficient of $G\left(\frac N{k}\right)$ in 
$$\sum_{n=0}^u(-1)^n\sum_{\substack{\prod_{s=1}^nk_s|N\\\forall s\in\mathbb Z\cap[1,n],k_s>1}}G\left(\frac N{\prod_{s=1}^nk_s}\right).$$
Such coefficient is equal to $\mu(k)$ since the $m=1$ case is the M\"obius  inversion formula.
\end{proof}\par
By taking logarithms, we see that if
$G(N)=\prod_{k^m|N}F\left(\frac N{k^m}\right)$, then $F(N)=\prod_{k^m|N}G^{\mu(k)}\left(\frac N{k^m}\right)$.
Let $h|F(N):=\prod_{\sigma\in C_N}(h\circ\sigma)$ be the multiplicative Hecke operator acting on $h$ (but with a gcd-condition). Let $G(N):=\prod_{\sigma\in C'_N}(h\circ\sigma)=\prod_{n^2|N}h|F\left(\frac N{n^2}\right)$. Then
\begin{align*}
h|F(N)&=\prod_{n^2|N}G^{\mu(n)}\left(\frac N{n^2}\right)=\prod_{n^2|N}\prod_{\sigma\in C'_{\frac N{n^2}}}(h\circ\sigma)^{\mu(n)}.
\end{align*}
Let $h$ be an eta-quotient with leading $q$-term $q^n$ (eta-quotients have integral leading $q$-power). Then
\begin{align*}
\omega h|F(p)&=h|\mathcal T(p)\\
&=h^{\lambda(p)}.\tag{by \cite[Theorem 1.13]{KiS25}}
\end{align*}
The leading $q$-term in the left hand side is
\begin{align*}
\prod_{\gamma\in C_p}e^{2n\pi i\gamma\tau}&=\prod_{\gamma\in C_p}e^{2n\pi i\frac{a\tau+b}d}=e^{2n\pi i\left(\sum_{b=0}^{p-1}\frac{\tau+b}p+p\tau\right)}=e^{2n\pi i\left(\tau+\frac{p(p-1)}{2p}+p\tau\right)}=(q^{p+1}e^{(p-1)\pi i})^n=q^{n(p+1)}.
\end{align*}
The leading $q$-term in the right hand side is $e^{2n\pi i\lambda(p)}=q^{n\lambda(p)}$. So $\lambda(p)=p+1$.\par
We now calculate $h|\mathcal T(p^r)$:
\begin{align*}h|\mathcal T(p^r)\mathcal T(p)&=(h|\mathcal T(p^{r+1}))(h|\mathcal T(p^{r-1}))^p\tag{by \cite[Theorem 1.7]{KiS25}}\\
h|\mathcal T(p^{r+1})&=\frac{h|\mathcal T(p^r)\mathcal T(p)}{(h|\mathcal T(p^{r-1}))^p}\\
h|\mathcal T(p^r)&=\omega_r h^{a_{r}}
\end{align*}
for some $\omega_r^2=1$ and where
$$a_{r+1}=a_r(p+1)-a_{r-1}p,\quad a_0=1,\quad a_1=p+1,$$
since if $h|\mathcal T(p^r)=\omega_rh^{a_r},$ then $h|\mathcal T(p^r)\mathcal T(p)=(\omega_rh^{a_r})|\mathcal T(p)=\omega_r^{p+1}(h|\mathcal T(p))^{a_r}=w_r^{p+1}h^{a_r(p+1)}$. This recurrence relation has solution $a_r=A+Bp^r$. Solving for $A$ and $B$ we get $$a_r=\frac{p^{r+1}-1}{p-1}.$$
\par 
For $r\ge2$ we have
\begin{align*}
h|F(p^r)&=\prod_{p^{2m}|p^r}h|\mathcal T(p^{r-2m})^{\mu\left(p^m\right)}=\frac{h|\mathcal T(p^{r})}{h|\mathcal T(p^{r-2})}=\omega h^{p^r+p^{r-1}}=\omega h^{\psi(p^r)}
\end{align*}
for some $\omega^2=1$.\par
For $\gcd(m,n)=1$ we have
\begin{align*}
h|F(mn)&=\prod_{k^2|mn}\prod_{\sigma\in C'_{\frac{mn}{k^2}}}(h\circ\sigma)^{\mu(k)}\\
&=\prod_{l^2|n}\prod_{k^2|m}\prod_{\sigma\in C'_{\frac{mn}{k^2l^2}}}(h\circ\sigma)^{\mu(kl)}\\
&=\prod_{l^2|n}\prod_{k^2|m}\left(h|\mathcal T\left(\frac m{k^2}\right)\mathcal T\left(\frac n{l^2}\right)\right)^{\mu(kl)}\\
&=h|F(m)F(n).
\end{align*}
Since $\psi$ is multiplicative, we get
\begin{proposition}
There exists an $\omega^2=1$ such that $h|F(N)=\omega h^{\psi(N)}$ for $\gcd(l,N)=1$.\end{proposition}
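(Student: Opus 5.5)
The plan is to exploit the multiplicativity of $F$ established just before the statement, together with the prime-power computation, and then assemble these over the prime factorisation of $N$. Write $N=\prod_{i=1}^s p_i^{r_i}$ with the $p_i$ distinct primes not dividing $l$. The argument is an induction on the number $s$ of distinct prime factors. The base case $s=0$ is $N=1$: here $C_1=\{I\}$, so $h|F(1)=h=h^{\psi(1)}$ with $\omega=1$.

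The prime-power case $s=1$ has two parts.
\begin{itemize}
\item For $r=1$ we have $C_p=C'_p$, and the lemma relating $h|\mathcal T(p^r)$ to $\prod_{\sigma\in C'_{p^r}}(h\circ\sigma)$ gives $h|F(p)=\omega\, h|\mathcal T(p)=\omega h^{\lambda(p)}$. We already showed $\lambda(p)=p+1=\psi(p)$.
\item For $r\ge2$ we use the displayed computation $h|F(p^r)=\omega h^{p^r+p^{r-1}}=\omega h^{\psi(p^r)}$. This rests on the closed form $a_r=\frac{p^{r+1}-1}{p-1}$, so that $a_r-a_{r-2}=p^r+p^{r-1}$. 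The same Möbius-inversion formula also covers $r=1$, since then only $m=0$ contributes.
\end{itemize}

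For the inductive step, write $N=mn$ with $\gcd(m,n)=1$, both coprime to $l$, and each having fewer prime factors than $N$. By the multiplicativity identity, $h|F(mn)=h|F(m)F(n)$, meaning $h|F(m)$ is applied first and $F(n)$ is then applied to the result. By induction $h|F(m)=\omega_m h^{\psi(m)}$. We need $F(n)$ to act compatibly with scalars and powers:
\[
(\omega_m h^{\psi(m)})|F(n)=\omega_m^{\psi(n)}\bigl(h|F(n)\bigr)^{\psi(m)}.
\]
This holds because $F(n)$ is a product over the $\psi(n)$ elements of $C_n$ of compositions, and composition commutes with taking powers and with multiplying by constants. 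Applying induction again, the result is $\omega_m^{\psi(n)}\omega_n^{\psi(m)}h^{\psi(m)\psi(n)}=\omega h^{\psi(mn)}$, using multiplicativity of $\psi$. The new $\omega$ is a product of signs, so $\omega^2=1$.

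The main obstacle is making the multiplicativity step rigorous rather than formal. Its derivation silently identifies $\prod_{\sigma\in C'_{mn/(k^2l^2)}}(h\circ\sigma)$ with $h|\mathcal T(m/k^2)\mathcal T(n/l^2)$ up to sign. This needs three things:
\begin{itemize}
\item the bijection $C'_{m'}\times C'_{n'}\to C'_{m'n'}$, $(\sigma,\sigma')\mapsto\sigma\sigma'$, modulo left multiplication by $\Gamma$, for coprime $m',n'$;
\item the invariance of $h$ under the relevant group, so that the product over cosets does not depend on the chosen representatives;
\item control of the normalising constants via the leading $q$-coefficient, as in the proof of the lemma for $h|\mathcal T(p^r)$, to see that they are only signs.
\end{itemize}
I would check the coset bijection by the standard Hermite normal form argument. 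I would settle the constants by comparing leading $q$-terms, exactly as was done when deriving $\lambda(p)=p+1$. The remaining issue is whether the extra roots of unity $e^{2\pi i b n/d}$ multiply to $\pm1$; that product is $e^{\pi i n(d-1)}$ for each $d$, so the claim holds.
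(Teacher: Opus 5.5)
Your proposal is correct and follows the paper's argument. You take the prime-power case from $\lambda(p)=p+1$ and $a_r-a_{r-2}=p^r+p^{r-1}=\psi(p^r)$, then combine the multiplicativity $h|F(mn)=h|F(m)F(n)$ with the multiplicativity of $\psi$. Your explicit induction and the identity $(\omega_m h^{\psi(m)})|F(n)=\omega_m^{\psi(n)}(h|F(n))^{\psi(m)}$ simply spell out what the paper compresses into ``since $\psi$ is multiplicative''.

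One clarification for your rigour check of the multiplicativity step. All the representatives $\sigma\sigma'$ are upper triangular with positive diagonal, so two of them in the same $\mathrm{SL}_2(\mathbb Z)$-coset differ only by a power of $T$. The only invariance of $h$ you need is therefore $1$-periodicity, which the integral leading $q$-power gives. You do not need invariance under any larger group, and an eta-quotient of nonzero weight would not have it under plain composition anyway.
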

Since $|\mathfrak f|$ is invariant under $T^2$, our Mahler measure bound becomes
$$S_{\mathfrak f, N}(\tau)\le\sum_{\sigma\in C_{2,N}}\ln\max(1,|\mathfrak f_{\sigma}|).$$
Once we obtain a product involving our controlling function, we notice that $g$ has $\eta\left(\frac{\tau}2\right)$ which is not part of any eta-quotient. So we convert:
\begin{align*}\prod_{\sigma\in C_{2,N}}g\left(\frac{a\tau+2b}d\right)
&=\prod_{\sigma\in C_{2,N}}g\left(2\frac{a\frac{\tau}2+b}d\right)=\prod_{\sigma\in C_N}g\left(2\sigma\frac{\tau}2\right)=g^{\psi_N}(\tau).
\end{align*}
where the last step uses the fact that $g^{599\cdot6\cdot24}(2\tau)$ is the modulus of an eta-quotient.\par
Now that we have this result, we again get something similar to \cite[Lemma 3.1]{BrPa22}:
\begin{lemma}\label{c_n}
Let
$$
\begin{aligned}
a_{\mathfrak f}(\tau)&:=14.3-11.98\ln (g(\tau))-\ln y \\
b_{\mathfrak f}(\tau)&:=1.05-\ln |g(\tau)|.
\end{aligned}
$$
Suppose $N_0\in\mathbb R$ with $N>N_0 \geq{e^e}$. For all integers $n \geq 0$, let
\begin{align*}
c_0(\tau) &:=a_{\mathfrak f}(\tau)+\ln \left(\frac1{5.99}+\frac{b_{\mathfrak f}(\tau)}{\ln N_0}\right) \\
c_{n+1}(\tau) &:=a_{\mathfrak f}(\tau)+\ln \frac1{11.98}+\ln \left(1+\frac{\ln \ln N_0+c_n(\tau)}{\ln N_0}\right).
\end{align*}
Then 
\begin{align*}
S_{\mathfrak f,N}(\tau) \leq \frac{\psi(N)}{11.98}\left(\ln N-2 \lambda_N+\ln \ln N+c_n(\tau)\right).\numberthis\label{explicit}
\end{align*}
for all $n$ up to which $c_n(\tau)\ge0$.
\end{lemma}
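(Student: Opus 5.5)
We follow the structure of the proof of \cite[Lemma 3.1]{BrPa22}, as in the $\gamma_2$ case (the first \autoref{c_n}), with $\mathfrak f$ in place of $\gamma_2$, $g$ in place of $|\eta^8|$, and $\mathcal F_\theta$ in place of $\mathcal F$.

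\textbf{Splitting the summand with the controlling function.} For each $\sigma\in C_{2,N}$, reduce $\sigma\tau$ into $\mathcal F_\theta$ by some $\gamma\in\Gamma_\theta$, and write $\widetilde{\sigma\tau}=\gamma\sigma\tau$ with imaginary part $\tilde y_\sigma$. Since $|\mathfrak f|$ is $\Gamma_\theta$-invariant,
$$\ln\max(1,|\mathfrak f_\sigma|)=\ln\max\bigl(g(\widetilde{\sigma\tau}),|\mathfrak f(\widetilde{\sigma\tau})|g(\widetilde{\sigma\tau})\bigr)-\ln g(\widetilde{\sigma\tau}).$$
The first term is at most $1.05$ by \autoref{delta}. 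For the second term we use the weight lemma, $g(\tilde\tau)=|c\tau+d|^{1/5.99}g(\tau)$, together with $|c\tau+d|^2=y/\tilde y$. This gives
$$-\ln g(\widetilde{\sigma\tau})=-\ln g(\sigma\tau)-\tfrac1{11.98}\bigl(\ln \operatorname{Im}(\sigma\tau)-\ln\tilde y_\sigma\bigr).$$
Summing over $\sigma$, the terms $-\ln g(\sigma\tau)$ are handled by the product identity $\prod_{\sigma\in C_{2,N}}g(\sigma\tau)=g^{\psi(N)}(\tau)$ established just above. This produces the $-11.98\ln g(\tau)$ part of $a_{\mathfrak f}$ once everything is normalised by $\psi(N)/11.98$. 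The terms $-\ln\operatorname{Im}(\sigma\tau)$ equal $-\ln(ay/d)$ summed over $C_{2,N}$. As in \cite{BrPa22}, this yields $\psi(N)(\ln N-2\lambda_N)/2-\psi(N)\ln y$ (with $\lambda_N=\sum_{p|N}\frac{\ln p}{p+1}\cdot\ldots$ in the usual sense), using only the $a,d$ entries, which are the same as for $C_N$.

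\textbf{Controlling $\sum\ln\tilde y_\sigma$ (main obstacle).} The remaining quantity is $\frac1{11.98}\sum_\sigma\ln\tilde y_\sigma$. Here we use \autoref{imf}: $\tilde y_\sigma\le\frac{24}{\pi}\ln(|\mathfrak f_\sigma|+1.03)$. Bounding $\ln(|\mathfrak f_\sigma|+1.03)\le\ln\max(1,|\mathfrak f_\sigma|)+\ln 2.03$ and applying Jensen (concavity of $\ln$) gives
$$\sum_\sigma\ln\tilde y_\sigma\le\psi(N)\ln\Bigl(\tfrac{24}{\pi}\bigl(\tfrac{S_{\mathfrak f,N}}{\psi(N)}+0.71\bigr)\Bigr).$$
Combining everything, we obtain a self-referential inequality of the form
$$S\le\frac{\psi(N)}{11.98}\Bigl(\ln N-2\lambda_N+a_{\mathfrak f}(\tau)+\ln\bigl(S/\psi(N)+\cdots\bigr)\Bigr),$$
where the numerical constants ($\ln\frac{24}{\pi}$, $11.98\cdot1.05$, the $+0.71$) are absorbed into $14.3$. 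The real difficulty is checking that they fit; I would verify this numerically.

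\textbf{Bootstrapping.} First we need a crude bound. Using only the $1.05$ bound and $-\ln g$, together with $\tilde y_\sigma\ge\sqrt3/2$ and $\tilde y_\sigma\le$ something, we obtain $S/\psi(N)\le\frac1{5.99}\ln N+b_{\mathfrak f}$. Substituting this into the logarithm and factoring out $\ln N/11.98$ gives $c_0$. Iterating, we substitute the bound with $c_n$ back in, giving
$$\ln\Bigl(\tfrac{\ln N}{11.98}\bigl(1+\tfrac{\ln\ln N+c_n}{\ln N}\bigr)\Bigr).$$
By the monotonicity of $x\mapsto(\ln x+c_n)/x$ on $[e,\infty)$ when $c_n\ge0$, as in the proof of the $\gamma_2$ lemma, this is at most the same expression with $N_0$ in place of $N$. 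That yields $c_{n+1}$, and induction on $n$ completes the proof.
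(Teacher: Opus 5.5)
Your route is the one the paper intends. The paper gives no argument beyond pointing to \cite[Lemma 3.1]{BrPa22} with \autoref{imf}, the weight of $g$, the bound $1.05$ and the product formula for $g$, assembled as you do. The gap is the step you defer to a numerical check, and that check fails. Your own bookkeeping (Jensen applied after $\ln(|\mathfrak f_\sigma|+1.03)\le\ln\max(1,|\mathfrak f_\sigma|)+\ln 2.03$) gives
\[
\frac{11.98\,S_{\mathfrak f,N}(\tau)}{\psi(N)}\le \ln N-2\lambda_N+11.98\cdot 1.05+\ln\frac{24}{\pi}-11.98\ln g(\tau)-\ln y+\ln\Bigl(\frac{S_{\mathfrak f,N}(\tau)}{\psi(N)}+\ln 2.03\Bigr).
\]
Here $11.98\cdot1.05+\ln\frac{24}{\pi}\approx12.579+2.033=14.61$, which already exceeds $14.3$ before the additive $\ln2.03\approx0.71$ inside the logarithm is handled. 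That term cannot be dropped the way the $\gamma_2$ case drops its analogue. There $c=\ln10.902\approx2.39$ is absorbed via $\ln(X+c)\le\ln(cX)$, valid for $c>1$ and $X\ge c/(c-1)$; indeed $0.32\approx0.188+\ln(\frac{3}{2\pi}\ln10.902)$. Here $c=\ln2.03<1$, so $\ln(X+c)>\ln X>\ln(cX)$ and the trick is false.

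The statement's $14.3\approx12.579+\ln(\frac{24}{\pi}\ln2.03)\approx14.27$ appears to come from exactly this invalid absorption. So this is not an idea you missed from the paper, but your argument (like the paper's) does not establish the lemma with $14.3$. Your base case also silently loses the $+0.71$. What this route proves honestly is the lemma with:
\begin{itemize}
\item $14.62$ in place of $14.3$ in $a_{\mathfrak f}$;
\item $b_{\mathfrak f}+0.71$ in place of $b_{\mathfrak f}$ in $c_0$;
\item $\ln\ln N_0+c_n+8.49$ (where $8.49\approx11.98\ln 2.03$) in place of $\ln\ln N_0+c_n$ in the recursion.
\end{itemize}
Your monotonicity step still applies, since $(\ln x+c_n+8.49)/x$ is decreasing on $[e,\infty)$ when $c_n\ge0$.

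There are also two smaller slips.

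First, $\sum_{\sigma\in C_{2,N}}\ln(d/a)=\psi(N)(\ln N-2\lambda_N)$, not half of it; for $N=p$ both sides equal $(p-1)\ln p$. It is this full term, times $\frac1{11.98}$, that produces the main term $\frac{\psi(N)}{11.98}(\ln N-2\lambda_N)$. Your halved version is inconsistent with the self-referential inequality you write next.

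Second, $\tilde y_\sigma\ge\sqrt3/2$ is false on $\mathcal F_\theta$, which has cusps at $\pm1$. Only an upper bound is needed for the crude estimate: $\tilde y_\sigma\le N\max(y,1/y)$, because $\gamma\sigma$ is an integer matrix of determinant $N$. This gives $S_{\mathfrak f,N}(\tau)/\psi(N)\le\frac1{5.99}\ln N+b_{\mathfrak f}(\tau)$ for $y\ge1$, which covers the path $\{iy: y\ge1\}$ used later. For $y<1$ there is an extra $\frac{2|\ln y|}{11.98}$.
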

Let $N_0=30$. Using SageMath we use 1 minute of computation to find $$\max_{\substack{N\in[2,N_0]\cap\mathbb Z\\\gcd(N,48)=1}}B(N)\le45.7$$ where
$$\max_{\substack{k\in[0,\psi(N)]\cap\mathbb Z}} h(\Phi_{\mathfrak f,N}(X,\gamma_2(\tau_{k,N})))+\psi(N)\left(\frac{\ln L+1}{L}+3 \ln 2\right)=2\psi(N)(\ln N-2\lambda_N+\ln\ln N+B(N)),$$
optimizing $L$ to be $2^{\frac16}$.\par
Since $j=\frac{(\mathfrak f^{24}-16)^{3}}{\mathfrak f^{24}}$, we have $\mathfrak f\colon\{iy\mid y>1\}\cup\{e^{i\pi\theta}\mid \theta\in\left[\frac{\pi}3,\frac{\pi}2\right]\}\leftrightarrow\mathbb[2^{\frac16},\infty[$.
Using SageMath we plot $c_n(\tau_k)$ for $k\in[0,1]$ and get \autoref{f} with $n=2$ ($n=1$ gives us the same constant) and optimising $L$ to be $\mathfrak f(i)$.
\section{Bounds without \texorpdfstring{$\ln\ln N$}{lnlnN}}
Let $\Gamma$ be a congruence subgroup of level $l$ with a fundamental domain $\mathcal G$. Let $\reallywidetilde{\tau}$ denote $\tau$'s representative in $\mathcal G$.
\begin{lemma}\label{farey}
Let $M$ be a positive integer. Then
\[
I_M := \left[ \frac{1}{M+1}, \frac{1}{M+1}+l \right)
= \bigcup_{k=1}^{M} \bigcup_{\substack{h=1 \\ \gcd(h,k)=1}}^{lk} I_M\left( \frac{h}{k} \right)
\]
where $I_M\left( \frac{h}{k} \right)$ are intervals of the form $[\rho_1, \rho_2)$ containing $\frac{h}{k}$ such that
\[
\frac{1}{2Mk} \leq \frac{h}{k} - \rho_1 \leq \frac{1}{(M+1)k}, \qquad
\frac{1}{2Mk} \leq \rho_2 - \frac{h}{k} \leq \frac{1}{(M+1)k}.
\]
\end{lemma}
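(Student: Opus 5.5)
This is the standard Farey dissection of an interval, as used in the circle method, stretched to length $l$. The plan is to reduce to the classical Farey dissection of order $M$ on a unit interval and then translate by integers.

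First I will recall the classical statement. Let $\mathcal F_M$ be the Farey fractions $\frac hk$ with $1\le k\le M$ and $\gcd(h,k)=1$. Consecutive fractions $\frac{h'}{k'}<\frac hk<\frac{h''}{k''}$ in $\mathcal F_M$ satisfy $hk'-h'k=1$ and $k+k'>M$. I take the mediants as endpoints:
\[
\rho_1=\frac{h+h'}{k+k'},\qquad \rho_2=\frac{h+h''}{k+k''}.
\]
Then
\[
\frac hk-\rho_1=\frac{1}{k(k+k')} \qquad\text{and}\qquad \rho_2-\frac hk=\frac{1}{k(k+k'')}.
\]
Since $M+1\le k+k'\le 2M$, and likewise for $k''$, these distances lie in $\left[\frac1{2Mk},\frac1{(M+1)k}\right]$, which are exactly the required bounds.

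Next I will fix the range. The Farey fractions in $\left(\frac1{M+1},1+\frac1{M+1}\right]$ are $\frac1M,\dots,\frac11$, i.e. $\frac hk$ with $1\le h\le k\le M$ and $\gcd(h,k)=1$. The left mediant of $\frac1M$ is $\frac{0+1}{1+M}=\frac1{M+1}$, because the predecessor of $\frac1M$ is $\frac01$. The right mediant of $\frac11$ is $\frac{1+M}{1+M}+\frac{1}{M+1}\cdot 0$, more precisely $\frac{1+M}{M+1}\cdot\ldots$: its successor is $\frac{M+1}{M}=1+\frac1M$, so the mediant is $\frac{1+(M+1)}{1+M}=1+\frac1{M+1}$. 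Hence the half-open intervals $[\rho_1,\rho_2)$ attached to these fractions tile $\left[\frac1{M+1},1+\frac1{M+1}\right)$ with no overlaps or gaps, since consecutive intervals share their mediant endpoint.

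Then I will translate. Farey neighbours and mediants are preserved by the shift $x\mapsto x+1$: if $\frac hk$ is a fraction then so is $\frac{h+k}{k}$, with $\gcd$ unchanged. So shifting the tiling by $0,1,\dots,l-1$ tiles $I_M$. The fractions that occur are $\frac hk$ with $1\le h\le lk$, $k\le M$ and $\gcd(h,k)=1$, each exactly once. This matches the index set in the statement.

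The only delicate point is the bookkeeping at the two ends: checking that $\frac1{M+1}$ and $\frac1{M+1}+l$ are mediants, so the union is exactly $I_M$, and that the index range $1\le h\le lk$ corresponds precisely to the translated fractions. I expect this endpoint check to be the main, though minor, obstacle.
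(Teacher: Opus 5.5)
Your proof is correct and is essentially the paper's argument. The paper simply cites Cohen's Farey dissection of $\left[\frac1{M+1},1+\frac1{M+1}\right)$ and takes its $l$ integer translates, whereas you reprove that $l=1$ case via mediants and check that the shifts $h\mapsto h+nk$, $0\le n<l$, hit each coprime $h$ with $1\le h\le lk$ exactly once. Please clean up the garbled line about the right endpoint; it should just say that the successor of $\frac11$ is $\frac{M+1}{M}$, so the mediant is $\frac{M+2}{M+1}=1+\frac1{M+1}$.
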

\begin{proof}
$I_M$ is $l$ copies of the one in \cite[Lemma 3]{Coh84}.
\end{proof}
Let $\sigma\in C_{l,N}$. Let $h\in[1,lk]\cap\mathbb Z$, $\gcd(h.k)=1$ and $\delta=\begin{pmatrix}
s & u \\
k & -h
\end{pmatrix}
\in \mathrm{SL}_2(\mathbb{Z})$ be such that $\frac{lb}{d} \in I_M\left(\frac{h}{k}\right)$. Let $\hat{\tau}_\sigma := \delta(\tau_\sigma).$ Let $s$ and $u$ be chosen in such a way (by multiplying $\delta$ by a suitable translation matrix) such that $-\tfrac{1}{2} < \mathrm{Re}(\hat{\tau}_\sigma) \leq \tfrac{1}{2}$.
\begin{lemma}\label{imt}
For
$$M=\left\lfloor\frac d{\sqrt{Ny}}\right\rfloor$$
we have
\begin{itemize}
    \item[(a)] $\operatorname{Im} \hat{\tau}_\sigma \geq \frac{1}{2}$,
    \item[(b)] $\ln \operatorname{Im} \hat{\tau}_\sigma \leq \ln \frac{d^2}{Nyk^2}$, \quad and
    \item[(c)] $\ln \operatorname{Im} \reallywidetilde{\tau_\sigma} \leq \ln \operatorname{Im} \hat{\tau}_\sigma + \ln 4$.
\end{itemize}
\end{lemma}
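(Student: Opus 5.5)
The plan is to compute $\operatorname{Im}\hat\tau_\sigma$ explicitly and then bound it using the Farey-interval estimates of \autoref{farey}. This follows \cite[Section 3]{BGP23}; the only new features are the factor $l$ in $\sigma=\begin{pmatrix}a&lb\\0&d\end{pmatrix}$ and the $l$ copies of the Farey dissection.

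\textbf{Formula for the imaginary part.} First note that $\tau_\sigma=\frac{a\tau+lb}{d}=\frac{lb}{d}+\frac{a}{d}\tau$. Applying $\delta=\begin{pmatrix}s&u\\k&-h\end{pmatrix}$ and using $\operatorname{Im}(\delta z)=\operatorname{Im}z/|kz-h|^2$ gives
\[
\operatorname{Im}\hat\tau_\sigma=\frac{\frac{a}{d}y}{\left|k\left(\frac{lb}{d}+\frac{a}{d}x-\frac hk\right)+ik\frac ady\right|^2}
=\frac{\frac{N}{d^2}y}{k^2\left(\left(\frac{lb}{d}+\frac{a}{d}x-\frac hk\right)^2+\frac{N^2y^2}{d^4}\right)},
\]
where $a/d=N/d^2$. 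Translation matrices do not change the imaginary part.

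\textbf{Part (b).} Dropping the square of the real part in the denominator gives
\[
\operatorname{Im}\hat\tau_\sigma\le\frac{Ny/d^2}{k^2N^2y^2/d^4}=\frac{d^2}{Nyk^2}.
\]
Taking logarithms yields (b).

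\textbf{Part (a).} This is the main step. We need the real-part discrepancy to be at most of size comparable to $Ny/d^2$. By \autoref{farey}, $\left|\frac{lb}{d}-\frac hk\right|\le\frac{1}{(M+1)k}$. With $M=\lfloor d/\sqrt{Ny}\rfloor$ we have $M+1>d/\sqrt{Ny}$, so this gap is less than $\frac{\sqrt{Ny}}{dk}$.

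The term $\frac ad x$ must be absorbed. Following \cite{BGP23}, I would first reduce to $x=0$, or to $\tau$ in the fundamental domain with $|x|\le 1/2$, by shifting $b$. If this is not possible, I would instead apply the Farey lemma to the point $\frac{lb+ax}{d}$ rather than $\frac{lb}{d}$; the interval $I_M$ has length $l$, so it covers any real number modulo $l$. This choice of reference point is what I expect to be the delicate bookkeeping.

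Once the reference point is fixed, write $\varepsilon$ for the real-part gap, so $k^2\varepsilon^2\le\frac{Ny}{d^2}$. Setting $t=\frac{Ny}{d^2}$, and using $kt\le t\cdot\frac{d}{\sqrt{Ny}}$ (from $k\le M$) so that $k^2t^2\le t$, we get
\[
\operatorname{Im}\hat\tau_\sigma\ge\frac{t}{k^2\varepsilon^2+k^2t^2}\ge\frac{t}{t+t}=\frac12,
\]
which is (a).

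\textbf{Part (c).} Here $\hat\tau_\sigma$ has real part in $(-\frac12,\frac12]$ and imaginary part at least $\frac12$. The remaining task is to move it into $\mathcal G$ by an element of $\Gamma$.

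If $\mathcal G$ is a union of translates of $\mathcal F$ under coset representatives of $\Gamma$ in $\mathrm{PSL}_2(\mathbb Z)$, then $\reallywidetilde{\tau_\sigma}=\gamma\hat\tau_\sigma$ for some coset representative $\gamma$ applied after reducing to $\mathcal F$. The reduction from the strip with $\operatorname{Im}\ge\frac12$ to $\mathcal F$ increases the imaginary part by at most a bounded factor: a single inversion $z\mapsto-1/z$ with $|z|\ge\frac12$ multiplies it by at most $4$. The finitely many coset representatives only map into regions whose imaginary part is controlled by that of the $\mathcal F$-point. Hence $\operatorname{Im}\reallywidetilde{\tau_\sigma}\le4\operatorname{Im}\hat\tau_\sigma$, giving (c).

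I expect verifying this last inequality for a general $\mathcal G$ to need the specific shape of $\mathcal G$, for example $\mathcal F_\theta$, where $|z|\ge1$ or a single $S$ suffices. So I would check it case by case for the $\Gamma$ used.
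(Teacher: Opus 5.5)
Your part (b) is correct, but part (c) has a genuine gap. The claim that the coset representatives ``only map into regions whose imaginary part is controlled by that of the $\mathcal F$-point'' is not proved, and you leave the general case to a case-by-case check. Such a check is also delicate: $\hat\tau_\sigma=\delta\tau_\sigma$ with $\delta\in\mathrm{SL}_2(\mathbb Z)$ need not lie in the $\Gamma$-orbit of $\tau_\sigma$. So reducing $\hat\tau_\sigma$ into $\mathcal G$ (e.g.\ by one $S$ into $\mathcal F_\theta$) need not give $\widetilde{\tau_\sigma}$.

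The missing observation is that nothing about $\mathcal G$ is needed. If $\widetilde{\tau_\sigma}=\gamma\tau_\sigma$ with $\gamma\in\Gamma$, then $\widetilde{\tau_\sigma}=\gamma\delta^{-1}\hat\tau_\sigma$ with $\gamma\delta^{-1}=\begin{pmatrix}*&*\\c&e\end{pmatrix}\in\mathrm{SL}_2(\mathbb Z)$. With $Y:=\operatorname{Im}\hat\tau_\sigma\ge\frac12$ from (a),
\[
\operatorname{Im}\widetilde{\tau_\sigma}=\frac{Y}{|c\hat\tau_\sigma+e|^2}\le\begin{cases}Y, & c=0,\\ \dfrac{1}{c^2Y}\le\dfrac1Y\le 4Y, & c\neq0.\end{cases}
\]
This gives (c) for every $\Gamma$ and every fundamental domain at once. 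It also makes unnecessary your unproved claim that a single inversion takes the strip $\{|\operatorname{Re}z|\le\frac12,\ \operatorname{Im}z\ge\frac12\}$ into $\mathcal F$.

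In (a), your second option has to be the definition of $\delta$, not a fallback: apply \autoref{farey} to $\xi:=\operatorname{Re}\tau_\sigma=\frac{ax+lb}{d}$, reduced modulo $l$ into $I_M$. The first option fails, because $\frac{ax}{d}$ cannot be absorbed by changing $b$, and $|x|\le\frac12$ does not suffice. If $\delta$ is chosen from $\frac{lb}{d}$, as in the set-up before the lemma, the same computation only gives $\operatorname{Im}\hat\tau_\sigma>\bigl(1+(1+|x|/y)^2\bigr)^{-1}$. In that case (a) genuinely fails for $x\neq0$. Take $l=1$, $\tau=e^{\pi i/3}$, $N=d=97$, $b=88$: then $M=10$, and the bounds in \autoref{farey} force $\frac{88}{97}\in I_{10}\bigl(\frac{9}{10}\bigr)$. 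But $\operatorname{Im}\hat\tau_\sigma=\operatorname{Im}\tau_\sigma/|10\tau_\sigma-9|^2=\frac{97\sqrt3}{438}\approx0.38$.

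With $\xi$ in place of $\frac{lb}{d}$, your estimates $k^2\varepsilon^2<t$ and $k^2t^2\le t$ do give $\operatorname{Im}\hat\tau_\sigma>\frac12$. This needs $d\ge\sqrt{Ny}$, so that $M\ge1$. The later count of admissible $b$ per interval is unchanged, since the points $\frac{ax+lb}{d}$ are just a translate of the points $\frac{lb}{d}$.
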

\begin{proof}
See \cite[Proof of Lemma 3.2]{BGP23} but replace $b$ with $lb$.
\end{proof}
The number of such $h$ in \autoref{farey} is $l\varphi(k)$, so we first prove the following two lemmas:
\begin{lemma}\label{sum phi/k}
For any positive integer $M$, we have $$\sum_{k=1}^M\frac{\varphi(k)}k\le\frac{6M}{\pi^2}+\ln M+2.$$
\end{lemma}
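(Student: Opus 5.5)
We use the classical identity $\varphi(k)/k=\sum_{d\mid k}\mu(d)/d$. Substituting it and swapping the order of summation gives
\begin{align*}
\sum_{k=1}^M\frac{\varphi(k)}k=\sum_{d=1}^M\frac{\mu(d)}d\left\lfloor\frac Md\right\rfloor .
\end{align*}
Next write $\lfloor M/d\rfloor=M/d-\{M/d\}$ with $0\le\{M/d\}<1$. This splits the sum into a main term $M\sum_{d\le M}\mu(d)/d^2$ and an error term $-\sum_{d\le M}\mu(d)\{M/d\}/d$.

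For the main term, use $\sum_{d=1}^\infty\mu(d)/d^2=6/\pi^2$. The tail is bounded by $\left|\sum_{d>M}\mu(d)/d^2\right|\le\sum_{d>M}1/d^2<1/M$, so the main term is at most $6M/\pi^2+1$.

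For the error term, bound it crudely by
\begin{align*}
\sum_{d\le M}\frac{|\mu(d)|\{M/d\}}d .
\end{align*}
The $d=1$ term vanishes because $\{M\}=0$. The remaining terms are at most $\sum_{d=2}^M 1/d\le \ln M$, using $H_M\le 1+\ln M$. Adding the two pieces gives $6M/\pi^2+\ln M+1$, which is better than the claimed bound by $1$.

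The only step needing care is the tail estimate $\sum_{d>M}d^{-2}\le 1/M$. It follows from comparing with $\int_M^\infty x^{-2}\,dx$, or from telescoping $\frac1{d(d-1)}$, which gives exactly $1/M$. Since there is slack of $1$, we do not need to be careful about signs in the error term. A direct check for small $M$ (for instance $M=1$, where the left side is $1\le 6/\pi^2+2$) confirms there are no edge-case problems.
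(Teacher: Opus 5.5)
Your proof is correct and follows the paper's argument essentially step for step. Both expand $\varphi(k)/k=\sum_{d\mid k}\mu(d)/d$, write $\lfloor M/d\rfloor=M/d-\{M/d\}$, bound the tail of $\sum\mu(d)/d^2$ by $1/M$, and bound the fractional-part term by a harmonic sum. Your one refinement is valid: the $d=1$ term vanishes since $\{M\}=0$, so that term is at most $\sum_{d=2}^M 1/d\le\ln M$ rather than $1+\ln M$, which gives the slightly stronger constant $1$ in place of the paper's $2$.
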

\begin{proof}
We have
\begin{align*}
\sum_{k=1}^M\frac{\varphi(k)}k&=\sum_{k=1}^M\sum_{n|k}\frac{\mu(n)}n\\
&=\sum_{n=1}^M\sum_{q=1}^{\left\lfloor\frac Mn\right\rfloor}\frac{\mu(n)}n\tag{$qn=k$}\\
&=\sum_{n=1}^M\frac{\mu(n)}n\left(\frac Mn-\operatorname{frac}\left(\frac Mn\right)\right).\phantom{\qedhere}\end{align*}
where $\operatorname{frac}\left(\frac Mn\right)$ denotes the fractional part of $\frac Mn$. We split the computation into two. Firstly,
$$
-\sum_{n=1}^M\frac{\mu(n)}n\operatorname{frac}\left(\frac Mn\right)\le\sum_{n=1}^M\frac1n\le1+\int_1^M\frac{\mathrm du}u=1+\ln M.$$
Secondly,
\begin{align*}\sum_{n=1}^M\frac{\mu(n)}{n^2}=\frac6{\pi^2}-\sum_{n=M+1}^\infty\frac{\mu(n)}{n^2}\le\frac6{\pi^2}+\sum_{n=M+1}^\infty\frac1{n^2}\le\frac6{\pi^2}+\int_M^{\infty}\frac{\mathrm du}u=\frac6{\pi^2}+\frac1M.\tag*{\qedsymbol}
\end{align*}\end{proof}
\begin{lemma}\label{sum phi}
For any positive integer $M$, we have
$$
\sum_{k=1}^M\varphi(k) \le \frac{3}{\pi^2} M^2 + M\ln M + 2M + \frac{1}{2}.
$$ 
\end{lemma}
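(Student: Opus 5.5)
We mirror the proof of \autoref{sum phi/k}, writing $\varphi(k)=\sum_{n\mid k}\mu(n)\frac kn$ and swapping the order of summation. With $k=qn$,
\begin{align*}
\sum_{k=1}^M\varphi(k)=\sum_{n=1}^M\mu(n)\sum_{q=1}^{\lfloor M/n\rfloor}q=\sum_{n=1}^M\frac{\mu(n)}2\left\lfloor\frac Mn\right\rfloor\left(\left\lfloor\frac Mn\right\rfloor+1\right).
\end{align*}
Write $\lfloor M/n\rfloor=\frac Mn-\theta_n$ with $\theta_n:=\operatorname{frac}\left(\frac Mn\right)\in[0,1)$. Expanding gives
\begin{align*}
\frac12\left\lfloor\frac Mn\right\rfloor\left(\left\lfloor\frac Mn\right\rfloor+1\right)=\frac{M^2}{2n^2}+\frac Mn\left(\frac12-\theta_n\right)+\frac{\theta_n^2-\theta_n}2 .
\end{align*}
We then bound the three resulting sums separately, term by term.

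\textbf{Main term.} We have $\frac{M^2}2\sum_{n\le M}\frac{\mu(n)}{n^2}$. Exactly as in the second half of the proof of \autoref{sum phi/k}, this is at most $\frac{M^2}2\left(\frac6{\pi^2}+\frac1M\right)=\frac3{\pi^2}M^2+\frac M2$.

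\textbf{Middle term.} We have $M\sum_{n\le M}\frac{\mu(n)}n\left(\frac12-\theta_n\right)$. Since $|\frac12-\theta_n|\le\frac12$ and $|\mu(n)|\le1$, this is at most $\frac M2\sum_{n\le M}\frac1n\le\frac M2(1+\ln M)$. This falls well within the allowed $M\ln M$.

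\textbf{Last term.} We have $\sum_{n\le M}\mu(n)\frac{\theta_n^2-\theta_n}2$. Since $\theta_n^2-\theta_n\in[-\frac14,0]$, its absolute value is at most $\frac M8$.

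Adding the three bounds gives
\begin{align*}
\sum_{k=1}^M\varphi(k)\le\frac3{\pi^2}M^2+\frac M2\ln M+\frac98M,
\end{align*}
which is below $\frac3{\pi^2}M^2+M\ln M+2M+\frac12$. The slack means the crude tail bound $\sum_{n>M}n^{-2}\le\frac1M$ (stated via $\int_M^\infty$ in the previous lemma) suffices, and the $+\frac12$ is not even needed.

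The only delicate step is the bookkeeping of the floor expansion. Keeping $\theta_n$ exact, rather than bounding the floor by $\frac Mn$ (which would lose the sign information on the $\mu(n)$ terms), is what makes every error term absolutely summable with the stated constants. An alternative route, if this expansion gets messy, is partial summation from \autoref{sum phi/k}: write $\varphi(k)=k\cdot\frac{\varphi(k)}k$ and use $A(t):=\sum_{k\le t}\frac{\varphi(k)}k\le\frac{6t}{\pi^2}+\ln t+2$, giving $\sum\varphi(k)=MA(M)-\int_1^MA(t)\,\mathrm dt$. However, that approach needs a \emph{lower} bound on $A$, which we do not have, so the direct expansion above is the plan.
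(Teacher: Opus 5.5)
Your proof is correct and follows essentially the paper's route: expand via M\"obius inversion, write $\lfloor M/n\rfloor=\frac Mn-\theta_n$, and bound the three resulting sums with the same tail estimate $\sum_{n>M}n^{-2}\le\frac1M$, the harmonic bound $\sum_{n\le M}\frac1n\le1+\ln M$, and a trivial bound on the term quadratic in $\theta_n$. The only difference is the starting identity: the paper uses, without stating it, $\sum_{k\le M}\varphi(k)=\frac12\bigl(1+\sum_{n\le M}\mu(n)\lfloor M/n\rfloor^2\bigr)$, whereas you derive $\sum_{n\le M}\mu(n)\frac12\lfloor M/n\rfloor(\lfloor M/n\rfloor+1)$ directly from $\varphi=\mu*\mathrm{id}$; the two agree because $\sum_{n\le M}\mu(n)\lfloor M/n\rfloor=1$, and your form yields the centred factor $\frac12-\theta_n$ and the smaller term $\frac{\theta_n^2-\theta_n}2$, which is where your sharper bound $\frac3{\pi^2}M^2+\frac12M\ln M+\frac98M$ comes from.
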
\label{sumphi}
\begin{proof}
Let
$$
\Phi(M) = \frac{\Psi(M)+1}{2},
$$
where
$$
\Psi(M) = \sum_{1 \leq n \leq M} \mu(n) \left\lfloor \frac{M}{n} \right\rfloor^2.
$$
Then
\begin{align*}
\Psi(M) &= \sum_{1 \leq n \leq M} \mu(n)\left(\frac{M}{n} - \operatorname{frac}\left(\frac{M}{n}\right)\right)^2\\
&= M^2 \sum_{1 \leq n \leq M} \frac{\mu(n)}{n^2} - 2M \sum_{1 \leq n \leq M} \frac{\mu(n)}{n}\operatorname{frac}\left(\frac{M}{n}\right) + \sum_{1 \leq n \leq M} \mu(n) \operatorname{frac}^{2}\left(\frac{M}{n}\right).
\end{align*}
We split the computation into three. Firstly,
\begin{align*}
\sum_{1 \leq n \leq M} \frac{\mu(n)}{n^2} &= \sum_{1 \leq n \leq 
\infty} \frac{\mu(n)}{n^2} - \sum_{M+1 \leq n \leq \infty} \frac{\mu(n)}{n^2} \\
&= \frac{6}{\pi^2} - \sum_{M+1 \leq d \leq \infty} \frac{\mu(d)}{n^2} \\
&< \frac{6}{\pi^2} + \sum_{M+1 \leq d \leq \infty} \frac{1}{n^2} \\
&< \frac{6}{\pi^2} + \int_M^\infty \frac{\mathrm du}{u^2} \\
&= \frac{6}{\pi^2} + \frac{1}{M}.\phantom{\qedhere}
\end{align*}
Secondly,
\begin{align*}
-2M \sum_{1 \leq n \leq M} \frac{\mu(n)}{n}\operatorname{frac}\left(\frac{M}{n}\right) &< 2M \sum_{1 \leq n \leq M} \frac{1}{n}< 2M \left(1+\int_1^M \frac{\mathrm du}{u}\right)= 2M(1+\ln M).
\end{align*}
Thirdly,
$$
\sum_{1 \leq n \leq M} \mu(n)\operatorname{frac}^{2}\left(\frac{M}{n}\right) < \sum_{1 \leq n \leq M} 1 = M.
$$
So in total we get \begin{align*}
\Psi(M) < \frac{6}{\pi^2} M^2 + 2M\ln M + 4M.\tag*{\qedsymbol}
\end{align*}\end{proof}
We are now able improve \cite[Lemma 3.3]{BGP23}:
\begin{lemma}\label{big d} Suppose $N\ge 1$ is coprime to $l$. Then
$$
\sum_{\substack{\gamma \in C_{l,N} \\ d\geq \sqrt{N y}}} \ln \operatorname{Im} \reallywidetilde{\tau_\gamma} \leq\left(3.97+1.39l\right) \psi(N).
$$
\end{lemma}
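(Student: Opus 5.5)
We follow the structure of \cite[Proof of Lemma 3.3]{BGP23}, with two changes: $b$ is replaced by $lb$, and the counting of Farey arcs is done with \autoref{sum phi/k} and \autoref{sum phi} instead of cruder estimates.

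\textbf{Setup.} Fix $a,d$ with $ad=N$ and $d\ge\sqrt{Ny}$, and put $M=\lfloor d/\sqrt{Ny}\rfloor\ge1$. By \autoref{imt}(b),(c), each $\sigma$ with $\frac{lb}{d}\in I_M(\frac hk)$ satisfies
$$\ln\operatorname{Im}\reallywidetilde{\tau_\sigma}\le\ln\frac{d^2}{Nyk^2}+\ln4 .$$
By \autoref{imt}(a) and (c), the left side is also at least $\ln\frac12$ up to the $\ln 4$ correction, so the sum is meaningful term by term.

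\textbf{Counting points per arc.} The points $\frac{lb}{d}$ with $0\le b<d$ lie in $[0,l)$. After the harmless shift by $\frac1{M+1}$, they lie in the interval $I_M$ of \autoref{farey}, and consecutive points are spaced $\frac ld$ apart. Each arc $I_M(\frac hk)$ has length at most $\frac{2}{(M+1)k}$. Hence it contains at most
$$\frac{2d}{l(M+1)k}+1\le\frac{2\sqrt{Ny}}{lk}+1$$
of these points. For each $k\le M$ there are $l\varphi(k)$ arcs.

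\textbf{Summing over one value of $d$.} Write $\ln\frac{d^2}{Nyk^2}=2\ln\frac{d}{\sqrt{Ny}}-2\ln k$ and add $\ln 4$. Summing over arcs and $k\le M$, the dominant part is
$$\sum_{k\le M}l\varphi(k)\Bigl(\frac{2\sqrt{Ny}}{lk}+1\Bigr)\Bigl(2\ln\frac{M+1}{k}+\ln4\Bigr).$$
This is estimated as follows.
\begin{itemize}
\item The main term involves $\sum_k\frac{\varphi(k)}{k}\ln\frac{M+1}{k}$. Apply partial summation using \autoref{sum phi/k}, which gives density $\frac6{\pi^2}$. This yields about $\frac6{\pi^2}M$ times the constant $\int_0^1\ln(1/t)\,dt=1$.
\item The term with $\ln4$ similarly gives $\frac6{\pi^2}\ln4\cdot M$ from \autoref{sum phi/k}.
\item The ``$+1$'' terms are controlled by \autoref{sum phi}, which gives $\frac{3}{\pi^2}M^2$ multiplied by $l$ and a log factor.
\end{itemize}
Since $M\le\frac{d}{\sqrt{Ny}}$, the main contribution is $\ll\sqrt{Ny}\cdot\frac{d}{\sqrt{Ny}}=d$ and does not involve $l$. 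The ``$+1$'' contribution is $\ll l\,M^2\le l\,\frac{d^2}{Ny}$. Because $d\ge\sqrt{Ny}$, we have $\frac{d^2}{Ny}\le\frac{d}{\sqrt{Ny}}\cdot\frac{d}{\sqrt{Ny}}$. Then, using that the number of $b$ is only $d$, the whole per-$d$ sum is bounded by $(c_1+c_2l)\,d$, with explicit constants read off from \autoref{sum phi/k} and \autoref{sum phi}.

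\textbf{Summing over $d$.} Finally, sum over $d\mid N$ with $d\ge\sqrt{Ny}$. The $b$'s with $\gcd(a,b,d)=1$ number $\frac{d\varphi(\gcd(a,d))}{\gcd(a,d)}$. Dropping the gcd condition costs a factor that is absorbed, exactly as in \cite{BGP23}, by comparing $\sum_{d\mid N}d$ with $\psi(N)$ via the multiplicative structure. This gives $(3.97+1.39l)\psi(N)$.

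\textbf{Main obstacle.} The delicate step is making the constants explicit. One must handle $M$ small, including $M=1$, where the lower-order terms $\ln M+2$ and $M\ln M+2M+\frac12$ from \autoref{sum phi/k} and \autoref{sum phi} are not negligible. One must also control the $\ln\frac{M+1}{k}$ weight uniformly in $M$. I would first bound the per-$d$ sum by $\max_M F(M)/M$ for an explicit function $F$, and then check its supremum numerically. This supremum is where the constants $3.97$ and $1.39$ come from.
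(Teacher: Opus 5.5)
Your skeleton (Farey dissection, per-arc counting, \autoref{sum phi/k} and \autoref{sum phi}) matches the paper, but the step that turns per-$d$ bounds into a multiple of $\psi(N)$ fails.

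\textbf{Dropping the gcd condition.} You drop $\gcd(a,b,d)=1$, bound each $d$ by $(c_1+c_2l)d$, and say the loss is absorbed ``exactly as in \cite{BGP23}''. It cannot be absorbed. Summing gives $(c_1+c_2l)\sum_{d\mid N,\ d\ge\sqrt{Ny}}d$, and this divisor sum can exceed $\psi(N)$ by a fixed factor. For $N=4^m$ (admissible for $l=1,3$) and $y=1$ it equals $2^{2m+1}-2^m$, while $\psi(N)=3\cdot2^{2m-1}$, so the ratio tends to $4/3$.

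The paper never drops the condition. With $r=\gcd(a,d)$, the admissible $b$ are those coprime to $r$, and an interval of length $\lambda$ contains at most $\varphi(r)\lceil\lambda/r\rceil$ of them. So an arc holds at most $\varphi(r)\bigl(\frac{2d}{l(M+1)kr}+1\bigr)$ admissible points. The main term becomes $\frac{4d\varphi(r)}{r}\cdot\frac1{M+1}\sum_{k\le M}\frac{\varphi(k)}{k}\ln\frac{M+1}{k}\le2.58\,\frac{d\varphi(r)}{r}$. The ``$+1$'' term becomes $2l\varphi(r)\sum_{k\le M}\varphi(k)\ln\frac{M+1}{k}\le1.39\,l\,\varphi(r)M^2$. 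The identity $\sum_{d\mid N}\frac{d\varphi(r)}{r}=\psi(N)$ then finishes with no loss.

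\textbf{The ``$+1$'' term.} Your justification does not work. The inequality $\frac{d^2}{Ny}\le\frac{d}{\sqrt{Ny}}\cdot\frac{d}{\sqrt{Ny}}$ is an identity. The remark ``the number of $b$ is only $d$'' cannot bound a weighted sum over overcounted arcs. There is also no log factor, since $\sum_{k\le M}\varphi(k)\ln\frac{M+1}{k}<0.694M^2$. What you need is $\frac{d^2}{Ny}=\frac{d}{ay}$, which holds because $N=ad$. Hence $\varphi(r)M^2\le\frac{d\varphi(r)}{ay}\le\frac{d\varphi(r)}{r}$, using $r\mid a$ and $ay\ge r$ (e.g.\ $y\ge1$).

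\textbf{The constants.} Your explicit choices push them above the target. Folding $\ln4$ into the per-arc weight charges it to arc counts that exceed the number of matrices. The paper applies \autoref{imt}(c) first and pays exactly $\ln4\cdot\psi(N)$. Replacing $\frac{d}{M+1}$ by $\sqrt{Ny}$ and then using $\sqrt{Ny}\le\frac dM$ makes your main-term constant $4\ln2\approx2.77$ already at $M=1$. The paper's constant is $4\cdot0.645=2.58$. Since $3.97\approx\ln4+2.58$, the numerical supremum you defer to would not produce $3.97$ and $1.39$ on your route.
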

\begin{proof}We have
$$
\begin{aligned}
\sum_{\substack{\gamma \in C_N \\
d\geq \sqrt{N y}}} \ln \operatorname{Im} \reallywidetilde{\tau_\gamma} &\leq\sum_{\substack{d \mid N \\
d \geq \sqrt{N y}}} \sum_{\substack{0 \leq b<d \\
\gcd(b,r)=1}} \ln \operatorname{Im} \hat{\tau}_\gamma+\ln (4) \psi(N)\\
\sum_{\substack{d \mid N \\
d \geq \sqrt{N y}}} \sum_{\substack{0 \leq b<d \\
\gcd(b,r)=1}} \ln \operatorname{Im} \hat{\tau}_\gamma & \leq\sum_{\substack{d \mid N \\
d \geq \sqrt{N y}}} \sum_{k=1}^M\sum_{\substack{h=1\\\gcd(h,k)=1}}^{lk} \sum_{\substack{\frac{lb}{d} \in I_M\left(\frac{h}{k}\right)\\\gcd(b,r)=1}}2 \ln \frac{d}{k\sqrt{Ny}}.
\end{aligned}
$$
The number of terms in the inner sum is bounded by the number of integers $b$ with $\gcd(b,r)=1$ in $d I_M\left(\frac{h}{k}\right)$. By \autoref{farey}, the length of $I_M\left(\frac{h}{k}\right)$ is bounded above by $\frac{2}{(M+1) k}$. For an interval of length $r$, we have $\varphi(r)$ integers coprime with $r$. So \begin{align*}\#\left\{\frac{lb}d\in I_M\left(\frac hk\right):\gcd(b,r)=1\right\}&\le\varphi(r)\left\lceil\frac{2d}{l(M+1)kr}\right\rceil\\
&<\varphi(r)\left(\frac{2d}{l(M+1)kr}+1\right).
\end{align*}
So we get
\begin{gather*}\sum_{\substack{d \mid N \\
d \geq \sqrt{N y}}} \sum_{\substack{0 \leq b<d \\
\gcd(b,r)=1}} \ln \operatorname{Im} \hat{\tau}_\gamma\le
\sum_{\substack{d \mid N \\
d \geq \sqrt{N y}}} \sum_{k=1}^M \frac{4 d \varphi(r)\varphi(k)}{(M+1)kr}\ln \frac{d}{k\sqrt{Ny}}+\sum_{\substack{d \mid N \\
d \geq \sqrt{N y}}} \sum_{k=1}^M 2lk\varphi(r)\ln \frac{d}{k\sqrt{Ny}}.\numberthis\label{split}\end{gather*}

We deal with the first sum in the right-hand side:
$$
\begin{aligned}
& \sum_{\substack{d \mid N \\
d \geq \sqrt{N y}}} \sum_{k=1}^M \frac{4 d \varphi(r)\varphi(k)}{(M+1) kr}\ln \frac{d}{k\sqrt{Ny}}\le\sum_{\substack{d \mid N \\
d \geq \sqrt{N y}}} \sum_{k=1}^M\frac{4 d \varphi(r)\varphi(k)}{(M+1)kr}\ln \frac{M+1}{k}.
\end{aligned}
$$
Let $f(M):=\frac1{M+1}\sum_{k=1}^M\frac{\varphi(k)}k\ln\frac{M+1}k$. Then
\begin{align*}
f(M)&=\frac1{M+1}\left(\sum_{k=1}^M\frac{\varphi(k)}k\ln\Biggl(1+\frac1M\right)
+\int_1^M\frac1u\sum_{k=1}^u\frac{\varphi(k)}k\mathrm du\Biggr)\tag{Abel's summation formula for the sequence $\left(\frac{\varphi(k)}k\right)_{k=1}^\infty$}\\
&\le\frac1{M+1}\Biggl(\frac{6(M-1)}{\pi^2}+\frac{\ln^2M}2+2\ln M
+\left(\frac{6M}{\pi^2}+\ln M+2\right)\ln\left(1+\frac1M\right)\Biggr)\tag{by \autoref{sum phi/k}}\\
&=:g(M).
\end{align*}
Setting $g'(x)=0$ shows $g$ is decreasing for $x\ge 4$. SageMath 10.4 shows after 2 minutes of computation that $\max_{M\in[1,10^3]\cap\mathbb Z}f(M)<0.608$ and so $f(M)\le g(10^3)<0.645.$\par
So we get 
\begin{align*}
\sum_{\substack{d \mid N \\
d \geq \sqrt{N y}}} \sum_{k=1}^M \frac{4 d \varphi(r)\varphi(k)}{(M+1) kr}\ln \frac{d}{k\sqrt{Ny}}&<\sum_{\substack{d \mid N \\d \geq \sqrt{N y}}}\frac{2.58d \varphi(r)}{r}\\
&\le 2.58\psi(N).\tag{$\frac{d\varphi(r)}r=\#\{b\in\frac{\mathbb Z}{d\mathbb Z}\mid\gcd(a,b,d)=1\}$}
\end{align*}
We now deal with the second sum in the right-hand side of \eqref{split}. By Abel's summation formula for the sequence $(\varphi(k))_{k=1}^\infty$ we have
\begin{align*}
f(M)M^2&:=\sum_{k=1}^M\varphi(k)\ln\frac{M+1}k\\
&=\ln \left(1+\frac1M\right)\sum_{k=1}^M \varphi(k)+\int_1^M\frac1u\left(\sum_{k=1}^u\varphi(k)\right)\mathrm du\\
&\le\ln \left(1+\frac1M\right)\left(\frac{3}{\pi^2} M^2 +M\ln M +2M + \frac12\right)+\frac{3(M^2-1)}{2\pi^2}+M\ln M +M-1\\
&\quad+\frac{\ln M}2\tag{by \autoref{sum phi}}\\
&=:g(M)M^2.
\end{align*}
Setting $g'(x)=0$ shows $g$ is decreasing for $x\ge 2$. SageMath 10.4 shows after a minute of computation that $\max_{M\in[1,10^3]\cap\mathbb Z}f(M)<0.694$ and $g(10^3)<0.16$, so $f(M)<0.694$.\par
\begin{align*}
\sum_{\substack{d \mid N \\
d \geq \sqrt{N y}}} \sum_{k=1}^M 2\varphi(r)\varphi(k)\ln \frac{d}{k\sqrt{Ny}}<\sum_{\substack{d \mid N \\d \geq \sqrt{N y}}}1.39\varphi(r)M^2\le 1.39\psi(N).\altqedhere
\end{align*}
\end{proof}
\begin{remark}
We improve the constant in \cite{BGP23} by $1.8459$, so we have proved \autoref{improve}.
\end{remark}
\subsection{The modular function \texorpdfstring{$\gamma_2$}{gamma_2}}
Using SageMath 10.4 with a minute of computation time we obtain $$\max_{\substack{N\in[1,20]\cap\mathbb Z\\\gcd(N,3)=1}}B(N)\le 3.36,$$ where
$$\max _{k\in[0,\psi(N)]\cap\mathbb Z} h(\Phi_{\gamma_2,N}(X,\gamma_2(\tau_{k,N})))+\psi(N)\left(\frac{\ln L+1}{L}+3 \ln 2\right)=2\psi(N)(\ln N-2\lambda_N+B(N)).$$
optimising $L$ to be $12$. So we let $N>20$.\par
Adding the bounds in \autoref{big d} and \cite[Lemma 3.5]{BGP23} (which still holds for $C_{l,N}$ since the proof doesn't depend on $b$), we obtain
$$
\sum_{\sigma \in C_{N}} \ln \operatorname{Im} \reallywidetilde{\tau_\sigma} <\psi(N)(5.73+\ln \operatorname{Im}\tau).
$$
So we get
$$
\begin{aligned}
S_{\gamma_2,N}(\tau) &<2 \psi(N)\left(\ln N-2 \lambda_N+0.188\right)
+2 \psi(N)(5.73+\ln \operatorname{Im}\tau)
-\psi(N) \ln \left(|\eta^{8}(\tau)|(\operatorname{Im}  \tau)^2\right).
\end{aligned}
$$
By \eqref{S_N} we get
\begin{align*}h(\Phi_{\gamma_2,N})
&< 2 \psi(N)\biggl(\ln N-2 \lambda_N+7.31+\max _{L\le \gamma_2(\tau)\le 2L}\left(\ln \operatorname{Im}\tau-\frac12 \ln \left(|\eta^{8}(\tau)|(\operatorname{Im}  \tau)^2\right)\right)+
\frac{\ln L+1}{2L}\biggr).
\end{align*}
Using SageMath 10.4 to find the constant via plot and optimising $L$ to be $12$ we get \autoref{gammanew}.

\subsection{The growth rate of \texorpdfstring{$h(\Phi_{\gamma_2,N})$}{h(F_g2N)}}
We will show that our bound in \autoref{gammanew} has the right asymptotic behaviour. The core lemma is:
\begin{lemma}For every $N \ge 1$ coprime to $3$ we have
 $S_{\gamma_2,N}(\tau) \leq 2 \ln (\psi(N)+1)+\psi(N) \ln \max \{1,|\gamma_2(\tau)|\}\allowbreak+h\left(\Phi_{\gamma_2,N}\right)$.
\end{lemma}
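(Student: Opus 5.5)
We will bound $S_{\gamma_2,N}(\tau)$, which is $\ln$ of the Mahler measure of the one-variable polynomial $P(X):=\Phi_{\gamma_2,N}(X,\gamma_2(\tau))$, by the logarithm of the sum of the moduli of its coefficients. We then estimate that sum directly using $h(\Phi_{\gamma_2,N})$. This is the analogue of the corresponding lemma in \cite{BGP23} for $\Phi_N$.

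First, by \autoref{product prop} applied to $\gamma_2$ (level $3$, $N$ coprime to $3$), we have
$$P(X)=\prod_{\sigma\in C_{3,N}}\bigl(X-(\gamma_2)_\sigma(\tau)\bigr).$$
This is a monic polynomial of degree $\psi(N)$ in $X$ whose roots are the $(\gamma_2)_\sigma(\tau)$. By the definition of the Mahler measure, $\ln M(P)=\sum_{\sigma}\ln\max(1,|(\gamma_2)_\sigma|)=S_{\gamma_2,N}(\tau)$. Jensen's formula (or just the triangle inequality applied to the Landau-type bound) gives $M(P)\le\sum_k|p_k|$, where the $p_k$ are the coefficients of $P$. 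It also gives the cruder bound $M(P)\le(\deg P+1)\max_k|p_k|$.

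Second, we bound the coefficients $p_k$. Write $\Phi_{\gamma_2,N}(X,Y)=\sum_{k,m}c_{k,m}X^kY^m$. The degree of $\Phi_{\gamma_2,N}$ in $Y$ is at most $\psi(N)$: by the symmetry $\Phi_{\gamma_2,N}(X,Y)=\pm\Phi_{\gamma_2,N}(Y,X)$, or more directly by comparing pole orders at $\hat\infty$, each coefficient of $X^k$ in the product is a polynomial in $\gamma_2$ of degree at most $\psi(N)$. Then
$$|p_k|\le\sum_{m=0}^{\psi(N)}|c_{k,m}||\gamma_2(\tau)|^m\le(\psi(N)+1)e^{h(\Phi_{\gamma_2,N})}\max\{1,|\gamma_2(\tau)|\}^{\psi(N)}.$$
Combining this with the first step gives
$$M(P)\le(\psi(N)+1)^2e^{h(\Phi_{\gamma_2,N})}\max\{1,|\gamma_2(\tau)|\}^{\psi(N)}.$$
Taking logarithms yields exactly the claimed inequality.

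The main obstacle is justifying that $\deg_Y\Phi_{\gamma_2,N}\le\psi(N)$. To do this we compute the pole order at $\hat\infty$ of the elementary symmetric functions of the $(\gamma_2)_\sigma$. The $q$-expansion $\gamma_2=q^{-1/3}(1+\dots)$ shows that $(\gamma_2)_\sigma$ has a pole of order $a/(3d)$ in $q$. So the $k$-th elementary symmetric function has pole order at most $\sum_\sigma a/(3d)$, which is at most $\psi(N)\cdot\max(a/d)/3\le N\psi(N)/3$. This bound is not directly $\psi(N)$ in terms of $\gamma_2$-degree, so this route needs care. The cleaner route is to use the symmetry of $\Phi_{\gamma_2,N}$ in $X$ and $Y$ (up to sign), which follows from the relation $\tau\mapsto -1/(N\tau)$ swapping $\gamma_2$ and $(\gamma_2)_N$ (for $N$ coprime to $3$, possibly up to a cube root of unity that is fixed by the Galois structure). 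Together with $\deg_X=\psi(N)$, this symmetry gives the needed degree bound.
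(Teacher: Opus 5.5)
Your proposal is correct and follows the same route as the paper, which defers to the proof of Lemma 4.1 in \cite{BGP23} with $j$ replaced by $\gamma_2$. That route bounds the Mahler measure of $\Phi_{\gamma_2,N}(X,\gamma_2(\tau))$ by the sum of the moduli of its coefficients, and bounds each coefficient by $(\psi(N)+1)e^{h(\Phi_{\gamma_2,N})}\max\{1,|\gamma_2(\tau)|\}^{\psi(N)}$. Your symmetry justification of $\deg_Y\Phi_{\gamma_2,N}\le\psi(N)$, which the paper leaves implicit, is sound: the cube-root-of-unity twist you worry about does not occur, since $\gamma_2(-1/\tau)=\gamma_2(\tau)$ exactly, and your abandoned pole-order route also works, because $\sum_{\sigma\in C_{3,N}}a/d=\psi(N)$ exactly.
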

\begin{proof}See \cite[Proof of Lemma 4.1]{BGP23} but replace $j$ with $\gamma_2$.\end{proof}
To obtain a lower bound on $h\left(\Phi_{\gamma_2,N}\right)$, it is thus enough to bound $S_{\gamma_2,N}(\tau)$ from below. Similar to \cite{BrPa22}, we have
\begin{align*}
S_{\gamma_2,N}(\tau)&=\sum_{\sigma \in C_{N}} \ln \max \left(\left|\reallywidetilde{\eta^{8}}_{\sigma}\right|,\left|(\gamma_2)_{\sigma} \reallywidetilde{\eta^{8}}_{\sigma}\right|\right)
+2\sum_{\sigma \in C_{N}}\left(\ln \operatorname{Im} \left(\reallywidetilde{\tau_{\sigma}}\right)-\ln\operatorname{Im} \left(\tau_{\sigma}\right)\right)
-\psi(N) \ln|\eta^{8}(\tau)|\\
&>-1.9\psi(N)+2\psi(N)\left(\ln N-2 \lambda_N-\ln \operatorname{Im}  \tau\right)-\psi(N) \ln|\eta^{8}(\tau)|.
\end{align*}
So $h(\Phi_{\gamma_2,N})\in2\psi(N)(\ln N-2\lambda_N+O_{N\to\infty}(1))$.

\subsection{The Weber modular function \texorpdfstring{$\mathfrak f$}{f}}
Using SageMath 10.4 with a minute of computation we obtain $$\max_{\substack{N\in[1,\le 30]\cap\mathbb Z\\\gcd(N,48)=1}}B(N)\le 46.8$$ where
$$\max _{k\in[0,\psi(N)]\cap\mathbb Z} h(\Phi_{\mathfrak f,N}(X,\mathfrak f(\tau_{k,N})))+\psi(N)\left(\frac{\ln L+1}{L}+3 \ln 2\right)=\frac{\psi(N)}{11.98}(\ln N-2\lambda_N+B(N)),$$
optimising $L$ to be $2^{\frac16}$. So we let $N>30$.\par
Adding the bounds in \autoref{big d} and \cite[Lemma 3.5]{BGP23}, we obtain
$$
\sum_{\gamma \in C_{2,N}} \ln \operatorname{Im} \reallywidetilde{\tau_\gamma} <\psi(N)(7.12+\ln \operatorname{Im}\tau).
$$
We have
$$
\begin{aligned}
S_{\mathfrak f,N}(\tau) &<\frac1{11.98}\psi(N)\left(\ln N-2 \lambda_N+11.98\cdot1.05\right)
+\frac1{11.98}\psi(N)(7.12)
-\psi(N) \ln(g(\tau)).
\end{aligned}
$$
So
\begin{align*}h(\Phi_{\mathfrak f,N})&<\frac{1}{11.98} \psi(N)\biggl(\ln N-2 \lambda_N+53+\max _{L\le \mathfrak f(\tau)\le 2L}(-11.98 \ln \left(g(\tau))\right)+11.98\cdot
\frac{\ln L+1}{L}\biggr).
\end{align*}
Using SageMath 10.4 to find the constant via plot and optimising $L$ to be $2^{\frac16}$ we get \autoref{fnew}.

\subsection{A lower bound for \texorpdfstring{$h(\Phi_{\mathfrak f,N})$}{h(Ff,N)}}
We instead let $\varepsilon=0$ in the definition of $g$ and get
\begin{lemma}\label{delta} For $\tau\in\mathcal F_{\theta}$ we have $\begin{aligned}\ln\max(g(\tau),|\mathfrak f(\tau)|g(\tau))>-0.5.
\end{aligned}$
\end{lemma}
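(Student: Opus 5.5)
With $\varepsilon=0$ we have
$$g(\tau)=\frac{|\eta(\tfrac{\tau}{2})\eta(2\tau)|^{1/3}}{|\eta(\tau)|^{1/3}},$$
and I will show $F(\tau):=\ln\max(g(\tau),|\mathfrak f(\tau)|g(\tau))>-0.5$ on $\mathcal F_\theta$ by the same mechanism as the upper bound in the earlier lemma of the same name.

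\textbf{Step 1: weight-zero behaviour under $\Gamma_\theta$.} At $\varepsilon=0$ the weight computation shows $g(S\tau)=|\tau|^{1/6}\cdot|\tau|^{-1/6}g(\tau)=g(\tau)$. The factor $\sqrt{2i\tau}\sqrt{i\tau/2}$ in the numerator has modulus $|\tau|$, raised to the power $\frac16$. The denominator picks up $|\tau|^{1/6}$ from $|\eta(\tau)|^{1/3}$. Since $|\eta(\tau+1)|=|\eta(\tau)|$, under $T^2$ the factors $|\eta(\tau/2)|$, $|\eta(2\tau)|$ and $|\eta(\tau)|$ are unchanged, so $g$ is $\Gamma_\theta$-invariant. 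Because $|\mathfrak f|$ is also $\Gamma_\theta$-invariant, $F$ is a well-defined continuous function on $\Gamma_\theta\backslash\mathbb H$. The quantity whose minimum we need is therefore governed by the two cusps of $\Gamma_\theta$, namely $\infty$ and $1$, together with the finite boundary of $\mathcal F_\theta$.

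\textbf{Step 2: the cusps.} At $\infty$, write $q=e^{2\pi i\tau}$. The leading exponents are $\frac{1}{48}$ for $\eta(\tau/2)$, $\frac{1}{12}$ for $\eta(2\tau)$ and $\frac{1}{24}$ for $\eta(\tau)$, so $g\sim|q|^{\frac13(\frac1{48}+\frac1{12}-\frac1{24})}=|q|^{1/48}$. Since $|\mathfrak f|\sim|q|^{-1/48}$, we get $|\mathfrak f|g\to 1$ and $F\to 0$.

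At the cusp $1$, conjugate by $TS$, i.e.\ set $\tau=1-1/\tau'$ with $\tau'\to i\infty$. Using $\eta(\tau+1)=e^{\pi i/12}\eta(\tau)$ and the $S$-transformation of $\eta$ at arguments $\tau'/2$, $2\tau'$ and $\tau'$, $g$ becomes, up to absolute constants and powers of $|\tau'|$ that cancel by weight zero, a quotient like $|\eta(2\tau')\eta(\tau'/2)|^{1/3}/|\eta(\tau')|^{1/3}$ with shifted arguments. Concretely, $\eta(\tau/2)$ at $\tau=1-1/\tau'$ becomes $\eta\bigl(\tfrac{\tau'-1}{2\tau'}\bigr)$, which is an $\mathrm{SL}_2$ image of $\eta(\tfrac{\tau'+1}{2})$-type terms. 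Meanwhile $|\mathfrak f|$ tends to $|\mathfrak f_1|$ or $|\mathfrak f_2|$ behaviour, which is $\sim|q'|^{1/24}$ or $\sqrt2|q'|^{-1/24}$-like. I will compute the leading $q'$-exponents and constant to confirm that the maximum of $g$ and $|\mathfrak f|g$ tends to a positive constant, expected to be $\sqrt2^{\,\pm}$-type, whose logarithm exceeds $-0.5$. In particular $F$ extends continuously and is bounded away from $-\infty$ on the compactification.

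\textbf{Step 3: minimum principle and numerics.} On $\mathcal F_\theta^\circ$ both $G_1=\eta(\tau/2)\eta(2\tau)/\eta(\tau)$ and $G_2=\mathfrak f\cdot G_1$ are holomorphic and nonvanishing, since eta has no zeros. Hence $\ln|G_1|$ and $\ln|G_2|$ are harmonic, and $\ln\max(|G_1|,|G_2|)^{1/3}=F$ is subharmonic, which gives a maximum principle but \emph{not} a minimum principle. This is the main obstacle. My fix is to use instead that $\min F\ge \min\ln|G_1|^{1/3}$ and that $\ln|G_1|$ is harmonic, so its minimum is attained on $\partial_{\mathbb P^1(\mathbb C)}\mathcal F_\theta$ including the cusps. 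If that bound alone is too weak (it is likely weak near the cusp $\infty$, where $g\to0$), I will split $\mathcal F_\theta$ into the region $\operatorname{Im}\tau\ge y_1$, where $|\mathfrak f|g$ is controlled by its $q$-expansion with an explicit tail bound, and the compact remainder. On the compact remainder I will use the harmonic functions $\ln|G_1|$ and $\ln|G_2|$ on each subregion, reducing to evaluating $F$ on boundary arcs ($|\tau|=1$, $\operatorname{Re}\tau=\pm1$, and the line $\operatorname{Im}\tau=y_1$), plus the cusp limits from Step 2. Finally I will evaluate these boundary values with SageMath 10.4 on a fine grid, with a Lipschitz bound from the derivative of the eta products, to certify $F>-0.5$.
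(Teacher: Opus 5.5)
The paper gives no argument for this lemma. Presumably it relies on the same reasoning as the earlier lemma with the same label: the maximum modulus principle on $\mathcal F_\theta^\circ$, then a plot on $\partial_{\mathbb P^1(\mathbb C)}\mathcal F_\theta$. You correctly observe that this reasoning only controls the \emph{maximum} of $F=\max(\ln g,\ln(|\mathfrak f|g))$, because $F$ is only subharmonic. Bounding $F$ below by a harmonic minorant on each piece of $\mathcal F_\theta$ is therefore the right route, and it is more careful than the paper's. As written, however, Steps 1--3 contain errors that must be fixed.

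First, $g$ is not of weight zero at $\varepsilon=0$. The numerator exponent is $\tfrac13$, not $\tfrac16$, so $g(S\tau)=|\tau|^{1/3-1/6}g(\tau)=|\tau|^{1/6}g(\tau)$. This is the paper's weight $\tfrac1{6-\varepsilon}$ at $\varepsilon=0$. A quick check comes from Weber's identity $\mathfrak f\mathfrak f_1\mathfrak f_2=\sqrt2$, with $\mathfrak f_1(\tau)=\eta(\tau/2)/\eta(\tau)$ and $\mathfrak f_2(\tau)=\sqrt2\,\eta(2\tau)/\eta(\tau)$. It gives $g=|\eta|^{1/3}|\mathfrak f|^{-1/3}$ and $|\mathfrak f|g=|\eta|^{1/3}|\mathfrak f|^{2/3}$.

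So $F$ is only $T^2$-invariant and is not a function on $\Gamma_\theta\backslash\mathbb H$. That alone is harmless, since the lemma is stated only on $\mathcal F_\theta$. Your analysis at the cusp $1$ is wrong, though. With $\tau=1-1/\tau'$ one has $|\eta(\tau)|=|\tau'|^{1/2}|\eta(\tau')|$ and $|\mathfrak f(\tau)|=|\mathfrak f_2(\tau')|\sim\sqrt2|q'|^{1/24}$. Hence $g\sim 2^{-1/6}|\tau'|^{1/6}\to+\infty$ and $|\mathfrak f|g\to0$. So $F\to+\infty$ at $\pm1$ rather than to a positive constant, and $F$ has no finite continuous extension there. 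Moreover $\ln(|\mathfrak f|g)\to-\infty$ at $\pm1$, so it cannot serve as a minorant on any piece touching $\pm1$.

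Second, in Step 3 ``reducing to evaluating $F$ on boundary arcs'' is exactly what subharmonicity forbids. On each piece you must evaluate the minorant you use on that piece. If you split by $|\mathfrak f|\lessgtr1$, the interior level curve $|\mathfrak f|=1$ is part of the boundary and must be checked. Note also that your ``compact remainder'' is not compact, since it reaches the cusps $\pm1$.

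A clean version runs as follows. With $w=e^{\pi i\tau}$ one has $|\mathfrak f|g=\bigl|\prod_{n\ge1}(1+w^{2n-1})\bigr|\,\bigl|\prod_{n\ge1}(1-w^n)(1-w^{4n})(1-w^{2n})^{-1}\bigr|^{1/3}$. Hence $\ln(|\mathfrak f|g)$ is harmonic on the whole disc $|w|<1$ and vanishes at $w=0$. A tail bound then gives $|\mathfrak f|g$ within about $1\%$ of $1$ for $\operatorname{Im}\tau\ge2$.

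On $\mathcal F_\theta\cap\{\operatorname{Im}\tau\le y_1\}$, use $F\ge\ln g$ alone. This function is harmonic and tends to $+\infty$ at $\pm1$. Its minimum therefore lies on the segment $\operatorname{Im}\tau=y_1$, the sides $\operatorname{Re}\tau=\pm1$, or the arc $|\tau|=1$. Since $\ln g=-\pi y/24+O(e^{-\pi y})$, you need $y_1<12/\pi$. With $y_1=2$, $\ln g$ is about $-0.26$ on the top edge and $\ln g(i)\approx-0.15$, so the bound $-0.5$ holds with room.

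With these corrections your argument proves the lemma, and the reduction to the boundary is then justified, which the paper's implicit argument is not for a minimum.
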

\noindent So $h(\Phi_{\mathfrak f,N})>\frac{\psi(N)}{12}(\ln N-2\lambda_N+C)$ for some constant $C$. So our bound in \autoref{fnew} is very close to the true asymptotic.

\section*{AI disclosure}
In writing this paper, AI was used only for some discussion about the content, and proofreading a final draft. The author estimates the total contribution of AI to this paper to be around 3--5\%.

\printbibliography
\end{document}